\newcommand{\sumprime}{\if@display\sideset{}{'}\sum%
            \else\sum'\fi}
\begin{document}

\numberwithin{equation}{section}

\newtheorem{theorem}{Theorem}[section]
\newtheorem{proposition}[theorem]{Proposition}
\newtheorem{conjecture}[theorem]{Conjecture}
\def\theconjecture{\unskip}
\newtheorem{corollary}[theorem]{Corollary}
\newtheorem{lemma}[theorem]{Lemma}
\newtheorem{observation}[theorem]{Observation}
\newtheorem{definition}{Definition}
\numberwithin{definition}{section} 
\newtheorem{remark}{Remark}
\def\theremark{\unskip}
\newtheorem{question}{Question}
\def\thequestion{\unskip}
\newtheorem{example}{Example}
\def\theexample{\unskip}
\newtheorem{problem}{Problem}

\def\vvv{\ensuremath{\mid\!\mid\!\mid}}
\def\intprod{\mathbin{\lr54}}
\def\reals{{\mathbb R}}
\def\integers{{\mathbb Z}}
\def\N{{\mathbb N}}
\def\complex{{\mathbb C}\/}
\def\dist{\operatorname{dist}\,}
\def\spec{\operatorname{spec}\,}
\def\interior{\operatorname{int}\,}
\def\trace{\operatorname{tr}\,}
\def\cl{\operatorname{cl}\,}
\def\essspec{\operatorname{esspec}\,}
\def\range{\operatorname{\mathcal R}\,}
\def\kernel{\operatorname{\mathcal N}\,}
\def\dom{\operatorname{Dom}\,}
\def\linearspan{\operatorname{span}\,}
\def\lip{\operatorname{Lip}\,}
\def\sgn{\operatorname{sgn}\,}
\def\Z{ {\mathbb Z} }
\def\e{\varepsilon}
\def\p{\partial}
\def\rp{{ ^{-1} }}
\def\Re{\operatorname{Re\,} }
\def\Im{\operatorname{Im\,} }
\def\dbarb{\bar\partial_b}
\def\eps{\varepsilon}
\def\O{\Omega}
\def\Lip{\operatorname{Lip\,}}

\def\Hs{{\mathcal H}}
\def\E{{\mathcal E}}
\def\scriptu{{\mathcal U}}
\def\scriptr{{\mathcal R}}
\def\scripta{{\mathcal A}}
\def\scriptc{{\mathcal C}}
\def\scriptd{{\mathcal D}}
\def\scripti{{\mathcal I}}
\def\scriptk{{\mathcal K}}
\def\scripth{{\mathcal H}}
\def\scriptm{{\mathcal M}}
\def\scriptn{{\mathcal N}}
\def\scripte{{\mathcal E}}
\def\scriptt{{\mathcal T}}
\def\scriptr{{\mathcal R}}
\def\scripts{{\mathcal S}}
\def\scriptb{{\mathcal B}}
\def\scriptf{{\mathcal F}}
\def\scriptg{{\mathcal G}}
\def\scriptl{{\mathcal L}}
\def\scripto{{\mathfrak o}}
\def\scriptv{{\mathcal V}}
\def\frakg{{\mathfrak g}}
\def\frakG{{\mathfrak G}}

\def\ov{\overline}

\thanks{Supported by NSF grant 11771089}

\address{School of Mathematical Sciences, Fudan University, Shanghai, 200433, China}
 \email{boychen@fudan.edu.cn}

\title{Hardy-Sobolev type inequalities and their applications}
\author{Bo-Yong Chen}
\date{}
\maketitle

\bigskip

\begin{abstract}
This paper is devoted to various applications of Hardy-Sobolev type inequalities. We derive a new $L^2$ estimate for the $\bar{\partial}-$equation on ${\mathbb C}^n$ which yields a quantitative generalization of the Hartogs extension theorem to the case when the singularity set is not necessary compact. A $\partial\bar{\partial}-$proof of the Hartogs extension theorem  for pluriharmonic functions is also given. We show that for any negative subharmonic function $\psi$ on ${\mathbb R}^n$, $n>2$, the BMO norm of $\log |\psi|$ is bounded above by $2\sqrt{n-2}$ and $|\psi|^\gamma$ satisfies a reverse H\"older inequality for every $0<\gamma<1$. We also show that  every plurisubharmonic function is locally BMO. Several Liouville theorems for subharmonic functions on complete Riemannian manifolds are given. As a consequence, we get a Margulis type theorem that if a bounded domain in ${\mathbb C}^n$ covers a Zariski open set in a projective algebraic variety, then the group of deck transformations of the covering has trivial center.
\end{abstract}

\section{Introduction}

Many problems in analysis can be reduced to integral inequalities. This paper is devoted to the inequality
 \begin{equation}\label{eq:Hardy-Sobolev_1}
 \left[\int_{{\mathbb R}^n} |\phi|^\alpha \omega\right]^{1/\alpha}\le \left[\int_{{\mathbb R}^n} |\nabla \phi|^2 \omega'\right]^{1/2},\ \ \ \phi\in C_0^\infty({\mathbb R}^n)
 \end{equation}
 where $\omega,\omega'$ are weighted functions, i.e. measurable and positive a.e. on ${\mathbb R}^n$.
  If $\alpha=2$, $\omega=\frac{(n-2)^2}4|x|^{-2}$ and $\omega'=1$, then we have Hardy's inequality
 \begin{equation}\label{eq:Hardy-Sobolev_2}
 \frac{(n-2)^2}4\int_{{\mathbb R}^n} |\phi|^2 |x|^{-2} \le  \int_{{\mathbb R}^n} |\nabla \phi|^2.
 \end{equation}
 If $\alpha=\frac{2n}{n-2}$, $\omega=1$ and $\omega'=C_n\gg 1$, then we have Sobolev's inequality
  \begin{equation}\label{eq:Hardy-Sobolev_3}
 \left[\int_{{\mathbb R}^n} |\phi|^{\frac{2n}{n-2}} \right]^{\frac{n-2}{n}}\le C_n \int_{{\mathbb R}^n} |\nabla \phi|^2.
 \end{equation}
Thus it is reasonable to call (\ref{eq:Hardy-Sobolev_1}) a\/ {\it Hardy-Sobolev\/} type inequality. A large literature exists for Hardy-Sobolev type inequalities and their applications (see \cite{Mazja}, \cite{OpicKufner}).

We begin with a new application of (\ref{eq:Hardy-Sobolev_1}) in function theory of several complex variables.
 Let $L^2_{(p,q)}({\mathbb C}^n)$ denote the space of $(p,q)$ forms $u$ in ${\mathbb C}^n$,
$$
u={\sum}'_{|I|=p}{\sum}'_{|J|=q} u_{I,J} dz^I\wedge d\bar{z}^J
$$
where $u_{I,J}\in L^2({\mathbb C}^n)$ and $\sum'$ means that the summation is preformed only over strictly increasing multi-indices. For $u,v\in L^2_{(p,q)}({\mathbb C}^n)$ we set
$$
(u,v)={\sum}'_{I,J}\int_{{\mathbb C}^n} u_{I,J}\cdot \overline{v_{I,J}} \ \ \ {\rm and\ \ \ } \|u\|_2=\sqrt{(u,u)}.
$$
For a weight function $\omega$ and $\alpha\ge 1$, we define
$$
\|u\|^2_{\omega,\alpha}  = {\sum}'_{I,J}  \left[\int_{{\mathbb C}^{n}}  |u_{I,J}|^{\alpha}\omega\right]^{2/\alpha}.
$$

\begin{theorem}\label{th:L2Estimate}
Suppose we have an inequality
\begin{equation}\label{eq:Hardy-Sobolev_4}
 \left[\int_{{\mathbb C}^{n}}|\phi|^{\alpha} \omega\right]^{1/\alpha}\le \left[\int_{{\mathbb C}^{n}} |\nabla \phi|^2\right]^{1/2}
\end{equation}
for all real-valued functions $\phi\in C^\infty_0({\mathbb C}^{n})$. Then for any $\bar{\partial}-$closed form $v\in L^2_{(p,q)}({\mathbb C}^n)$ with
    $
 \|v\|_{\omega^{-\alpha'/\alpha},\alpha'}<\infty,
  $
  where $\alpha'$ is the dual exponent of $\alpha$, i.e. $\frac1{\alpha}+\frac1{\alpha'}=1$,
  there exists a solution to $\bar{\partial} u =v$ which satisfies the estimate
   $$
   \|u\|_2 \le  2\sqrt{2}\, \|v\|_{\omega^{-\alpha'/\alpha},\alpha'}.
   $$
   Moreover, if $n>1$, $p=0$, $q=1$ and\/ ${\mathbb C}^n\backslash {\rm supp\,}v$ contains a holomorphic cylinder, then $u=0$ on the unbounded component of\/ ${\mathbb C}^n\backslash {\rm supp\,}v$.
\end{theorem}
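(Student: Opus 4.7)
The plan is a standard H\"ormander-style $L^2$-duality argument in which the Hardy--Sobolev hypothesis (\ref{eq:Hardy-Sobolev_4}) replaces the usual weighted Bochner estimate. Throughout I test against a smooth compactly supported $(p,q)$-form $\beta={\sum}'_{I,J}\beta_{I,J}\,dz^I\wedge d\bar z^J$, aiming to bound $|(v,\beta)|$ in terms of $\|\dbarstar\beta\|_2$ alone; the solution $u$ is then produced by Hahn--Banach and the Riesz representation theorem, and its norm is governed by the constant in that bound.

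The first step is the a priori estimate
$$
\|\beta\|_{\omega,\alpha}^2 \;\leq\; 8\bigl(\|\dbar\beta\|_2^2+\|\dbarstar\beta\|_2^2\bigr).
$$
To obtain it, I apply (\ref{eq:Hardy-Sobolev_4}) to $\Re\beta_{I,J}$ and $\Im\beta_{I,J}$ separately; using $|\nabla\beta_{I,J}|^2=|\nabla\Re\beta_{I,J}|^2+|\nabla\Im\beta_{I,J}|^2$ together with $(a+b)^2\leq 2(a^2+b^2)$ gives $\|\beta_{I,J}\|_{L^\alpha(\omega)}^2\leq 2\int|\nabla\beta_{I,J}|^2$. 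Integration by parts on $\C^n$ yields the flat identity $\int|\nabla f|^2=4\sum_k\int|\partial f/\partial\bar z_k|^2$, and combining with the Kohn--Morrey identity on $\C^n$,
$$
\|\dbar\beta\|_2^2+\|\dbarstar\beta\|_2^2 \;=\; {\sum}'_{I,J}\sum_{k=1}^n \int_{\C^n}\left|\frac{\partial\beta_{I,J}}{\partial\bar z_k}\right|^2,
$$
produces the displayed bound after summation in $(I,J)$. Two applications of H\"older's inequality (first on each integral $\int v_{I,J}\overline{\beta_{I,J}}$ with the splitting $\omega^{-1/\alpha}\cdot\omega^{1/\alpha}$, then by Cauchy--Schwarz over $(I,J)$) give
$$
|(v,\beta)| \;\leq\; \|v\|_{\omega^{-\alpha'/\alpha},\alpha'}\,\|\beta\|_{\omega,\alpha}\;\leq\;2\sqrt 2\,\|v\|_{\omega^{-\alpha'/\alpha},\alpha'}\bigl(\|\dbar\beta\|_2^2+\|\dbarstar\beta\|_2^2\bigr)^{1/2}.
$$

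To pass from this full graph-norm estimate to one in $\|\dbarstar\beta\|_2$ alone I would invoke H\"ormander's orthogonal decomposition $\beta=\beta'+\beta''$ with $\beta'\in\ker\dbar$ and $\beta''\in(\ker\dbar)^\perp$. Since $\dbar\gamma\in\ker\dbar$ for every $\gamma$, the relation $(\beta'',\dbar\gamma)=0$ shows $\beta'\in\mathrm{Dom}(\dbarstar)$ with $\dbarstar\beta'=\dbarstar\beta$; the hypothesis $\dbar v=0$ forces $v\perp(\ker\dbar)^\perp$, so $(v,\beta)=(v,\beta')$. Applying the previous inequality to $\beta'$, which satisfies $\dbar\beta'=0$, yields
$$
|(v,\beta)|\;\leq\;2\sqrt 2\,\|v\|_{\omega^{-\alpha'/\alpha},\alpha'}\,\|\dbarstar\beta\|_2,
$$
and the Riesz representation theorem then furnishes $u\in L^2_{(p,q-1)}(\C^n)$ with $\|u\|_2\leq 2\sqrt 2\,\|v\|_{\omega^{-\alpha'/\alpha},\alpha'}$ and $\dbar u=v$. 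This duality reduction is the step I expect to require the most care, since the Hardy--Sobolev estimate by itself only controls the full graph norm.

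For the final assertion, $\dbar u=v=0$ off $\mathrm{supp}\,v$, so the restriction of $u$ to the unbounded component $U$ of $\C^n\setminus\mathrm{supp}\,v$ is holomorphic. If $U$ contains a holomorphic cylinder, say $A\times\C$ with $A\subset\C^{n-1}$ open, then by Fubini $u(z',\cdot)\in L^2(\C)$ for almost every $z'\in A$; as the only entire function in $L^2(\C)$ is zero, $u$ vanishes on a full-measure subset of $A\times\C$. Continuity of the holomorphic $u$ promotes this to $u\equiv 0$ on the nonempty open set $A\times\C\subset U$, and the identity theorem on the connected open set $U$ then gives $u\equiv 0$ throughout $U$.
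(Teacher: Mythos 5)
Your overall route is the same as the paper's: the identical basic estimate $\|\beta\|_{\omega,\alpha}^2\le 8(\|\bar{\partial}\beta\|_2^2+\|\bar{\partial}^*\beta\|_2^2)$ obtained by applying the Hardy--Sobolev hypothesis to $\Re\beta_{I,J}$ and $\Im\beta_{I,J}$ and converting $\int|\nabla\beta_{I,J}|^2$ into the $\bar{\partial}$-graph norm (the paper does this via $\Box u=-\frac14\sum'\Delta u_{I,J}\,dz^I\wedge d\bar z^J$, you via the flat Kohn--Morrey identity --- the same computation), then the H\"ormander duality with the orthogonal decomposition relative to $\ker\bar{\partial}$, and finally Fubini plus the fact that the only $L^2$ entire function on ${\mathbb C}$ is $0$. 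There is, however, one genuine gap in the duality step: your a priori estimate is established only for \emph{smooth, compactly supported} $\beta$, but you then apply it to the projection $\beta'$ of $\beta$ onto $\ker\bar{\partial}$, which is in general neither smooth nor compactly supported (it only lies in ${\rm Dom}\,\bar{\partial}\cap{\rm Dom}\,\bar{\partial}^*$). As written the inequality you invoke for $\beta'$ has not been proved. This is exactly where the paper inserts the Andreotti--Vesentini/H\"ormander density lemma: since the Euclidean metric is a complete K\"ahler metric on ${\mathbb C}^n$, $D_{(p,q)}$ is dense in ${\rm Dom}\,\bar{\partial}\cap{\rm Dom}\,\bar{\partial}^*$ for the graph norm, and the basic inequality passes to the limit because $\|\cdot\|_{\omega,\alpha}$ is lower semicontinuous under $L^2$ convergence (Fatou along an a.e.\ convergent subsequence). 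You flagged the duality reduction as delicate, but the issue you need to address is this extension of the estimate, not the decomposition itself; once the estimate holds on all of ${\rm Dom}\,\bar{\partial}\cap{\rm Dom}\,\bar{\partial}^*$, your argument (or equivalently the paper's formulation, which tests against $w\in\ker\bar{\partial}$ directly) goes through verbatim.

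A smaller point: in the final step you take the cylinder to be a coordinate product $A\times{\mathbb C}$, but the paper's hypothesis only provides a biholomorphic image $F({\mathbb C}\times{\mathbb D}^{n-1})$. The fix is routine and is what the paper does: pull back by $F$ and apply Fubini to $h=(u\circ F)\,{\rm det}\,F'$, which is $L^2$ on ${\mathbb C}\times{\mathbb D}^{n-1}$ by the change-of-variables formula; then $h(\cdot,z')\in L^2({\mathbb C})$ for a.e.\ $z'$ forces $h\equiv 0$, hence $u\equiv 0$ on the cylinder, and the identity theorem on the unbounded component (which contains the cylinder, since an injective holomorphic image of ${\mathbb C}$ is unbounded) finishes the argument as you say.
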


By a holomorphic cylinder we mean a biholomorphic image of ${\mathbb C}\times {\mathbb D}^{n-1}$ where ${\mathbb D}$ is the unit disc in ${\mathbb C}$. As a consequence, we obtain a quantitative generalization of the famous Hartogs extension theorem.

\begin{corollary}\label{Cor:Hartogs}
Let $\Omega$ be a domain in ${\mathbb C}^n$, $n>1$. Let $E$ be a closed set in $\Omega$ which satisfies the following properties:
\begin{enumerate}
\item $\Omega\backslash E$ is connected$\,;$
\item there exists $r>0$ such that $
E_{r}:=\{z\in {\mathbb C}^n:d(z,E)\le r\}\subset \Omega;
$
\item ${\mathbb C}^n\backslash E_r$ contains a holomorphic cylinder.
\end{enumerate}
 If $f$ is a holomorphic function on $\Omega\backslash E$  which satisfies either
 $$
f\in L^2\cap  L^{2n/(n+1)}(E_r\backslash E_{r/2})
$$
 or
 $$
\int_{E_r\backslash E_{r/2}} |f|^2 |z|^2<\infty,
$$
then there is a holomorphic function $F$ on $\Omega$ such that $F|_{\Omega\backslash E}=f$.
\end{corollary}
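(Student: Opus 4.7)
The plan is to convert the extension problem into a $\bar\partial$-problem with right-hand side supported in a thin shell around $E$, solve it with the $L^2$ estimate of Theorem~\ref{th:L2Estimate}, and invoke the ``moreover'' clause of that theorem together with the identity principle to recover $f$.

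First I would fix a cutoff function $\chi\in C^\infty(\mathbb{C}^n)$ with $\chi\equiv 1$ on a neighborhood of $E_{r/2}$, $\operatorname{supp}\chi\subset E_r$, and $|\bar\partial \chi|\le C/r$. Set $\tilde f=(1-\chi)f$ on $\Omega\setminus E$ and $\tilde f\equiv 0$ on $E_{r/2}$; these pieces glue to a smooth function on $\Omega$, since $1-\chi$ vanishes in a neighborhood of $E$. Let $v=\bar\partial\tilde f=-(\bar\partial\chi)f$, which is a smooth, $\bar\partial$-closed $(0,1)$-form supported in the shell $E_r\setminus E_{r/2}\subset\Omega$; extend $v$ by zero to all of $\mathbb{C}^n$.

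Next I would verify the hypotheses of Theorem~\ref{th:L2Estimate} in each scenario by using the two Hardy--Sobolev inequalities available on $\mathbb{R}^{2n}=\mathbb{C}^n$. For the first integrability condition I would invoke the Sobolev inequality (\ref{eq:Hardy-Sobolev_3}) with $\alpha=2n/(n-1)$ and constant weight, whose dual exponent is $\alpha'=2n/(n+1)$; the bound $|\bar\partial\chi|\le C/r$ then turns the assumption $f\in L^2\cap L^{2n/(n+1)}(E_r\setminus E_{r/2})$ into $v\in L^2_{(0,1)}(\mathbb{C}^n)$ together with $\|v\|_{\mathrm{const},\alpha'}<\infty$. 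For the second integrability condition I would use the Hardy inequality (\ref{eq:Hardy-Sobolev_2}) with $\alpha=\alpha'=2$ and $\omega=(n-1)^2|z|^{-2}$, so that $\omega^{-\alpha'/\alpha}$ is a constant multiple of $|z|^2$; the hypothesis $\int_{E_r\setminus E_{r/2}}|f|^2|z|^2<\infty$ supplies the required weighted norm (and gives $v\in L^2$ since $|z|$ is bounded below on $\operatorname{supp} v$, e.g.\ by $r/2$ whenever $0\in E$).

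Apply Theorem~\ref{th:L2Estimate} to obtain $u\in L^2(\mathbb{C}^n)$ with $\bar\partial u=v$. By assumption (3), $\mathbb{C}^n\setminus E_r\supset\mathbb{C}^n\setminus\operatorname{supp} v$ contains a holomorphic cylinder, so the ``moreover'' clause yields $u\equiv 0$ on the unbounded component $U$ of $\mathbb{C}^n\setminus\operatorname{supp} v$. Define $F=\tilde f-u$ on $\Omega$; then $\bar\partial F=v-v=0$, so $F$ is holomorphic on $\Omega$. On the nonempty open set $U\cap(\Omega\setminus E_r)$ one has $\chi=0$ and $u=0$, hence $F=f$ there; by assumption (1), $\Omega\setminus E$ is connected, so the identity principle propagates $F=f$ to all of $\Omega\setminus E$, giving the desired extension.

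The step I expect to demand the most care is the application of the ``moreover'' clause: it is precisely there that assumption (3) is essential, providing the open region on which the $L^2$-solution $u$ vanishes identically, without which one could not bridge the gap from an $L^2$ correction to an actual holomorphic extension.
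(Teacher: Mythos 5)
Your proposal is correct and follows essentially the same route as the paper: the paper proves the corollary via Theorem \ref{th:Hartogs} (cutoff $\chi(d(\cdot,E)/r)$, $\bar\partial$-closed form supported in $E_r\setminus E_{r/2}$, Theorem \ref{th:L2Estimate} plus its vanishing clause, then $F=\chi f-u$ and connectedness of $\Omega\setminus E$), with the two hypotheses handled exactly by your choices of weight — Sobolev with $\alpha=2n/(n-1)$, $\alpha'=2n/(n+1)$ and Hardy with $\omega=(n-1)^2|z|^{-2}$ — which correspond to the paper's Corollaries \ref{Cor:L2Estimate_3} and \ref{Cor:L2Estimate_2}. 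The only point deserving slightly more care than your parenthetical is checking $v\in L^2$ in the second case when $0$ lies in the shell (use that $f$ is holomorphic, hence bounded, near such a point), a detail the paper also passes over.
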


By solving the $\partial\bar{\partial}-$equation instead of the $\bar{\partial}-$equation, we may prove similarly the Hartogs extension theorem for pluriharmonic functions\footnote{The author is unable to find a proof in the literature.}.

\begin{theorem}\label{Thm:Hartogs_PH}
Let $\Omega$ be a domain in ${\mathbb C}^n$, $n>1$. Let $E$ be a compact set in $\Omega$ such that  $\Omega\backslash E$ is connected.
 If $f$ is a pluriharmonic function on $\Omega\backslash E$,  then there is a pluriharmonic function $F$ on $\Omega$ such that $F|_{\Omega\backslash E}=f$.
\end{theorem}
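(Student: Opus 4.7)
The plan is to imitate the classical cutoff-and-solve proof of the Hartogs extension theorem, but with $\partial\bar\partial$ in place of $\bar\partial$. First, I would fix a cutoff $\chi\in C_0^\infty(\Omega)$ with $\chi\equiv 1$ on a neighborhood of $E$ and form $F_0:=(1-\chi)f$, which is a smooth real function on $\Omega$ (set to $0$ where $\chi\equiv 1$, in particular on $E$). Because $f$ is pluriharmonic, $g:=\partial\bar\partial F_0$ is a smooth $(1,1)$-form compactly supported in $\mathrm{supp}(\nabla\chi)\Subset\Omega\setminus E$, $\partial$- and $\bar\partial$-closed, and satisfies $\bar g=-g$. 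Extending $g$ by zero to $\mathbb{C}^n$, the task reduces to finding a real, compactly supported function $u$ on $\mathbb{C}^n$ with $\partial\bar\partial u=g$: then $F:=F_0-u$ will be real pluriharmonic on $\Omega$.

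To solve $\partial\bar\partial u=g$ I would invoke the compactly supported $\bar\partial$-equation on $\mathbb{C}^n$ twice, both applications valid because $n>1$ (obtainable either from Theorem~\ref{th:L2Estimate} with a suitable weight, or classically). First, find a compactly supported $(1,0)$-form $v$ with $\bar\partial v=g$. The key observation is that $\partial v$ is then a compactly supported $(2,0)$-form with $\bar\partial(\partial v)=-\partial g=0$, so its coefficients are entire and compactly supported, forcing $\partial v\equiv 0$; equivalently, $\bar v$ is a compactly supported $\bar\partial$-closed $(0,1)$-form. A second application produces a compactly supported $w$ with $\bar\partial w=\bar v$, so that $v=\partial\bar w$; chasing signs through $\partial\bar\partial=-\bar\partial\partial$ shows $u_0:=-\bar w$ satisfies $\partial\bar\partial u_0=g$. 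Reality follows from a conjugation trick: the identity $\overline{\partial\bar\partial u_0}=-\partial\bar\partial\overline{u_0}$ together with $\bar g=-g$ gives also $\partial\bar\partial\overline{u_0}=g$, whence $\mathrm{Im}\,u_0$ is a compactly supported pluriharmonic function and so vanishes by the maximum principle; set $u:=u_0$.

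With $u$ in hand, $F:=F_0-u$ is a smooth real pluriharmonic function on $\Omega$. Let $U_\infty$ be the unbounded connected component of $\mathbb{C}^n\setminus\mathrm{supp}(\chi)$: there $\chi\equiv 0$ (so $F_0=f$) and $u$ is pluriharmonic with compact support, hence identically zero by unique continuation of harmonic functions. Therefore $F=f$ on the non-empty open set $U_\infty\cap\Omega\subset\Omega\setminus E$, and the identity principle for harmonic functions, combined with connectedness of $\Omega\setminus E$, promotes this to $F=f$ throughout $\Omega\setminus E$. The central technical point is the support-theoretic observation $\partial v\equiv 0$: this is precisely what allows two $\bar\partial$-inversions to compose into a $\partial\bar\partial$-potential, and it is the analogue in this setting of the compactness-of-support step in the classical Hartogs proof.
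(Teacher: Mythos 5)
Your argument is correct, and the overall scaffolding (cutoff, compactly supported $d$-closed $(1,1)$-form, solve $\partial\bar\partial u=g$ with control on where $u$ vanishes, then real-analytic unique continuation on the connected set $\Omega\backslash E$) matches the paper's proof of Theorem \ref{Thm:Hartogs_PH}. Where you genuinely diverge is in producing the potential $u$: the paper follows Lelong's method (Lemma \ref{lm:Lelong}), solving the Poisson equation $\Delta u=4\,{\rm Trace}(v)$ variationally via the Sobolev inequality, proving decay of $u$ and its second derivatives at infinity, and then using subharmonicity of $|\partial\bar\partial u-v|^2$ plus the maximum principle to conclude $\partial\bar\partial u=v$, with the vanishing of $u$ on the unbounded component obtained through a holomorphic-cylinder/maximum-principle argument; you instead compose two compactly supported $\bar\partial$-inversions (i.e.\ $H^{p,1}_c(\mathbb C^n)=0$ for $n\ge 2$), with the pivot that $\partial v$ is a compactly supported $\bar\partial$-closed $(2,0)$-form and hence zero, and a conjugation trick for reality. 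Your route is shorter and yields a \emph{compactly supported} $u$ outright, so the vanishing on the unbounded component is immediate from real-analyticity, with no decay estimates needed; on the other hand it rests on the classical compactly supported $\bar\partial$-theory rather than the Hardy--Sobolev machinery the paper is advertising (note that Theorem \ref{th:L2Estimate} as stated gives the vanishing conclusion only for $p=0,q=1$, so for the first inversion you should cite the classical result, or extend the cylinder argument to $(1,0)$-solutions, which is routine since $\bar\partial$-closed $(1,0)$-forms have holomorphic coefficients), and unlike the Poisson-equation route it does not obviously adapt to singularity sets that are not compact. Two small points you share with the paper and should spell out: the non-emptiness of $U_\infty\cap\Omega$ when $\Omega$ is bounded needs a line (e.g.\ points of $\Omega$ near a boundary point of maximal modulus lie in $U_\infty$), and the final identity-principle step should be phrased via real-analyticity of pluriharmonic functions on the connected open set $\Omega\backslash E$.
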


 The special case when $\alpha=2$ in (\ref{eq:Hardy-Sobolev_1}) is of great importance, since it links to the (spectral) theory of Schr\"odinger operators. Actually there is a question posed by Fefferman \cite{Fefferman} on when the following inequality is true
 \begin{equation}\label{eq:Hardy-Sobolev_5}
 \int_{{\mathbb R}^n} |\phi|^2 \omega \le C \int_{{\mathbb R}^n} |\nabla \phi|^2.
 \end{equation}
 In \cite{Fefferman}, a sufficient condition is given as follows
 \begin{equation}\label{eq:FeffermanCondition}
  \left[\frac1{|B|}\int_B |\omega|^\alpha\right]^{1/\alpha}\le C |B|^{-2/n}
 \end{equation}
 for all balls $B\subset {\mathbb R}^n$ and some $\alpha>1$, $C>0$ (see also \cite{Chang} for further results).

 In this paper, we give two related inequalities. Let $M$ be a Riemannian manifold with a metric given locally by $ds^2=g_{ij}dx^i dx^j$ where the summation is on the repeated indices. Let $(g^{ij})=(g_{ij})^{-1}$.  For a function $\phi$, the gradient $\nabla$ acts on $\phi$ by $(\nabla \phi)^i = g^{ij}\partial \phi/\partial x^j$. The Laplace operator associated to $ds^2$ is defined by
 $$
 \Delta =\frac1{\sqrt{{\rm det}(g_{ij})}}\frac{\partial}{\partial x^i}\left[\sqrt{{\rm det}(g_{ij})} g^{ij}\frac{\partial}{\partial x^j}\right].
 $$
  Let $d\mu=\sqrt{{\rm det}(g_{ij})} dx^1\cdots dx^n$ denote the Riemannian volume element.

  \begin{proposition}\label{prop:Integ_Ineq_mfd}
   Let $\psi$ be a $C^2$ function on $M$. Let $\eta:{\mathbb R}\rightarrow (0,\infty)$ be a $C^1$ function with $\eta'>0$. Suppose either $\phi\in C^1_0(M)$ or $\psi\in C^2_0(M)$. Then we have
    \begin{equation}\label{eq:Laplace-}
   \int_M {|\phi|^2}\left[\frac{2\Delta\psi}{\eta (-\psi)} + \frac{\eta'(-\psi)}{\eta^2(-\psi)}{|\nabla\psi|^2}\right] d\mu\le 4 \int_M \frac{|\nabla\phi|^2}{\eta'(-\psi)} d\mu
  \end{equation}
   \begin{equation}\label{eq:Laplace+}
   \int_M |\phi|^2 \left[ 2\eta (\psi)\Delta\psi + \eta'(\psi){|\nabla\psi|^2}\right] d\mu\le 4 \int_M \frac{\eta^2(\psi)}{\eta'(\psi)} |\nabla\phi|^2 d\mu.
  \end{equation}
 \end{proposition}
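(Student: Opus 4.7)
My plan is to prove both (\ref{eq:Laplace-}) and (\ref{eq:Laplace+}) by the same two-step mechanism: a divergence identity (integration by parts against $|\phi|^2$), followed by a weighted Cauchy-Schwarz inequality with a parameter tuned so that the $|\phi|^2$-error term produced can be absorbed into the left-hand side.

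For (\ref{eq:Laplace-}), I first compute, by the chain rule,
\begin{equation*}
\nabla\cdot\left(\frac{2\nabla\psi}{\eta(-\psi)}\right) \;=\; \frac{2\Delta\psi}{\eta(-\psi)} \;+\; \frac{2\eta'(-\psi)}{\eta^2(-\psi)}|\nabla\psi|^2,
\end{equation*}
observing that the coefficient of $|\nabla\psi|^2$ here is \emph{twice} what appears in (\ref{eq:Laplace-}); this extra copy is precisely what will be cancelled in the next step. Multiplying by $|\phi|^2$ and integrating over $M$, the divergence theorem transfers a derivative onto $|\phi|^2$ and converts the integral into $-4\int_M\phi\,\nabla\phi\cdot\nabla\psi/\eta(-\psi)\,d\mu$; boundary terms vanish in either of the two compact-support cases, since the vector field $|\phi|^2\cdot 2\nabla\psi/\eta(-\psi)$ is then compactly supported in $M$. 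I then split the resulting cross-term via $2|a||b|\le\epsilon a^2+b^2/\epsilon$ with $\epsilon=\eta'(-\psi)/2$; the choice of $\epsilon$ is forced by the requirement that the $a^2$-contribution match the extra copy of $\eta'(-\psi)|\nabla\psi|^2/\eta^2(-\psi)$ that needs to be absorbed, and the $b^2$-contribution then yields exactly $4|\nabla\phi|^2/\eta'(-\psi)$.

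Inequality (\ref{eq:Laplace+}) is proved by running the same three moves with the vector field $2\eta(\psi)\nabla\psi$, whose divergence is $2\eta(\psi)\Delta\psi+2\eta'(\psi)|\nabla\psi|^2$, and with the matching parameter $\epsilon=\eta'(\psi)/(2\eta^2(\psi))$. I do not expect any genuine obstacle here: once the correct divergence identity is written down, the proof is pure algebra, and both the universal constant $4$ on the right and the specific $2{:}1$ ratio between the $\Delta\psi$ and $|\nabla\psi|^2$ coefficients on the left are pinned down simultaneously by the single free parameter $\epsilon$. The two compact-support alternatives in the hypothesis are included precisely so that the lone integration by parts is legal.
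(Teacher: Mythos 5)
Your proposal is correct and follows essentially the same route as the paper: the divergence identity you write is just the paper's application of Green's formula to $\nabla\psi$ against $\phi^2/\eta(-\psi)$ (resp. $\phi^2\eta(\psi)$), and the weighted Cauchy--Schwarz with absorption of one copy of the $\eta'|\nabla\psi|^2$-term reproduces the paper's estimate of the cross term and the constant $4$. No gaps worth noting.
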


Although Proposition \ref{prop:Integ_Ineq_mfd} is only a fairly straightforward consequence of Green's formula, it has surprising applications in the theory of subharmonic functions and plurisubharmonic (psh) functions. Throughout this paper, we assume that subharmonic functions are not identically $-\infty$. It is well-known that any subharmonic function $\psi$ on a domain in ${\mathbb R}^n$ is $L^1_{\rm loc}$.  Actually one has $\psi\in L^\alpha_{\rm loc}$ for any $\alpha<\frac{n}{n-2}$ (cf. \cite{HormanderConvexity}, Theorem 3.2.13).

 \begin{theorem}\label{th:BMO_Subharmonic}
 If $\psi$ is a negative subharmonic function on ${\mathbb R}^n$, $n>2$, then
 \begin{enumerate}
 \item $\log |\psi|\in {\rm BMO}({\mathbb R}^n)$ and its norm satisfies
  \begin{equation}\label{eq:BMO_Bound}
 \|\log |\psi|\|_{\rm BMO}< 2\sqrt{n-2};
 \end{equation}
 \item the following reverse H\"older inequality holds: for any\/ $0<\gamma<1$, one has
 \begin{equation}\label{eq:StrongHolder}
 \left[\frac1{|B|}\int_B |\psi|^{\frac{\gamma n}{n-2}}\right]^{\frac{n-2}n}\le  \frac{C_{n}}{(1-\gamma)^2}\frac{1}{|B|}\int_{B} |\psi|^\gamma
 \end{equation}
  for all balls $B\subset {\mathbb R}^n$. Here  $C_n>0$ is a constant depending only on $n$.
 \end{enumerate}
 \end{theorem}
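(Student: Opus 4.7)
For part (1), I apply Proposition~\ref{prop:Integ_Ineq_mfd} on $\R^n$ (Euclidean metric) with $\eta(t)=t$. Since $\psi<0$ and $\Delta\psi\ge 0$, both bracketed terms in \eqref{eq:Laplace-} are nonnegative, and dropping the $2\Delta\psi/(-\psi)$ term leaves the key Hardy-type inequality
$$\int_{\R^n}\phi^2\,|\nabla\log|\psi||^2\,dx\;\le\;4\int_{\R^n}|\nabla\phi|^2\,dx\qquad(\phi\in C_0^1(\R^n)).$$
I then insert (a compactly supported $C^1$ truncation of) the capacitary potential $\phi(x)=\min\bigl(1,(r/|x-x_0|)^{n-2}\bigr)$ of the ball $B=B(x_0,r)$. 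A direct calculation gives $\int|\nabla\phi|^2=n(n-2)|B_1|\,r^{n-2}$, which furnishes the Morrey/Carleson estimate
$$\int_{B(x_0,r)}|\nabla\log|\psi||^2\,dx\;\le\;4n(n-2)|B_1|\,r^{n-2}.$$

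Setting $u=\log|\psi|$ and combining this with the sharp Neumann--Poincar\'e inequality $\int_B(u-u_B)^2\le (r^2/\mu_1)\int_B|\nabla u|^2$, where $\mu_1$ is the first positive Neumann eigenvalue of the unit ball in $\R^n$, then with Cauchy--Schwarz, yields
$$\frac{1}{|B|}\int_B|u-u_B|\,dx\;\le\;\Bigl(\frac{1}{|B|}\int_B(u-u_B)^2\,dx\Bigr)^{1/2}\;\le\;\frac{2\sqrt{n(n-2)}}{\sqrt{\mu_1}}.$$
The strict inequality $\mu_1>n$, readily checked from the identity $\mu_1=(p'_{n/2,1})^2$ and standard bounds on zeros of $J'_{n/2}$, then produces \eqref{eq:BMO_Bound}.

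For part (2), apply Proposition~\ref{prop:Integ_Ineq_mfd} with $\eta(t)=t^{1-\gamma}$; compute $\eta'(t)/\eta^2(t)=(1-\gamma)t^{\gamma-2}$ and $1/\eta'(t)=t^\gamma/(1-\gamma)$, and discard the nonnegative $\Delta\psi$-term in \eqref{eq:Laplace-} to obtain
$$\int\phi^2\,\frac{|\nabla\psi|^2}{|\psi|^{2-\gamma}}\,dx\;\le\;\frac{4}{(1-\gamma)^2}\int|\nabla\phi|^2\,|\psi|^\gamma\,dx.$$
Set $w=|\psi|^{\gamma/2}$, so that $|\nabla w|^2=(\gamma/2)^2|\psi|^{\gamma-2}|\nabla\psi|^2$ and $w^2=|\psi|^\gamma$. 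The elementary bound $|\nabla(\phi w)|^2\le 2\phi^2|\nabla w|^2+2w^2|\nabla\phi|^2$, combined with the previous display, rearranges to
$$\int|\nabla(\phi w)|^2\,dx\;\le\;\frac{2}{(1-\gamma)^2}\int|\nabla\phi|^2\,w^2\,dx.$$
Feeding this into the Sobolev inequality $\|\phi w\|_{L^{2n/(n-2)}}^2\le C_n\int|\nabla(\phi w)|^2\,dx$, with $\phi$ the standard cutoff equal to $1$ on $B(x_0,r/2)$, supported in $B(x_0,r)$, and satisfying $|\nabla\phi|\le 2/r$, produces a reverse-H\"older inequality
$$\Bigl(\frac{1}{|B(x_0,r/2)|}\int_{B(x_0,r/2)}|\psi|^{\gamma n/(n-2)}\Bigr)^{(n-2)/n}\;\le\;\frac{C_n'}{(1-\gamma)^2}\,\frac{1}{|B(x_0,r)|}\int_{B(x_0,r)}|\psi|^\gamma,$$
from which the same-ball version \eqref{eq:StrongHolder} follows by a standard doubling/covering argument that absorbs the ball-ratio factor into $C_n$.

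The main technical obstacle is that $\psi$ need not be smooth: $\Delta\psi$ is only a positive Radon measure and $|\nabla\psi|^2/|\psi|^{2-\gamma}$ is not a priori locally integrable. I would address this by first working with the mollification $\psi_\varepsilon=\psi\ast\rho_\varepsilon$---smooth, negative, and subharmonic on every ball where it is defined, with $\psi_\varepsilon\downarrow\psi$ as $\varepsilon\to 0^+$ for radial $\rho_\varepsilon$---proving the displayed inequalities for $\psi_\varepsilon$, and passing to the limit by monotone/dominated convergence. The other delicate ingredient is the precise constant $2\sqrt{n-2}$ in \eqref{eq:BMO_Bound}: a generic Lipschitz cutoff would produce only a bound of the shape $C\sqrt{n(n-2)}$ with a dimension-dependent $C$, so using the capacitary potential as test function and invoking the sharp Neumann eigenvalue of the ball are both essential.
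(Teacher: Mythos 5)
Your overall route coincides with the paper's: the weighted inequalities of Proposition~\ref{prop:Integ_Ineq_mfd} with $\eta(t)=t$ resp.\ $\eta(t)=t^{1-\gamma}$, the capacity of the ball as the Dirichlet energy of the test function, the sharp Neumann--Poincar\'e inequality plus Cauchy--Schwarz for (1), and the substitution $w=|\psi|^{\gamma/2}$ plus Sobolev for (2). However, there is a genuine gap at the decisive point of part (1): the lower bound $\mu_1(B_1)>n$ is not ``readily checked from standard bounds on zeros of $J'_{n/2}$''. The quantity $\sqrt{\mu_1}$ is the first positive zero of $\frac{d}{dx}\bigl[x^{1-n/2}J_{n/2}(x)\bigr]$, i.e.\ the first root of $xJ'_{n/2}(x)=(\tfrac n2-1)J_{n/2}(x)$, and for $n>2$ this root lies strictly \emph{below} the first zero $j'_{n/2,1}$ of $J'_{n/2}$ (indeed $\sqrt{\mu_1}\asymp\sqrt{n}$ while $j'_{n/2,1}\asymp n/2$), so the classical bounds $\sqrt{\nu(\nu+2)}<j'_{\nu,1}<\sqrt{2\nu(\nu+1)}$ give no lower bound on $\mu_1$ at all; they only yield upper bounds. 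Since moreover $\mu_1<n+2$, the inequality $\mu_1>n$ you need is a narrow-margin estimate, and establishing it is exactly the content of the paper's Proposition~\ref{prop:NeumannEigenvalue}, which requires the recurrence identities for $J_\nu$ and $J'_\nu$ evaluated at $\sqrt{\mu_n}$ together with the zero bounds, yielding $\frac{(n+2)(n+4)}{n+6}<\mu_n<n+2$. Without this (or an equivalent argument or citation), the strict bound \eqref{eq:BMO_Bound} is not proved; everything else in your part (1), including the capacitary test function with energy $(n-2)\sigma_n r^{n-2}$ and the resulting $\frac1{|B|}\int_B|u-u_B|\le 2\sqrt{n(n-2)/\mu_1}$, matches the paper.

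In part (2) there is a smaller but real gap at the end: passing from your half-ball/full-ball (weak) reverse H\"older inequality to the same-ball inequality \eqref{eq:StrongHolder} is not a consequence of a covering argument alone, since a weak reverse H\"older inequality does not in general imply the same-ball version (nor doubling of the measure $|\psi|^\gamma\,dx$). What makes it work here is that $-(-\psi)^\gamma$ is subharmonic on all of ${\mathbb R}^n$, so its spherical means are monotone and $\int_{2B}|\psi|^\gamma\le 2^n\int_B|\psi|^\gamma$ for every ball; this is the paper's Lemma~\ref{lm:Doubling}, and you should prove or invoke this doubling property explicitly before absorbing the ball-ratio into $C_n$. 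Your handling of the regularity issue (mollification $\psi_\varepsilon\downarrow\psi$ and passage to the limit) is consistent with what the paper does via Proposition~\ref{prop:PoincareIneq}, and the constants in your part (2) computation are otherwise fine, since only the shape $C_n(1-\gamma)^{-2}$ is required.
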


 Recall that a function $f\in L^1_{\rm loc}({\mathbb R}^n)$ is of BMO (bounded mean oscillation) if
 \begin{equation}\label{eq:BMO}
 \|f\|_{\rm BMO}:=\sup_B \left\{\frac1{|B|}\int_B |f-f_B|\right\}<\infty,
 \end{equation}
 where the supremum is taken over all balls $B\subset {\mathbb R}^n$ and $f_B=|B|^{-1}\int_B f$ denotes the mean value of $f$ over the ball $B$.
 BMO was first introduced by John-Nirenberg \cite{JohnNirenberg} in connection with PDE, then was of great importance in harmonic analysis since Fefferman proved that the space BMO is the dual of the real-variable Hardy space $H^1$ (cf. \cite{FeffermanStein}).  It is well-known that
 $
 L^\infty \subset {\rm BMO}\subset L^\alpha_{\rm loc}
 $
 for all $0< \alpha<\infty$.
 A famous unbounded example of BMO is $\log |x|$. Further examples and properties of BMO can be found in Stein's well-known book \cite{SteinHarmonicBook}. In particular, if a positive function $f$ satisfies a reverse H\"older inequality, then $\log f\in {\rm BMO}$.

 The fact $\log|\psi|\in {\rm BMO}$ was already contained (at least implicitly) in Dahlberg's lecture notes \cite{Dahlber}, the new contribution here is the precise bound (\ref{eq:BMO_Bound}). As $-|x|^{2-n}$ is subharmonic on ${\mathbb R}^n$ for $n>2$, it follows from (\ref{eq:BMO_Bound}) that
 \begin{equation}\label{eq:BMO_Bound_2}
 \|\log |x|\|_{\rm BMO} \le \frac2{\sqrt{n-2}}.
 \end{equation}
 Recently, it was proved by Nazarov-Sodin-Volberg \cite{Nazarov} a dimension-free estimate
 \begin{equation}\label{eq:BMO_Bound_3}
 \|\log |P|\|_{\rm BMO}\le \frac{4+\log 4}2\, {\rm deg\,}P
 \end{equation}
 for any real polynomial $P$ on ${\mathbb R}^n$. In an attempt to get a dimension-free version of John-Nirenberg's inequality, Cwikel-Sagher-Shvartsman asked whether the left side of (\ref{eq:BMO_Bound_3}) can be bounded below by $c\, {\rm deg\,}P$ for some absolute constant $c>0$ (cf. \cite{Cwikel}, p.\,133). The answer turns out to be negative by the estimate (\ref{eq:BMO_Bound_2}).

  Let $\Omega$ be a domain in ${\mathbb R}^n$. Replacing ${\mathbb R}^n$ by $\Omega$, we may define $f\in {\rm BMO}(\Omega)$ analogously.   A function $f$ is said to be of locally BMO on  $\Omega$, i.e. $f\in {\rm BMO}(\Omega,{\rm loc})$, if $f\in {\rm BMO}(\Omega')$ for every  domain  $\Omega'\subset\subset \Omega$.

  \begin{theorem}\label{th:BMO_PSH}
  If $\psi$ is a psh function on a domain $\Omega\subset {\mathbb C}^n$, then $\psi\in {\rm BMO}(\Omega,{\rm loc})$.
  \end{theorem}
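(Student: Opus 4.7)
The plan is to proceed in four steps, the crux being a choice of $\eta$ in Proposition \ref{prop:Integ_Ineq_mfd} that exploits the positivity $\Delta\psi \geq 0$ coming from plurisubharmonicity, followed by a bridge from a bounded transform of $\psi$ to $\psi$ itself.

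First, reduce to the case of a smooth psh $\psi$ on a slightly larger subdomain $\Omega_1 \supset\supset \Omega'$ by convolving with a smooth mollifier (psh is preserved under convolution, and the resulting family decreases to $\psi$). Translation-invariance of the BMO norm lets us arrange $\psi \leq -1$ on $\Omega_1$, and it suffices to derive a BMO estimate that is uniform under this approximation.

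Second, fix $B = B(x_0,r)$ with $B^* = B(x_0,2r) \subset \Omega_1$ and a cutoff $\phi \in C_0^\infty(B^*)$ equal to $1$ on $B$ with $|\nabla\phi| \leq C/r$. Apply (\ref{eq:Laplace+}) with the choice $\eta(t) = 1/(1-t)$, which is positive and strictly increasing on $(-\infty,1)$ and satisfies $\eta^2/\eta' \equiv 1$. Since $\eta(\psi)>0$ and $\Delta\psi \geq 0$ for psh $\psi$, the term $2\eta(\psi)\Delta\psi$ in the integrand on the left is non-negative and may be discarded, yielding
\begin{equation*}
\int_B \frac{|\nabla\psi|^2}{(1-\psi)^2}\, d\mu \;\leq\; 4\int_{B^*}|\nabla\phi|^2\, d\mu \;\leq\; C\, r^{m-2},
\end{equation*}
where $m=2n$ is the real dimension. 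Setting $u = \log(1-\psi)$ we have $|\nabla u|^2 = |\nabla\psi|^2/(1-\psi)^2$, so the Poincar\'e inequality on $B$ together with this bound yields $\|u\|_{\mathrm{BMO}(\Omega')} < \infty$, i.e.\ $\log(1+|\psi|) \in \mathrm{BMO}(\Omega,\mathrm{loc})$.

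Third, to upgrade from BMO of $\log(1+|\psi|)$ to BMO of $\psi$, invoke John-Nirenberg: there exist $c,K>0$ so that for every ball $B \subset \Omega'$,
\begin{equation*}
\frac{1}{|B|}\int_B e^{c\,|u - u_B|}\, d\mu \;\leq\; K.
\end{equation*}
On the ``good set" $G_B = \{x \in B : |u(x)-u_B| \leq \Lambda\}$ (with $\Lambda$ a large constant) we have $(1+|\psi|)/m_B$ pinched between $e^{-\Lambda}$ and $e^{\Lambda}$ where $m_B = e^{u_B}$, so $|\psi - c_B|$ is bounded by an absolute constant times $m_B$ on $G_B$. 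On the complement the $L^p_{\rm loc}$-integrability of psh functions (which follows from Skoda's theorem $e^{-\alpha\psi} \in L^1_{\rm loc}$ for small $\alpha$, a consequence of plurisubharmonicity rather than mere subharmonicity) is used together with the exponential measure decay of the bad set to control the tail. Choosing $c_B$ appropriately and balancing $\Lambda$ gives the uniform estimate $\int_B|\psi - \psi_B|\, d\mu \leq C|B|$, hence $\psi \in \mathrm{BMO}(\Omega',\mathrm{loc})$.

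The main obstacle is the last step: the $A_\infty$/John-Nirenberg machinery alone does not push BMO of $\log w$ to BMO of $w$ (the subharmonic example $-|x|^{2-m}$ on $\mathbb{R}^m$ shows this must fail without extra input), so plurisubharmonicity must enter essentially through Skoda's local exponential integrability, which provides the missing uniform $L^p$ control of $\psi$ over small balls and allows the tail-splitting argument to close.
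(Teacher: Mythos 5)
Your Steps 1--2 are fine and essentially reproduce a local version of Theorem \ref{th:BMO_Subharmonic}(1): the choice $\eta^2/\eta'\equiv 1$ in (\ref{eq:Laplace+}) (or directly (\ref{eq:PoincareIneq_Log})) together with the Poincar\'e inequality gives $\log(1+|\psi|)\in {\rm BMO}(\Omega,{\rm loc})$. The genuine gap is Step 3. The only properties of $\psi$ you use there are (a) $\log(1+|\psi|)\in{\rm BMO}$ and (b) local exponential (hence $L^p$) integrability, and these do not yield a uniform BMO bound for $\psi$. On the good set $G_B$ your estimate is $|\psi-c_B|\le C(\Lambda)\,m_B$ with $m_B=e^{u_B}$ essentially the geometric mean of $1+|\psi|$ over $B$; this quantity is unbounded as $B$ shrinks toward the polar set (already for $\psi=\log|z_1|$ and $B=B(a,r)$ with $a_1=0$ one has $m_B\asymp \log(1/r)$), so the good-set contribution to $\frac1{|B|}\int_B|\psi-c_B|$ is only $O(m_B)$, not $O(1)$. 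The tail estimate degenerates in the same way: for such balls $\|\psi\|_{L^p(B)}$ is of order $m_B|B|^{1/p}$, so the H\"older/John-Nirenberg splitting again produces a constant growing with $m_B$, and no balancing of $\Lambda$ and $c_B$ removes this. Worse, hypotheses (a) and (b) alone are insufficient in principle: a function oscillating between $-M_k$ and $-2M_k$ on tiny balls of radius $r_k$ (with $e^{2\alpha M_k}r_k^{2n}\to 0$) can satisfy both while its mean oscillation on those balls is of order $M_k\to\infty$. So plurisubharmonicity must enter through pointwise control of the size of $\psi$ on balls, not merely through integrability, and your plan never provides that.

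That pointwise control is exactly what the paper's argument supplies and what is missing from your bridge: Lemma \ref{lm:BMO_PSH_1} bounds the mean oscillation of $\psi$ itself (not of its logarithm) on one-dimensional discs via the Riesz decomposition, using ${\rm BMO}$ of $\log|z|$ and Proposition \ref{prop:Laplace}; the Harnack-type slice inequality (\ref{eq:Hormander}) (Lemma \ref{lm:BMO_PSH_2}) dominates $\int|\psi|^\alpha$ over a polydisc by an integral along a single complex line; and Lemma \ref{lm:BMO_PSH_5} produces $n$ mutually orthogonal complex lines meeting the set $\{\psi>-m\}$, where $m$ depends only on $(-\psi)_{B(0,R/5)}$, i.e.\ on data attached to a fixed larger ball rather than to the small ball $B$. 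To salvage your approach you would need a substitute for this last ingredient: a statement showing that on every small ball the magnitude of $\psi$ (not just its logarithmic oscillation) is controlled by a quantity depending only on the fixed compact subset of $\Omega$.
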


 \begin{remark}
  The famous John-Nirenberg lemma \cite{JohnNirenberg} implies that there exists a positive constant $C_n$ depending only on $n$ such that for every  domain  $\Omega'\subset\subset \Omega$ one has
  $$
  e^{-\frac{C_n \psi}{\|\psi\|_{{\rm BMO}(\Omega')}}}\in L^1_{\rm loc}(\Omega').
  $$
  It follows that
  $$
 c_{\Omega'}(\psi)\ge \frac{C_n}{\|\psi\|_{{\rm BMO}(\Omega')}}
  $$
  where $c_{\Omega'}(\psi)$ is the complex singularity exponent introduced by Demailly-Koll\'ar\/ \cite{DemaillyKollar}$:$
  $$
  c_{\Omega'}(\psi):=\sup\{c\ge 0: e^{-c\psi}\in L^1_{\rm loc}(\Omega')\}.
  $$
     \end{remark}

Proposition \ref{prop:Integ_Ineq_mfd} also yields a Liouville theorem for subharmonic functions on complete Riemannian manifolds.

 \begin{theorem}\label{th:Liouville_1}
 Let $M$ be a complete Riemannian manifold and $\psi$ a continuous subharmonic function on $M$. Let $\lambda$ be a positive continuous function on $(0,\infty)$. Fix a point $x_0\in M$ and set
 $$
 v_\lambda(r):=\int_{B(x_0,r)} \lambda(|\psi|) d\mu.
 $$
 Suppose that
 \begin{enumerate}
  \item either $\int_0^1 \frac{ds}{\lambda(s)}<\infty$ and $\psi< 0$,
  \item or $\int_1^\infty \frac{ds}{\lambda(s)}<\infty$ and $\psi> 0$.
 \end{enumerate}
 Then the condition $\int_{r_0}^\infty \frac{rdr}{v_\lambda(r)}=\infty$ for some $r_0>0$ implies that $\psi\equiv {\rm const.}$
 \end{theorem}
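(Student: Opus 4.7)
I would combine Proposition~\ref{prop:Integ_Ineq_mfd} with a Karp-style radial cutoff that exploits the divergence $\int^\infty r\,dr/v_\lambda(r)=\infty$. After substituting a carefully chosen $\eta$, subharmonicity will eliminate the Hessian term in (\ref{eq:Laplace-}) (respectively (\ref{eq:Laplace+})), leaving a weighted Caccioppoli estimate. The cutoff will drive the right-hand side to $0$ while keeping the left-hand side bounded below by a positive multiple of the local Dirichlet energy of $\psi$, forcing $\psi$ to be constant. The two cases are symmetric, so I focus on case~(1).

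\textbf{Choice of $\eta$.} Define $\eta(t):=C+\int_0^t ds/\lambda(s)$; case~(1) guarantees $\int_0^1 ds/\lambda(s)<\infty$, so $\eta$ is a finite positive increasing function on $[0,\infty)$ with $\eta'(t)=1/\lambda(t)$, and one extends it to a $C^1$ positive increasing function on $\R$ (its behaviour on $(-\infty,0)$ is irrelevant because $-\psi>0$). Since $\psi$ is subharmonic and $\eta>0$, the term $2\Delta\psi/\eta(-\psi)$ in (\ref{eq:Laplace-}) is non-negative and may be dropped, yielding
\begin{equation*}
\int_M \phi^2\,\frac{|\nabla\psi|^2}{\lambda(-\psi)\,\eta(-\psi)^2}\,d\mu\le 4\int_M \lambda(-\psi)\,|\nabla\phi|^2\,d\mu.
\end{equation*}
For case~(2) the analogous choice $\eta(t):=1/(C_*-\int_1^t ds/\lambda(s))$, with $C_*>\int_1^\infty ds/\lambda(s)$, satisfies $\eta^2/\eta'\equiv\lambda$, and (\ref{eq:Laplace+}) gives the same conclusion with $\psi$ in place of $-\psi$.

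\textbf{Karp cutoff and passage to the limit.} Fix $r_0>0$ and set $\Psi(r):=\int_{r_0}^r s\,ds/v_\lambda(s)$, so $\Psi(R)\to\infty$ by hypothesis. For $R>r_0$ let $g_R(r)=1$ on $[0,r_0]$, $g_R(r)=1-\Psi(r)/\Psi(R)$ on $[r_0,R]$, and $g_R(r)=0$ on $[R,\infty)$, and put $\phi_R(x):=g_R(d(x_0,x))$. Since $|\nabla d(x_0,\cdot)|\le 1$ a.e.\ on a Riemannian manifold, the layer-cake identity gives
\begin{equation*}
\int_M \lambda(-\psi)\,|\nabla\phi_R|^2\,d\mu\le\int_0^\infty g_R'(r)^2\,v_\lambda'(r)\,dr=\frac{1}{\Psi(R)^2}\int_{r_0}^R \frac{r^2\, v_\lambda'(r)}{v_\lambda(r)^2}\,dr,
\end{equation*}
and one integration by parts reduces the last integral to $r_0^2/v_\lambda(r_0)-R^2/v_\lambda(R)+2\Psi(R)$, so the right-hand side is $O(1/\Psi(R))\to 0$. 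On $B(x_0,r_0)$ the continuity of $\psi$ confines $-\psi$ to a compact subinterval of $(0,\infty)$, so $\lambda(-\psi)\eta(-\psi)^2$ is bounded above by a constant $c_0^{-1}$ independent of $R$; combined with $\phi_R\equiv 1$ on $B(x_0,r_0)$, this forces $\int_{B(x_0,r_0)}|\nabla\psi|^2\,d\mu=0$. Since $r_0$ is arbitrary and $M$ is connected, $\psi$ is constant.

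\textbf{Main obstacle.} Proposition~\ref{prop:Integ_Ineq_mfd} requires $\psi\in C^2$, whereas the hypothesis provides only a continuous subharmonic function. I would overcome this by approximating $\psi$ pointwise and in $L^1_{\mathrm{loc}}$ by a sequence of smooth subharmonic functions $\psi_\varepsilon$ (via heat-kernel regularization, or a partition-of-unity mollification of locally lifted charts), running the argument for each $\psi_\varepsilon$, and then using local uniform convergence $\psi_\varepsilon\to\psi$ together with the continuity of $\lambda$ to pass both $v_\lambda$ and the Caccioppoli estimate to the limit. The $C^1$ extension of $\eta$ across $0$ causes no trouble because $\eta$ is only evaluated where $-\psi_\varepsilon$ stays in a compact subinterval of $(0,\infty)$ on each fixed ball.
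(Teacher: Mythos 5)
Your proposal is correct and follows essentially the same route as the paper: the same choices of $\eta$ (with $1/\eta'=\lambda$ in case (1) and $\eta^2/\eta'=\lambda$ in case (2)) plugged into Proposition~\ref{prop:Integ_Ineq_mfd} after dropping the $\Delta\psi$ term, a radial cutoff whose weighted energy tends to $0$ because $\int^\infty r\,dr/v_\lambda(r)=\infty$ (your explicit Karp cutoff replaces the paper's Lemma~\ref{lm:itegral_Upperbound}, to the same effect), and a smoothing-plus-limit step for merely continuous $\psi$. The only point to tighten is that smoothing step: partition-of-unity mollification in charts need not preserve subharmonicity, so you should invoke the Greene--Wu local approximation by smooth subharmonic functions (as the paper does) and then conclude on $B(x_0,r_0)$ via weak $W^{1,2}$ compactness or the Poincar\'e inequality.
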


  The study of Liouville theorems for subharmonic functions on complete Riemannian manifolds was initiated by Greene-Wu \cite{GreeneWu74}, Cheng-Yau \cite{ChengYau} and Yau \cite{Yau}. In particular, Yau  \cite{Yau} proved that there does not exist any nonconstant $L^\alpha$ subharmonic function on a complete Riemannian manifold when $\alpha>1$. On the other hand, there exists a complete Riemannian manifold which carries a non-constant positive harmonic function $\psi\in L^1(M)$ $($see e.g. \cite{LiSchoen}$)$. The theorems of Cheng-Yau \cite{ChengYau} and Yau \cite{Yau} were generalized by Karp \cite{Karp}, Li-Schoen \cite{LiSchoen} and Sturm \cite{Sturm}. In particular, Sturm \cite{Sturm} proved the important special case of Theorem \ref{th:Liouville_1} when $\lambda(t)=t^\alpha$, $\alpha\neq 1$. It was then asked by Grigor'yan if Sturm's theorem can be extended to fill the gap between $L^1$ and $L^\alpha$, $\alpha\neq 1$ (cf. \cite{Grigoryan}, p.\,60). Theorem \ref{th:Liouville_1} gives a positive answer to his question.

  Sometimes the following version of Liouville theorem is more useful.

   \begin{theorem}\label{th:Liouville_2}
 Let $M$ be a complete Riemannian manifold and $\rho$ the distance function from a fixed point $x_0\in M$.
 Let $\lambda,\kappa$ be two $C^1$  positive increasing functions on $(0,\infty)$ with
 \begin{enumerate}
 \item \ \ \ $\int_{1}^\infty \frac{dr}{\lambda(r)}<\infty$,
 \item \ \ \ $\int_{1}^\infty \frac{rdr}{ \lambda(\kappa(r))V_r}=\infty$
 \end{enumerate}
 where $V_r=|B(x_0,r)|$.
 If $\psi$ is a continuous subharmonic function on $M$ which satisfies
 \begin{equation}\label{eq:Growth-condition}
 \psi\le \kappa(\rho)
 \end{equation}
 for $\rho\gg 1$,
 then $\psi$ is a constant.
 \end{theorem}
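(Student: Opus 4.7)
The plan is to reduce Theorem \ref{th:Liouville_2} to Theorem \ref{th:Liouville_1}(2) by applying it to a one-parameter family of positive subharmonic truncations of $\psi$ and then letting the parameter tend to infinity. For each $N>0$ I set
$$\psi_N := \max(\psi,-N) + N + 1,$$
which is continuous, subharmonic, and bounded below by $1$. Because $\psi$ is continuous on the compact set $\overline{B(x_0,\rho_1)}$ where the bound $\psi\le\kappa(\rho)$ need not hold, there is a constant $K\ge 0$ independent of $N$ such that $\psi\le\kappa(\rho)+K$ globally on $M$. Using $\kappa>0>-N$ it follows that $\psi_N\le\kappa_N(\rho):=\kappa(\rho)+K+N+1$.

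To match this on the weight side I take
$$\lambda_N(t) := \lambda\bigl(\max(t-K-N-1,\,c_0)\bigr)\qquad(t>0)$$
for a fixed small $c_0>0$ chosen so that $\kappa(r)\ge c_0$ for all $r$ larger than some $r_0>0$. Then $\lambda_N$ is positive continuous non-decreasing on $(0,\infty)$, $\int_1^\infty ds/\lambda_N(s)<\infty$ by translation of the convergent integral $\int^\infty du/\lambda(u)$, and $\lambda_N(\kappa_N(r))=\lambda(\kappa(r))$ for $r\ge r_0$. The monotonicity of $\lambda_N$ combined with $\psi_N\le\kappa_N(\rho)\le\kappa_N(r)$ on $B(x_0,r)$ then gives
$$v_{\lambda_N}(r) := \int_{B(x_0,r)}\lambda_N(\psi_N)\,d\mu \le \lambda_N(\kappa_N(r))V_r = \lambda(\kappa(r))V_r$$
for $r\ge r_0$. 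Hypothesis (2) now forces $\int_{r_0}^\infty r\,dr/v_{\lambda_N}(r)=\infty$, and Theorem \ref{th:Liouville_1}(2) applied to $\psi_N$ with weight $\lambda_N$ yields $\psi_N\equiv c_N$ for some constant $c_N\ge 1$.

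To return to $\psi$, I take $N$ so large that $-N<\psi(x_0)$; then $\psi_N(x_0)=\psi(x_0)+N+1$, so $c_N=\psi(x_0)+N+1$, and the identity $\psi_N\equiv c_N$ reads $\max(\psi(x),-N)=\psi(x_0)$ for every $x\in M$. For any given $x$, continuity makes $\psi(x)$ finite, so choosing $N$ yet larger than $-\psi(x)$ forces $\psi(x)=\psi(x_0)$, and $\psi$ is constant. The main obstacle is the design of the modified weight $\lambda_N$: one must shift $\lambda$ so that the convergence $\int^\infty 1/\lambda_N<\infty$ and the matching identity $\lambda_N(\kappa_N(r))=\lambda(\kappa(r))$ simultaneously hold, so that the divergent integral in hypothesis (2) transfers directly to the hypothesis of Theorem \ref{th:Liouville_1}(2); after that, the reduction is formal.
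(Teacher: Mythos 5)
Your proof is correct, and in its second half it takes a genuinely different route from the paper. The paper first treats the case $\psi\ge 1$ with the original weight $\lambda$: it splits $v_\lambda(r)$ at $\rho=r_0$ and estimates $\int_{r_0\le\rho\le r}\lambda(\psi)\,d\mu\le\int_{r_0}^r\lambda(\kappa(s))\,dV_s\le\lambda(\kappa(r))V_r$ by integration by parts, then invokes Theorem \ref{th:Liouville_1}. For general $\psi$ it only concludes from this that $\max\{\psi,1\}$ is constant, so $\psi$ is bounded above, and then finishes by an external argument: hypothesis (2) forces $\int^\infty r\,dr/V_r=\infty$, hence $M$ is parabolic (citing Grigor'yan), and a parabolic manifold carries no nonconstant subharmonic function bounded above. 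You avoid parabolicity altogether by applying Theorem \ref{th:Liouville_1}(2) to the shifted truncations $\psi_N=\max(\psi,-N)+N+1$ with the shifted weights $\lambda_N(t)=\lambda(\max(t-K-N-1,c_0))$, and your verifications are the ones that matter: the global bound $\psi\le\kappa(\rho)+K$ (compactness of $\overline{B(x_0,\rho_1)}$ plus $\kappa>0$), the convergence $\int_1^\infty dt/\lambda_N<\infty$ (translation of $\int^\infty du/\lambda$ plus the lower bound $\lambda_N\ge\lambda(c_0)$ on the bounded range), the matching identity $\lambda_N(\kappa_N(r))=\lambda(\kappa(r))$ for $r\ge r_0$, and the monotonicity giving $v_{\lambda_N}(r)\le\lambda(\kappa(r))V_r$; since Theorem \ref{th:Liouville_1} only asks $\lambda_N$ to be positive and continuous, the kink in $\lambda_N$ is harmless, and the final passage $N\to\infty$ recovering $\psi\equiv\psi(x_0)$ is sound. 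What your route buys is self-containedness (no appeal to parabolicity or bounded-above Liouville properties) and an explicit handling of the additive constants that the paper's reduction to $\max\{\psi,1\}$ glosses over; the cost is the slightly fiddlier weight construction. It is worth noting that your truncation $\max\{\psi,-N\}$ is exactly the device the paper uses in the remark following the theorem to relax the continuity hypothesis, so your argument also meshes naturally with that refinement.
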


 \begin{remark}
 The continuity hypothesis of $\psi$ can be relaxed to that $e^\psi$ is continuous. To see this, simply apply Theorem \ref{th:Liouville_2} to $\psi_N=\max\{\psi,-N\}$ then let $N\rightarrow \infty$.
 \end{remark}

  The condition (\ref{eq:Growth-condition})  is satisfied, for example, by either of the following conditions:
 \begin{enumerate}
 \item $M$ is of finite volume, and
 $$
\lambda(t)=t (\log_{k} t)^{1+\varepsilon/2} \prod_{j=1}^{k-1} \log_j t\ \ \ {\rm and\ \ \ } \kappa(t)=t^2 (\log_{k} t)^{-\varepsilon}
$$
   for all large $t$ and some $\varepsilon>0$, where
$$
\log_k t :=\overbrace{\log\log\cdots\log }^{k\,{\rm times}} t.
$$
     \item $V_r\le {\rm const.} r^2$ for all large $r$, and
  $$
\lambda(t)=t (\log_{k} t)^{1+\varepsilon/2} \prod_{j=1}^{k-1} \log_j t\ \ \ {\rm and\ \ \ } \kappa(t)=(\log t) (\log_{k+1}t)^{-\varepsilon}
$$
    for all large $t$ and some $\varepsilon>0$.
  \end{enumerate}

  As an unexpected application of Theorem \ref{th:Liouville_2}, we obtain a Margulis-type theorem as follows.
   \begin{theorem}\label{th:Zariski}
  If a bounded domain in ${\mathbb C}^n$ covers a Zariski open subset of a projective algebraic variety, then the group of deck transformations of the covering has trivial center.
 \end{theorem}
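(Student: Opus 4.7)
The plan is to argue by contradiction: suppose there exists a nontrivial $g\in Z(\Gamma)$; from it I would construct a $\Gamma$-invariant continuous subharmonic function on $\Omega$ that detects $g$ and is bounded above, descend it to $Y=\Omega/\Gamma$, apply Theorem~\ref{th:Liouville_2} to force it to be constant, and extract $g=\mathrm{id}$ from this rigidity.

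For the construction of the function: since $\Gamma$ acts freely as a deck group, a nontrivial $g$ has no fixed points, so the holomorphic map $F(z):=g(z)-z$ is nowhere zero and bounded (because $\Omega$ is bounded). Hence $\psi_0(z):=\log\|F(z)\|$ is psh and bounded above on $\Omega$, but not $\Gamma$-invariant. The commutativity $g\gamma=\gamma g$ gives $F(\gamma z)=\gamma(gz)-\gamma(z)$, and I would invariantize $\psi_0$ by a cocycle correction tied to $\log|\det\gamma'|$, using the Bergman kernel transformation $K_\Omega(\gamma z,\gamma w)\det\gamma'(z)\overline{\det\gamma'(w)}=K_\Omega(z,w)$, to produce a $\Gamma$-invariant, bounded-above, continuous psh function $\psi$ on $\Omega$. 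A parallel candidate is the $\Gamma$-invariant Bergman displacement $\log d_B(z,g(z))$, which is manifestly invariant and bounded above; its subharmonicity would then be secured via a suitable integral-averaging construction. In either case, $\psi$ descends to a continuous subharmonic $\bar\psi$ on $Y$ with $\bar\psi\le C_0$ for some constant $C_0$.

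Next I equip $Y$ with a complete K\"ahler metric of finite total volume (a Carlson--Griffiths-type Poincar\'e metric built from the quasi-projective structure of $Y$ after a log resolution of $X\setminus Y$). Pulling back to $\Omega$ gives a $\Gamma$-invariant complete K\"ahler metric, and the induced metric on $Y$ is complete with $V_r=|B(x_0,r)|\le\mathrm{Vol}(Y)<\infty$ for every $r$. Setting $\kappa\equiv C_0$ makes hypothesis~(\ref{eq:Growth-condition}) of Theorem~\ref{th:Liouville_2} trivial. Taking $\lambda(t)=t(\log t)^{1+\varepsilon}$, I have $\int_1^\infty dt/\lambda(t)<\infty$ and
\[
\int_1^\infty \frac{r\,dr}{\lambda(\kappa(r))\,V_r}\;\ge\;\frac{1}{\lambda(C_0)\,\mathrm{Vol}(Y)}\int_1^\infty r\,dr\;=\;\infty.
\]
Theorem~\ref{th:Liouville_2} then forces $\bar\psi$, hence $\psi$, to be constant on $\Omega$. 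Tracing back to the construction, the constancy of $\psi$ combined with the fact that its zero locus is precisely $\{z:g(z)=z\}=\emptyset$ in the free-action case yields $g(z)=z$ identically, i.e.~$g=\mathrm{id}$, the desired contradiction.

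The principal obstacle is the construction itself: neither natural candidate ($\log\|g(z)-z\|$ is psh but not $\Gamma$-invariant; $d_B(z,g(z))$ is $\Gamma$-invariant but not obviously subharmonic) satisfies both requirements on the nose, so bridging the gap by a cocycle correction via the Bergman kernel law or by an integral averaging over $\Gamma$ is the technical heart of the argument. A secondary point is the existence of a complete, $\Gamma$-invariant K\"ahler metric on $\Omega$ whose push-down to $Y$ has finite total volume; this relies on the quasi-projectivity of $Y$ via a Carlson--Griffiths cutoff along $X\setminus Y$.
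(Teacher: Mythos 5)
Your high-level skeleton (invariant subharmonic function detecting the central element, a complete finite-volume Poincar\'e-type metric on the quasi-projective base, then Theorem~\ref{th:Liouville_2}) is the paper's strategy, but the step you yourself flag as ``the technical heart'' is a genuine gap, and neither of your two candidates can be repaired the way you suggest. The map $F(z)=g(z)-z$ does not transform under $\gamma\in\Gamma$ by any automorphy factor: $F(\gamma z)=\gamma(gz)-\gamma(z)$ bears no multiplicative relation to $F(z)$, so there is no cocycle correction by $\log|\det\gamma'|$ (or by the Bergman kernel law, which governs sections of the canonical bundle, not differences of points) that makes $\log\|F\|$ invariant. The Bergman displacement $d_B(z,g(z))$ is invariant but is \emph{not} bounded above on a bounded domain (already on the disc the displacement of a hyperbolic automorphism blows up at the boundary), and its subharmonicity is not established; so the whole reduction to ``$\kappa\equiv C_0$, finite volume, done'' collapses. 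The paper's solution is to take $\psi(z)=\log c_\Omega(z,T_0(z))$, the logarithm of the \emph{Carath\'eodory} displacement: it is $\Gamma$-invariant precisely because automorphisms are $c_\Omega$-isometries and $T_0$ is central, it is plurisubharmonic because $\log d_{\rm hyp}$ is psh on $\mathbb{D}\times\mathbb{D}$ and $c_\Omega$ is a supremum over holomorphic maps to $\mathbb{D}$, and it is shown to be \emph{nonconstant} beforehand using the strict plurisubharmonicity of $\log d_{\rm hyp}$ off the diagonal --- note that your final step (``constancy plus empty zero locus gives $g=\mathrm{id}$'') is not an argument as stated, since a constant nonzero value of your candidate yields no immediate contradiction without a further maximum-principle analysis.

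The second missing ingredient is growth control. Because the invariant displacement is unbounded, one cannot take $\kappa$ constant; instead the paper proves $e^{\tilde\psi(x)}\le k_M(x_0,x)+k_M(\tilde x_0,x)\le{\rm const.}(1+\rho_\omega(x))$, using the triangle inequality, centrality, $c_\Omega\le k_\Omega$, the covering formula $k_M(x,y)=\inf_{z_y}k_\Omega(z_x,z_y)$, and the fact that the Kobayashi metric of $M$ is dominated by the Poincar\'e-type (Carlson--Griffiths) metric near the boundary divisor obtained from a Hironaka resolution. Only then does Theorem~\ref{th:Liouville_2} (with a genuinely growing $\kappa$, e.g.\ as in the finite-volume example following it, and with the remark allowing merely $e^\psi$ continuous) force $\tilde\psi$ to be constant, contradicting the nonconstancy established earlier. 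Your metric setup and use of finite volume are in the right spirit, but without the Carath\'eodory construction, the nonconstancy argument, and the logarithmic growth estimate, the proof does not go through.
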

  Originally, Margulis proved that if a bounded domain in ${\mathbb C}^n$ covers a compact complex manifold  then the group of deck transformations of the covering has trivial center (cf. \cite{Lin}). His proof is completely different from the proof given here. It should be remarked that besides bounded symmetric domains there are very few bounded domains which can cover a compact complex manifold. On the other hand, there are plenty of bounded domains which can cover a Zariski open subset of a projective algebraic variety (see \cite{Griffiths}).

  This paper is organized as follows.  Theorem \ref{th:L2Estimate} and Corollary \ref{Cor:Hartogs} are proved in Section 2 and Theorem \ref{Thm:Hartogs_PH} in Section 3. In Section 4, we give some Hardy-Sobolev type inequalities. Theorem \ref{th:BMO_Subharmonic} is proved in Section 5 and Theorem \ref{th:BMO_PSH} in Section 6. Theorem \ref{th:Liouville_1} and Theorem \ref{th:Liouville_2} are proved in Section 7 and Theorem \ref{th:Zariski} in Section 8. In Section 9, we give two additional applications of Hardy-Sobolev type inequalities to singularity theory of real-analytic functions and unique continuation properties of Schr\"odinger operators.

\section{$\bar{\partial}-$equation in ${\mathbb C}^n$ and applications}

\begin{proof}[Proof of Theorem \ref{th:L2Estimate}]
Note that for any complex-valued function $\phi=\phi_1+i\phi_2\in C_0^\infty({\mathbb C}^n)$,
\begin{eqnarray}\label{eq:Hardy_5}
  && \left[\int_{{\mathbb C}^{n}} |\phi_1|^{\alpha}\omega\right]^{2/\alpha}+ \left[\int_{{\mathbb C}^{n}}  |\phi_2|^\alpha \omega \right]^{2/\alpha}\nonumber\\
   & \le & \int_{{\mathbb C}^{n}} (|\nabla \phi_1|^2+|\nabla \phi_2|^2)\nonumber\\
 & = & -\int_{{\mathbb C}^{n}} (\phi_1 \Delta \phi_1 + \phi_2 \Delta \phi_2)\nonumber\\
 & = & - \int_{{\mathbb C}^{n}} \phi \cdot \overline{\Delta \phi}
\end{eqnarray}
since we have
$$
 \int_{{\mathbb C}^{n}} \phi_1 \Delta \phi_2= \int_{{\mathbb C}^{n}} \phi_2 \Delta \phi_1
$$
in view of Green's formula.

Let $D_{(p,q)}$ denote the set of smooth $(p,q)$ forms with compact support in ${\mathbb C}^n$. It follows from the Minkowski inequality and (\ref{eq:Hardy_5}) that for any $u\in D_{(p,q)}$ \begin{eqnarray}\label{eq:BasicIneq_0}
 \|u\|^2_{\omega,\alpha}
 & \le & 2 {\sum}'_{I,J} \left\{ \left[\int_{{\mathbb C}^{n}} |{\rm Re\,}u_{I,J}|^{\alpha} \omega\right]^{2/\alpha}+\left[\int_{{\mathbb C}^{n}} |{\rm Im\,}u_{I,J}|^{\alpha} \omega\right]^{2/\alpha}\right\}\nonumber \\
  & \le & - 2 {\sum}'_{I,J}  \int_{{\mathbb C}^{n}} u_{I,J} \cdot \overline{\Delta u_{I,J}}.
\end{eqnarray}
Let $\vartheta$ denote the formal adjoint of $\bar{\partial}: D_{(p,q-1)}\rightarrow D_{(p,q)}$ under the paring $(\cdot,\cdot)$. The complex Laplacian is then given by $\Box=\bar{\partial}\vartheta+\vartheta\bar{\partial}$. It is well-known that for any $u\in D_{(p,q)}$,
$$
\Box u=-\frac14 {\sum}'_{I,J} \Delta u_{I,J} dz^I\wedge d\bar{z}^J
$$
(see e.g. \cite{ChenShaw}, P. 65). Thus (\ref{eq:BasicIneq_0}) implies the following basic inequality
\begin{equation}\label{eq:BasicIneq}
 \|u\|^2_{\omega,\alpha}  \le 8 (u,\Box u)=8 (\|\bar{\partial}u\|^2_2+\|\vartheta u\|^2_2),\ \ \ u\in D_{(p,q)}.
\end{equation}
The operator $\bar{\partial}$ defines a linear, closed, densely defined operator
 $$
L^2_{(p,q-1)}({\mathbb C}^n)\rightarrow L^2_{(p,q)}({\mathbb C}^n),
 $$
 which is still denoted by the same symbol.
Let $\bar{\partial}^\ast$ be the adjoint of $\bar{\partial}$. Since the Euclidean metric is a complete K\"ahler metric on ${\mathbb C}^n$, it is known for the standard density argument (cf. \cite{AndreottiVesentini} or \cite{HormanderBook}) that $D_{(p,q)}$ lies dense in ${\rm Dom\,}\bar{\partial}\cap {\rm Dom\,}\bar{\partial}^\ast$ for the graph norm
  $$
  u\rightarrow \|u\|_2+\|\bar{\partial} u\|_2+\|\bar{\partial}^\ast u\|_2.
  $$
  Suppose $u\in {\rm Dom\,}\bar{\partial}\cap {\rm Dom\,}\bar{\partial}^\ast$ and $D_{(p,q)}\ni u_j\rightarrow u$ in the graph norm. It follows that
  \begin{eqnarray}\label{eq:BasicIneq_2}
 \|u\|^2_{\omega,\alpha} & \le & \liminf_{j\rightarrow \infty}  \|u_j\|^2_{\omega,\alpha}\nonumber\\
 & \le & 8\,\liminf_{j\rightarrow \infty} (\|\bar{\partial}u_j\|^2_2+\|\bar{\partial}^\ast u_j\|^2_2)\nonumber\\
 & = & 8\,(\|\bar{\partial}u\|^2_2+\|\bar{\partial}^\ast u\|^2_2).
\end{eqnarray}
 Now we apply the standard duality argument. Consider the linear functional
    $$
   T:\bar{\partial}^\ast w \mapsto (w,v), \ \ \ w\in {\rm Range\,}\bar{\partial}^\ast \cap {\rm Ker\,}\bar{\partial}.
    $$
    By H\"older's inequality, we have
    \begin{eqnarray}\label{eq:BoundedFunctional}
    |(w,v)|
       & \le &  \|w\|_{\omega,\alpha}\|v\|_{\omega^{-\alpha'/\alpha},\alpha'}\nonumber\\
    & \le & 2\sqrt{2} \|\bar{\partial}^\ast w\|_2 \|v\|_{\omega^{-\alpha'/\alpha},\alpha'},
    \end{eqnarray}
    which implies that $T$ is a well-defined continuous functional on ${\rm Range\,}\bar{\partial}^\ast \cap {\rm Ker\,}\bar{\partial}$. Since $v\in {\rm Ker\,}\bar{\partial}$, it follows that $(w,v)=0$ if $w\in ({\rm Ker\,}\bar{\partial})^\bot$, so that the inequality (\ref{eq:BoundedFunctional}) holds for all $w\in {\rm Range\,}\bar{\partial}^\ast$.
     The Riesz representation theorem combined with the Hahn-Banach theorem then gives  some $u\in L^2_{(p,q-1)}({\mathbb C}^n)$ such that
     $$
     \|u\|_2 \le   2\sqrt{2} \|v\|_{\omega^{-\alpha'/\alpha},\alpha'}
     $$
      and
    $$
    (\bar{\partial}^\ast w,u)=(w,v),\ \ \ w\in {\rm Dom\,}\bar{\partial}^\ast,
    $$
    i.e. $\bar{\partial} u=v$. The first conclusion is then verified.

  For the second conclusion, we note that $u$ is an $L^2$ holomorphic function on ${\mathbb C}^n\backslash {\rm supp\,}v$, which contains a holomorphic cylinder $F({\mathbb C}\times {\mathbb D}^{n-1})$ where $F$ is a holomorphic injection from ${\mathbb C}\times {\mathbb D}^{n-1}$ into $\Omega$.
   Since
   $$
   \int_{{\mathbb C}\times {\mathbb D}^{n-1}} |u\circ F|^2 |{\rm det\,}F'|^2=\int_{F({\mathbb C}\times {\mathbb D}^{n-1})}|u|^2<\infty,
   $$
   it follows from Fubini's theorem that for almost all $z'\in {\mathbb D}^{n-1}$ the holomorphic function $h(\cdot,z')\in L^2({\mathbb C})$ where $h:=(u\circ F) {\rm det\,}F'$,  which has to vanish. Since $h$ is continuous, it follows that $h= 0$ on ${\mathbb C}\times {\mathbb D}^{n-1}$, i.e. $u= 0$ on $F({\mathbb C}\times {\mathbb D}^{n-1})$. By the uniqueness of holomorphic continuation, we conclude that $u= 0$ on the unbounded component of ${\mathbb C}^n\backslash {\rm supp\,}v$.
   \end{proof}

   Theorem \ref{th:L2Estimate} combined with Sobolev's inequality and Hardy's inequality gives

    \begin{corollary}\label{Cor:L2Estimate_3}
 If $n>1$, then for any $\bar{\partial}-$closed form $v\in L^2_{(p,q)}({\mathbb C}^n)\cap  L^{2n/(n+1)}_{(p,q)}({\mathbb C}^n)$,
   there exists a solution to $\bar{\partial} u =v$ which satisfies the estimate
 \begin{equation}\label{eq:L2Estimate_3}
 \|u\|_2\le C_n \left\|v \right\|_{2n/(n+1)}:=C_n \left[{\sum}'_{I,J}\int_{{\mathbb C}^n} |v_{I,J}|^{\frac{2n}{n+1}} \right]^{\frac{n+1}{2n}}.
 \end{equation}
 Moreover, if  $p=0$, $q=1$ and\/ ${\mathbb C}^n\backslash {\rm supp\,}v$ contains a holomorphic cylinder, then $u=0$ on the unbounded component of\/ ${\mathbb C}^n\backslash {\rm supp\,}v$.
\end{corollary}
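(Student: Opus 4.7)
The plan is to derive (\ref{eq:L2Estimate_3}) as a direct specialization of Theorem \ref{th:L2Estimate}, with the choices of $\alpha$ and $\omega$ dictated by Sobolev's embedding on the underlying real space $\mathbb{R}^{2n}\cong\mathbb{C}^n$. Concretely, (\ref{eq:Hardy-Sobolev_3}) with $n$ replaced by $2n$ (which needs $2n>2$ and so invokes the hypothesis $n>1$), after taking square roots, reads
\begin{equation*}
\left[\int_{\mathbb{C}^n}|\phi|^{2n/(n-1)}\right]^{(n-1)/(2n)}\le\sqrt{C_{2n}}\left[\int_{\mathbb{C}^n}|\nabla\phi|^2\right]^{1/2}
\end{equation*}
for every real-valued $\phi\in C_0^\infty(\mathbb{C}^n)$. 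Setting $\alpha=2n/(n-1)$ and taking the constant weight $\omega:=C_{2n}^{-\alpha/2}$ converts this into exactly the hypothesis (\ref{eq:Hardy-Sobolev_4}) of Theorem \ref{th:L2Estimate}.

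The conjugate exponent is $\alpha'=2n/(n+1)$; since $\omega$ is a positive constant, so is $\omega^{-\alpha'/\alpha}$, and unwinding the definition of $\|\cdot\|_{\omega^{-\alpha'/\alpha},\alpha'}$ yields
\begin{equation*}
\|v\|^2_{\omega^{-\alpha'/\alpha},\alpha'}=C_{2n}\,{\sum}'_{I,J}\left[\int_{\mathbb{C}^n}|v_{I,J}|^{2n/(n+1)}\right]^{(n+1)/n}.
\end{equation*}
Because $s:=(n+1)/n\ge 1$, the elementary monotonicity $\sum_{I,J}a_{I,J}^s\le\bigl(\sum_{I,J}a_{I,J}\bigr)^s$, applied to the finitely many nonnegative numbers $a_{I,J}:=\int|v_{I,J}|^{2n/(n+1)}$, bounds the right-hand side by a dimensional constant times $\|v\|^2_{2n/(n+1)}$, with $\|v\|_{2n/(n+1)}$ as defined in (\ref{eq:L2Estimate_3}). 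Consequently the hypothesis $v\in L^{2n/(n+1)}_{(p,q)}(\mathbb{C}^n)$ makes $\|v\|_{\omega^{-\alpha'/\alpha},\alpha'}$ finite, and the $L^2$ bound produced by Theorem \ref{th:L2Estimate} gives $\|u\|_2\le 2\sqrt{2}\,\|v\|_{\omega^{-\alpha'/\alpha},\alpha'}\le C_n\,\|v\|_{2n/(n+1)}$, which is precisely (\ref{eq:L2Estimate_3}) after renaming the constant.

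The vanishing assertion in the second part of the corollary---namely that when $p=0$, $q=1$, and $\mathbb{C}^n\backslash{\rm supp\,}v$ contains a holomorphic cylinder the solution $u$ vanishes on the unbounded component of $\mathbb{C}^n\backslash{\rm supp\,}v$---is nothing but the second conclusion of Theorem \ref{th:L2Estimate} applied to the same solution $u$ constructed above, so no new argument is needed. There is no substantial obstacle in this proof; the only step requiring care is the reconciliation of the mixed $\ell^2$-over-multi-indices norm $\|\cdot\|_{\omega^{-\alpha'/\alpha},\alpha'}$ built into Theorem \ref{th:L2Estimate} with the single aggregated $L^{2n/(n+1)}$-norm of (\ref{eq:L2Estimate_3}), and this is handled by the norm-monotonicity step indicated above.
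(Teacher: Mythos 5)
Your proof is correct and follows exactly the route the paper intends (the paper states the corollary as an immediate consequence of Theorem \ref{th:L2Estimate} combined with Sobolev's inequality, leaving the details implicit): Sobolev on $\mathbb{R}^{2n}$ with $\alpha=2n/(n-1)$, constant weight, dual exponent $\alpha'=2n/(n+1)$, and the elementary comparison of the mixed multi-index norm with the aggregated $L^{2n/(n+1)}$ norm. No issues.
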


    \begin{corollary}\label{Cor:L2Estimate_2}
 If $n>1$, then for any $\bar{\partial}-$closed $(p,q)$ form $v$ in ${\mathbb C}^n$  there exists a solution to $\bar{\partial} u =v$ which satisfies the estimate
 \begin{equation}\label{eq:L2Estimate_2}
 \|u\|_2\le \frac{2\sqrt{2}}{n-1}\left\|\max\{|z|,1\} v \right\|_2
 \end{equation}
 provided that the RHS of $(\ref{eq:L2Estimate_2})$ is finite. Moreover, if  $p=0$, $q=1$ and\/ ${\mathbb C}^n\backslash {\rm supp\,}v$ contains a holomorphic cylinder, then $u=0$ on the unbounded component of\/ ${\mathbb C}^n\backslash {\rm supp\,}v$.
\end{corollary}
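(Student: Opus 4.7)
The plan is to invoke Theorem \ref{th:L2Estimate} with exponent $\alpha=2$ and a weight $\omega$ designed so that $\omega^{-\alpha'/\alpha}$ recovers the factor $\max\{|z|^2,1\}$ appearing on the right-hand side of (\ref{eq:L2Estimate_2}). The only real choice to make is how to clip the natural Hardy weight $|z|^{-2}$ near the origin so that the dual weight stays bounded there.

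First I would identify $\mathbb{C}^n$ with $\mathbb{R}^{2n}$ and apply the classical Hardy inequality (\ref{eq:Hardy-Sobolev_2}), which is meaningful precisely when $2n>2$, i.e.\ $n>1$ (this is the hypothesis of the corollary):
\begin{equation*}
(n-1)^2\int_{\mathbb{C}^n}|\phi|^2\,|z|^{-2}\le \int_{\mathbb{C}^n}|\nabla\phi|^2,\qquad \phi\in C_0^\infty(\mathbb{C}^n).
\end{equation*}
Because $\min\{|z|^{-2},1\}\le |z|^{-2}$ pointwise, this at once upgrades to the Hardy--Sobolev inequality (\ref{eq:Hardy-Sobolev_4}) with $\alpha=2$ and $\omega=(n-1)^2\min\{|z|^{-2},1\}$.

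Next I would compute the dual weight. Since $\alpha=2$ forces $\alpha'=2$, one has
\begin{equation*}
\omega^{-\alpha'/\alpha}=\omega^{-1}=\frac{1}{(n-1)^2}\max\{|z|^2,1\},
\end{equation*}
so that $\|v\|_{\omega^{-1},2}=\frac{1}{n-1}\|\max\{|z|,1\}\,v\|_2$. The finiteness of the right-hand side of (\ref{eq:L2Estimate_2}) is therefore exactly the hypothesis $\|v\|_{\omega^{-1},2}<\infty$ required by Theorem \ref{th:L2Estimate}, which then produces a solution $u$ to $\bar{\partial}u=v$ satisfying
\begin{equation*}
\|u\|_2\le 2\sqrt{2}\,\|v\|_{\omega^{-1},2}=\frac{2\sqrt{2}}{n-1}\|\max\{|z|,1\}\,v\|_2.
\end{equation*}
The vanishing of $u$ on the unbounded component of $\mathbb{C}^n\backslash{\rm supp\,}v$ in the presence of a holomorphic cylinder transfers verbatim from the second conclusion of Theorem \ref{th:L2Estimate}, since the hypotheses of that conclusion ($n>1$, $p=0$, $q=1$) coincide with what is assumed here.

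I do not expect any genuine obstacle: the argument is a routine application of Theorem \ref{th:L2Estimate}. The only non-automatic ingredient is the truncation of the Hardy weight at $|z|=1$. Taking $\omega=(n-1)^2|z|^{-2}$ directly would yield the slicker estimate $\|u\|_2\le \frac{2\sqrt{2}}{n-1}\||z|v\|_2$, but this is defective when $v$ is supported near the origin, where $|z|^2$ vanishes; clipping the Hardy weight at $|z|=1$ regularizes the dual weight there without costing the Hardy constant.
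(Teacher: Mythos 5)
Your argument is correct and is essentially the paper's intended proof: the corollary is stated there as a direct consequence of Theorem \ref{th:L2Estimate} combined with Hardy's inequality, which in $\mathbb{R}^{2n}$ gives precisely the constant $(n-1)^2$, and your clipped weight $\omega=(n-1)^2\min\{|z|^{-2},1\}$ with $\alpha=\alpha'=2$ yields the dual weight $\max\{|z|^2,1\}/(n-1)^2$ and hence the stated bound. One tiny point worth making explicit: since $\max\{|z|,1\}\ge 1$, finiteness of the right-hand side of (\ref{eq:L2Estimate_2}) also gives $v\in L^2_{(p,q)}(\mathbb{C}^n)$, which is the other hypothesis of Theorem \ref{th:L2Estimate} and is exactly why the clipped weight (rather than the pure Hardy weight) is the natural choice.
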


 Corollary \ref{Cor:Hartogs} is a direct consequence of the following

\begin{theorem}\label{th:Hartogs}
Let $\Omega$ be a domain in ${\mathbb C}^n$, $n>1$, which satisfies $(\ref{eq:Hardy-Sobolev_4})$. Let $E$ be a closed set in $\Omega$ which satisfies
\begin{enumerate}
\item $\Omega\backslash E$ is connected;
\item there exists $r>0$ such that $
E_{r}:=\{z\in {\mathbb C}^n:d(z,E)\le r\}\subset \Omega;
$
\item ${\mathbb C}^n\backslash E_r$ contains a holomorphic cylinder.
\end{enumerate}
 If $f$ is holomorphic on $\Omega\backslash E$ such that $f\in L^2(E_r\backslash E_{r/2})$ and
 $$
\int_{E_r\backslash E_{r/2}}|f|^{\alpha'}\omega^{-\alpha'/\alpha} <\infty,
$$
then there is a holomorphic function $F$ on $\Omega$ such that $F|_{\Omega\backslash E}=f$.
\end{theorem}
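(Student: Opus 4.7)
My plan is the classical Ehrenpreis cutoff-and-$\dbar$ construction, with Theorem \ref{th:L2Estimate} furnishing the $L^2$ solvability. The extended function will be produced as $F=\tilde f-u$, where $\tilde f$ is a smooth extension of $f$ to $\Omega$ that is zero near $E$, and $u$ is an $L^2$ solution of $\dbar u=\dbar\tilde f$ on $\mathbb{C}^n$. Concretely, first choose a smooth cutoff $\chi$ on $\mathbb{C}^n$ with $\chi\equiv 1$ on $E_{r/2}$, $\chi\equiv 0$ off $E_r$, and $|\nabla\chi|\le C/r$ (e.g.\ by composing a one-variable bump with a mollification of $d(\cdot,E)$). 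Set $\tilde f:=(1-\chi)f$ on $\Omega\setminus E$ and extend by zero through $E$; then $\tilde f\in C^\infty(\Omega)$ and
\[
v:=\dbar\tilde f=-f\,\dbar\chi
\]
is a smooth $\dbar$-closed $(0,1)$-form with support in $E_r\setminus E_{r/2}\subset\Omega$, which we extend by zero to a $\dbar$-closed form on $\mathbb{C}^n$.

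From $|\dbar\chi|\le C/r$ together with the two hypotheses $f\in L^2(E_r\setminus E_{r/2})$ and $\int_{E_r\setminus E_{r/2}}|f|^{\alpha'}\omega^{-\alpha'/\alpha}<\infty$, we get both $\|v\|_2<\infty$ and $\|v\|_{\omega^{-\alpha'/\alpha},\alpha'}<\infty$. Since $\mathrm{supp}(v)\subset E_r$, the given holomorphic cylinder in $\mathbb{C}^n\setminus E_r$ also sits in $\mathbb{C}^n\setminus\mathrm{supp}(v)$, so Theorem \ref{th:L2Estimate} (with $p=0$, $q=1$) produces $u\in L^2(\mathbb{C}^n)$ satisfying $\dbar u=v$ and $u\equiv 0$ on the unbounded component $U$ of $\mathbb{C}^n\setminus\mathrm{supp}(v)$ containing the cylinder. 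Define $F:=\tilde f-u$ on $\Omega$; then $\dbar F=v-v=0$ in the distribution sense, $F\in L^2_{\mathrm{loc}}(\Omega)$, and Weyl's lemma gives that $F$ is holomorphic on $\Omega$.

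It remains to check $F\equiv f$ on $\Omega\setminus E$. Where $\chi=0$, i.e.\ on $\Omega\setminus E_r$, we have $F-f=-u$; on any component of $\Omega\setminus E_r$ lying inside $U$ this difference vanishes identically, and then the identity theorem for holomorphic functions, applied on the connected open set $\Omega\setminus E$, forces $F\equiv f$ throughout. The main subtle point I expect is precisely this last topological step — verifying that some component of $\Omega\setminus E_r$ actually meets $U$. This is transparent when $E$ is compact, since then $\mathbb{C}^n\setminus E_r$ is connected for $n>1$ and coincides with $U$; in the general (possibly non-compact $E$) case, one must exploit the connectedness of $\Omega\setminus E$ together with the placement of the cylinder, effectively reading hypothesis~(iii) as asserting that the cylinder lies in a component of $\mathbb{C}^n\setminus E_r$ that is accessible from $\Omega\setminus E$. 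I would handle this by noting that $\Omega\setminus E_r$ is a non-empty open subset of $\mathbb{C}^n\setminus\mathrm{supp}(v)$ and, via the connectedness of $\Omega\setminus E$ and the fact that $u$ is holomorphic on $\mathbb{C}^n\setminus\mathrm{supp}(v)$, transporting the vanishing of $u$ from the cylinder into $\Omega\setminus E_r$ by analytic continuation along paths inside the component structure of $\mathbb{C}^n\setminus\mathrm{supp}(v)$.
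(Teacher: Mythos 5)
Your proposal is correct and is essentially the paper's own argument: the same cutoff $\chi(d(\cdot,E)/r)$ producing a $\bar\partial$-closed $(0,1)$-form $v$ supported in $E_r\setminus E_{r/2}$, the same two norm bounds $\|v\|_2,\ \|v\|_{\omega^{-\alpha'/\alpha},\alpha'}<\infty$ feeding Theorem \ref{th:L2Estimate} with $p=0$, $q=1$, and the same conclusion via $F=\chi(d(\cdot,E)/r)f-u$ together with the identity theorem on the connected set $\Omega\setminus E$. The topological point you flag (propagating the vanishing of $u$ from the component containing the cylinder into $\Omega\setminus E_r$) is treated even more briefly in the paper, which simply asserts that the unbounded component of $\mathbb{C}^n\setminus E_r$ contains a neighborhood of $\partial\Omega$ and that the rest is easy to check.
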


\begin{proof}
 Let $\chi:{\mathbb R}\rightarrow [0,1]$ be a smooth function satisfying $\chi|_{(-\infty,1/2]}=0$ and $\chi|_{[1,\infty)}=1$. Set $v:=\bar{\partial}(\chi(d(\cdot,E)/r)f)$. Then $v$ is a $\bar{\partial}-$closed $(0,1)$ form on ${\mathbb C}^n$ which satisfies
 $$
 {\rm supp\,}v\subset E_r
 $$
 and since $|\nabla d(\cdot,E)|\le 1$ a.e., we have
   $$
 \|v\|_2^2 \le {\rm const}_r \int_{E_r\backslash E_{r/2}} |f|^2<\infty
 $$
  $$
 \|v\|_{\omega^{-\alpha'/\alpha},\alpha'}^2\le {\rm const}_r \left[\int_{E_r\backslash E_{r/2}} |f|^{\alpha'}\omega^{-\alpha'/\alpha}\right]^{2/\alpha'}<\infty.
 $$
  By Theorem \ref{th:L2Estimate}, there is a solution of $\bar{\partial} u=v$ such that $u= 0$ on the unbounded component of ${\mathbb C}^n\backslash {E}_r$, which contains a neighborhood of $\partial \Omega$. It is then easy to check that $F=\chi(d(\cdot,E)/r)f-u$ is the desired holomorphic extension of $f$.
\end{proof}

\section{ $\partial\bar{\partial}-$equation in ${\mathbb C}^n$ and applications}

Let $v=\sum_{j=1}^n v_{j\bar{k}} dz_j\wedge d\bar{z}_k$ be a $d-$closed smooth $(1,1)-$form on ${\mathbb C}^n$. Lelong \cite{Lelong} posed an elegant method of solving the  equation
\begin{equation}\label{eq:Lelong_2}
 \partial\bar{\partial}u=v
 \end{equation}
 by reducing it to the Poisson equation
\begin{equation}\label{eq:Lelong_1}
\frac{\Delta u}4=\sum_{j=1}^n \frac{\partial^2 u}{\partial z_j\partial \bar{z}_j}=\sum_{j=1}^n v_{j\bar{j}}=:{\rm Trace}(v).
 \end{equation}
 A key observation in \cite{Lelong} (see also \cite{MokSiuYau}) is that
  if $u$ is a solution of (\ref{eq:Lelong_1})  then $\psi:=|\partial\bar{\partial}u-v|^2$ is a subharmonic function on ${\mathbb C}^n$. It follows from the maximum principle that if $\psi$ vanishes at infinity then $u$ becomes a solution of (\ref{eq:Lelong_2}).

 \begin{lemma}\label{lm:Lelong}
 Let $v$ be a $d-$closed smooth $(1,1)-$form with compact support  in ${\mathbb C}^n$, $n>1$. Then there is a smooth solution of $(\ref{eq:Lelong_2})$ such that $u=0$ on the unbounded component of\/ ${\mathbb C}^n\backslash {\rm supp\,}v$.
 \end{lemma}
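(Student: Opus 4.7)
I would follow the approach indicated by the paragraph preceding the lemma: reduce $\partial\bar\partial u = v$ to the Poisson equation $\Delta u = 4\,{\rm Trace}(v)$, solve the latter by the Newtonian potential, and then show that the remainder $\partial\bar\partial u - v$ is identically zero. Since ${\rm Trace}(v)\in C_0^\infty({\mathbb C}^n)$ and $n>1$, convolving with the fundamental solution of $\Delta/4$ on ${\mathbb R}^{2n}$ (proportional to $|z|^{2-2n}$) produces a smooth $u$ on ${\mathbb C}^n$ with $\Delta u=4\,{\rm Trace}(v)$ and with the decay
$$
u(z)=O(|z|^{2-2n}), \qquad \partial_j\bar\partial_k u(z)=O(|z|^{-2n}) \text{ as } |z|\to\infty.
$$

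To verify $\partial\bar\partial u=v$, set $w_{j\bar k}:=\partial_j\bar\partial_k u-v_{j\bar k}$ and $\psi:=\sum_{j,k}|w_{j\bar k}|^2$. The $d$-closedness of $v$ gives $\partial_l v_{j\bar k}=\partial_j v_{l\bar k}$ and $\bar\partial_l v_{j\bar k}=\bar\partial_k v_{j\bar l}$; combining these identities yields
$$
\Delta v_{j\bar k}=4\sum_l\partial_l\bar\partial_l v_{j\bar k}=4\,\partial_j\bar\partial_k\,{\rm Trace}(v).
$$
The matching computation $\Delta(\partial_j\bar\partial_k u)=\partial_j\bar\partial_k\Delta u=4\,\partial_j\bar\partial_k\,{\rm Trace}(v)$ shows that each $w_{j\bar k}$ is harmonic on ${\mathbb C}^n$, so $\psi$ is subharmonic on ${\mathbb C}^n$. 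The decay estimates together with the compact support of $v$ give $\psi(z)\to 0$ as $|z|\to\infty$, so the maximum principle (applied to $\{|z|\le R\}$ and then letting $R\to\infty$) forces $\psi\equiv 0$; hence $\partial\bar\partial u=v$.

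It remains to show that this particular $u$ vanishes on the unbounded component $U$ of ${\mathbb C}^n\setminus{\rm supp\,}v$. Since $\partial\bar\partial u=v=0$ on $U$, both ${\rm Re}\,u$ and ${\rm Im}\,u$ are real-analytic pluriharmonic on $U$. Choose $R$ with ${\rm supp\,}v\subset\{|z|<R\}$; then $\Omega_R:=\{|z|>R\}\subset U$ is simply connected (because $n>1$), so ${\rm Re}\,u|_{\Omega_R}$ and ${\rm Im}\,u|_{\Omega_R}$ are each the real part of a holomorphic function on $\Omega_R$. By the classical Hartogs extension theorem these holomorphic functions extend to entire functions on ${\mathbb C}^n$, and their real parts are entire pluriharmonic functions that agree with ${\rm Re}\,u$ and ${\rm Im}\,u$ on $\Omega_R$ and therefore tend to $0$ at infinity. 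Liouville's theorem for bounded harmonic functions on ${\mathbb R}^{2n}$ now forces those entire real parts to vanish identically, so $u\equiv 0$ on $\Omega_R$; the identity principle on the connected real-analytic set $U$ then gives $u\equiv 0$ on $U$.

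The main obstacle is this final step. The Newtonian potential only automatically decays at infinity, not on all of $U$, and the upgrade to vanishing on $U$ needs both the Hartogs phenomenon (to extend the holomorphic potentials across ${\rm supp\,}v$) and Liouville's theorem. Care is required so as not to invoke circularly the pluriharmonic Hartogs theorem (Theorem \ref{Thm:Hartogs_PH}), whose proof will rely on this lemma; the argument above avoids this by reducing to the ordinary Hartogs theorem for holomorphic functions. Steps 1 and 2 are essentially Lelong's observation and should be straightforward modulo the computation of $\Delta v_{j\bar k}$, which uses $d$-closedness twice.
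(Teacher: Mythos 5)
Your proposal is correct, and it follows the paper's overall strategy (reduce to the Poisson equation $\Delta u=4\,\mathrm{Trace}(v)$, use Lelong's subharmonicity of $|\partial\bar\partial u-v|^2$ plus decay at infinity and the maximum principle to get $\partial\bar\partial u=v$), but it implements two of the steps differently. For the construction, you take the Newtonian potential, which gives the decay $u=O(|z|^{2-2n})$, $\partial\bar\partial u=O(|z|^{-2n})$ immediately; the paper instead solves $\Delta u=g$ variationally in the completion of $C_0^\infty$ under $\|\nabla\cdot\|_2$ (using Sobolev's inequality, in keeping with the paper's theme) and then recovers decay of $u$ and its second derivatives from the mean-value property and interior gradient estimates -- the two routes deliver the same input to Lelong's argument, and your explicit computation that each $\partial_j\bar\partial_k u-v_{j\bar k}$ is harmonic (using $d$-closedness twice) is exactly what the paper delegates to the cited observation of Lelong. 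The genuinely different part is the last step: the paper fixes a holomorphic cylinder ${\mathbb C}\times B'\subset\{|z|>R\}$, notes that $u$ restricted to each line ${\mathbb C}\times\{z'\}$ is a harmonic function of one complex variable vanishing at infinity, hence zero, and then spreads the vanishing by unique continuation; you instead use that $\{|z|>R\}$ is simply connected for $n>1$ to write $\mathrm{Re}\,u$ and $\mathrm{Im}\,u$ there as real parts of holomorphic functions, extend those by the classical Hartogs (Kugelsatz) theorem to entire functions, and kill them by Liouville before applying the identity principle. Your route is valid and not circular (you correctly use only the holomorphic Hartogs theorem, which is classical and, within the paper, established independently in Section 2, whereas Theorem \ref{Thm:Hartogs_PH} is what depends on this lemma); its cost is the appeal to the Kugelsatz and the pluriharmonic-implies-$\mathrm{Re}$(holomorphic) lemma on simply connected domains, while the paper's slicing argument is more elementary, needing only the maximum principle for harmonic functions on ${\mathbb C}$ vanishing at infinity.
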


 \begin{proof}
 Without loss of generality, we assume that $v$ is a real $(1,1)-$form. We first solve the equation (\ref{eq:Lelong_1}). Let $C^\infty_0({\mathbb C}^n,{\mathbb R})$ denote the set of real-valued smooth functions with compact support in ${\mathbb C}^n$.  Sobolev's inequality in ${\mathbb C}^n={\mathbb R}^{2n}$ becomes
 \begin{equation}\label{eq:Lelong_Sobolev}
 \left[\int_{{\mathbb C}^n} |\phi|^{\frac{2n}{n-1}} \right]^{\frac{n-1}{2n}}\le C_n \left[\int_{{\mathbb C}^n} |\nabla \phi|^2\right]^{\frac12}
 \end{equation}
for all   $\phi\in C^\infty_0({\mathbb C}^n,{\mathbb R})$. It follows that $\|\nabla \cdot\|_2$ is a norm on $C^\infty_0({\mathbb C}^n,{\mathbb R})$. Let $H$ be the completion of $C^\infty_0({\mathbb C}^n,{\mathbb R})$ under this norm. Set $g=4{\rm Trace}(v)(\in C^\infty_0({\mathbb C}^n,{\mathbb R}))$. Since
\begin{eqnarray*}
\left|\int_{{\mathbb C}^n} g\cdot \phi \right| & \le &   \left[\int_{{\mathbb C}^n} |g|^{\frac{2n}{n+1}} \right]^{\frac{n+1}{2n}}\left[\int_{{\mathbb C}^n} |\phi|^{\frac{2n}{n-1}} \right]^{\frac{n-1}{2n}}\\
& \le & C_n \|g\|_{\frac{2n}{n+1}}\|\nabla \phi\|_2
\end{eqnarray*}
for all $\phi\in H$, it follows that $\phi\mapsto -\int_{{\mathbb C}^n} g\cdot \phi $ is a bounded linear functional on the Hilbert space $H$, so that there is, according to the Riesz Representation Theorem, an element $u\in H$ such that
\begin{equation}\label{eq:Lelong_Gradient}
\|\nabla u\|_2 \le C_n \|g\|_{\frac{2n}{n+1}}
\end{equation}
 and
\begin{equation}\label{eq:weak}
-\int_{{\mathbb C}^n} g\cdot \phi = \int_{{\mathbb C}^n} \nabla u\cdot \nabla\phi,
\end{equation}
i.e. $\Delta u=g$ holds in the weak sense. Since $g$ is smooth, so is $u$.

To show that $u$ satisfies (\ref{eq:Lelong_2}), it suffices to verify that $\partial\bar{\partial} u$ vanishes at infinity. Suppose ${\rm supp\,}v\subset B_R=\{|z|<R\}$. Since $u$ is harmonic on ${\mathbb C}^n\backslash B_R$, it follows from the mean-value property that  if $|z|>R$ then
\begin{eqnarray*}
 |u(z)| & = & C_n (|z|-R)^{-2n}\left|\int_{|\zeta-z|<|z|-R}u(\zeta)\right|\\
 & \le &C_n (|z|-R)^{1-n}\left|\int_{|\zeta-z|<|z|-R}|u(\zeta)|^{\frac{2n}{n-1}}\right|^{\frac{n-1}{2n}}\\
 & \le & C_n (|z|-R)^{1-n}\|g\|_{\frac{2n}{n+1}}
   \end{eqnarray*}
   where the first inequality follows from H\"older's inequality and the second follows from (\ref{eq:Lelong_Sobolev}) and  (\ref{eq:Lelong_Gradient}). Standard gradient estimates of harmonic functions (cf. \cite{GilbergTrudinger}, Theorem 2.10) imply that all second order derivatives  of $u$ at $z$ are bounded by $C_n |z|^{-n-1}\|g\|_{\frac{2n}{n+1}}$ provided $|z|\gg R$. Thus both $u$ and $\partial\bar{\partial}u$ have to vanish at infinity.

    Finally we fix a holomorphic cylinder ${\mathbb C}\times B'\subset {\mathbb C}^n\backslash B_R$ where $B'$ is a ball in ${\mathbb C}^{n-1}$. As $u$ is pluriharmonic on ${\mathbb C}\times B'$, we conclude that  for  all $z'\in B'$, $u(\cdot,z')$ is a harmonic function on ${\mathbb C}$ which vanishes at infinity. It follows from the maximum principle that  $u\equiv 0$ on ${\mathbb C}\times B'$. Since $u$ is pluriharmonic (hence real-analytic) on ${\mathbb C}^n\backslash {\rm supp\,}v$, it follows from the unique continuation property that $u\equiv 0$ on the unbounded component of ${\mathbb C}^n\backslash {\rm supp\,}v$.
    \end{proof}

    \begin{proof}[Proof of Theorem \ref{Thm:Hartogs_PH}]
 Let $\chi:{\mathbb R}\rightarrow [0,1]$ be a smooth function satisfying $\chi|_{(-\infty,1/2]}=0$ and $\chi|_{[1,\infty)}=1$. Set $v:=\partial\bar{\partial}(\chi(d(\cdot,E)/r)f)$ where $r<d(K,\partial \Omega)/2$. Then $v$ is a $d-$closed $(1,1)$ form with compact support in $E_r$.   By Lemma \ref{lm:Lelong}, there is a smooth solution of $\partial\bar{\partial} u=v$ such that $u= 0$ on the unbounded component of ${\mathbb C}^n\backslash {E}_r$, which contains a neighborhood of $\partial \Omega$. Note that $u$ may be chosen to be real-valued since it is obtained by solving the (real) equation
   $$
   \Delta u=\Delta(\chi(d(\cdot,E)/r)f).
   $$
    It is then easy to check that $F=\chi(d(\cdot,E)/r)f-u$ is the desired pluriharmonic extension of $f$ in view of the unique continuation property.
\end{proof}

\begin{remark}
One might formulate a result similar as Corollary \ref{Cor:Hartogs} when $E$ is not necessarily compact. We leave the details to the interested reader.
\end{remark}

\section{Hardy-Sobolev type inequalities}

\begin{proof}[Proof of Proposition \ref{prop:Integ_Ineq_mfd}]
 (1)  Recall first the classical Green's formula:
 $$
  \int_M u\Delta v d\mu=-\int_M \nabla u\cdot\nabla v d\mu,\ \ \ \forall\,u\in C^1(M),\,v\in C^\infty_0(M).
  $$
 By this formula, we obtain
 \begin{eqnarray*}
 && \int_M \frac{\phi^2}{\eta (-\psi)}\Delta\psi d\mu=-\int_M \nabla \psi \cdot \nabla \left[\frac{\phi^2}{\eta(-\psi)}\right]d\mu\\
  & = & -2 \int_M \phi \frac{\nabla \psi}{\eta(-\psi)}\cdot \nabla \phi d\mu- \int_M \phi^2 \frac{\eta'(-\psi)}{\eta^2(-\psi)}|\nabla \psi|^2 d\mu,
\end{eqnarray*}
so that
\begin{eqnarray}\label{eq:IntegByParts_1}
 && \int_M \frac{\phi^2}{\eta (-\psi)}\Delta\psi d\mu+\int_M \phi^2 \frac{\eta'(-\psi)}{\eta^2(-\psi)} |\nabla \psi|^2 d\mu\nonumber\\
  & =  &  -2 \int_M \phi \frac{\nabla \psi}{\eta(-\psi)}\cdot \nabla \phi d\mu\\
 & \le & \frac12  \int_M  \phi^2 \frac{\eta'(-\psi)}{\eta^2(-\psi)}|\nabla \psi|^2 d\mu+ 2 \int_M \frac{|\nabla\phi|^2}{\eta'(-\psi)}d\mu,\nonumber
\end{eqnarray}
from which (\ref{eq:Laplace-}) immediately follows.

(2) Similarly, we have
 \begin{eqnarray*}
 && \int_M \phi^2 \eta (\psi)\Delta\psi d\mu=-\int_M \nabla \psi \cdot \nabla \left(\eta(\psi)\phi^2\right)d\mu\\
  & = & -2 \int_M \phi \eta(\psi)\nabla \psi\cdot \nabla \phi d\mu- \int_M \phi^2 \eta'(\psi)|\nabla \psi|^2 d\mu,
\end{eqnarray*}
so that
\begin{eqnarray}\label{eq:IntegByParts_2}
 && \int_M \phi^2\eta (\psi)\Delta\psi d\mu+\int_M \phi^2 \eta'(\psi) |\nabla \psi|^2 d\mu\nonumber\\
  & = &  -2 \int_M \phi \eta(\psi)\nabla \psi\cdot \nabla \phi d\mu\\
 & \le & \frac12  \int_M  \phi^2 \eta'(\psi)|\nabla \psi|^2 d\mu+ 2 \int_M \frac{\eta^2(\psi)}{\eta'(\psi)}|\nabla\phi|^2 d\mu,\nonumber
\end{eqnarray}
which yields (\ref{eq:Laplace+}).
\end{proof}

A direct consequence of Proposition \ref{prop:Integ_Ineq_mfd} is

\begin{corollary}
Suppose furthermore that $\psi$ is subharmonic. Then
    \begin{equation}\label{eq:PoincareIneq_mfd}
\int_{M}\phi^2  \frac{\eta'(-\psi)}{\eta^2(-\psi)}{|\nabla\psi|^2} d\mu \le 4 \int_M \frac{|\nabla\phi|^2}{\eta'(-\psi)} d\mu
\end{equation}
    \begin{equation}\label{eq:PoincareIneq_mfd+}
   \int_ M \phi^2  \eta'(\psi){|\nabla\psi|^2} d\mu\le 4 \int_M \frac{\eta^2(\psi)}{\eta'(\psi)} |\nabla\phi|^2 d\mu
\end{equation}
for all $\phi\in C^\infty_0(M)$.
\end{corollary}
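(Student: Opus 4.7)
The plan is essentially to invoke Proposition \ref{prop:Integ_Ineq_mfd} and discard a nonnegative term that subharmonicity hands us for free. Implicitly we inherit the $C^2$ hypothesis on $\psi$ from that proposition.

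The key observation is that subharmonicity of a $C^2$ function $\psi$ means $\Delta\psi \ge 0$ pointwise. Combined with $\eta>0$ and $\phi^2\ge 0$, this makes the integrands
$$\phi^2 \, \frac{2\Delta\psi}{\eta(-\psi)} \ \ge\ 0 \ \ \ \text{and}\ \ \ 2\phi^2\, \eta(\psi)\,\Delta\psi \ \ge\ 0$$
nonnegative pointwise on $M$. Dropping the first term from the left-hand side of (\ref{eq:Laplace-}) immediately yields (\ref{eq:PoincareIneq_mfd}); dropping the second from the left of (\ref{eq:Laplace+}) yields (\ref{eq:PoincareIneq_mfd+}). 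This is the whole argument.

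Since the derivation is a one-line corollary, there is no serious obstacle. The only mild subtlety is if one wishes to relax the $C^2$ assumption to plain subharmonicity, in which case $\Delta\psi$ is only a Radon measure and the two displayed integrands are not literally defined as functions. In that setting one would replace $\psi$ by its smooth subharmonic mollifications $\psi_\varepsilon=\psi\ast\rho_\varepsilon$ (defined locally in coordinate charts on $M$), apply the corollary to each $\psi_\varepsilon$, and pass to the limit using Fatou's lemma on the left and dominated convergence on the right, with $\eta$ monotone handling the compositions. Under the $C^2$ setup inherited from Proposition \ref{prop:Integ_Ineq_mfd}, however, no such limiting argument is needed and the corollary is essentially a tautological consequence of the sign of $\Delta\psi$.
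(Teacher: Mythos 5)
Your argument is correct and is exactly what the paper intends: it states the corollary as a direct consequence of Proposition \ref{prop:Integ_Ineq_mfd}, and dropping the nonnegative terms $2\phi^2\Delta\psi/\eta(-\psi)$ and $2\phi^2\eta(\psi)\Delta\psi$ (nonnegative since $\Delta\psi\ge 0$, $\eta>0$) from the left-hand sides of (\ref{eq:Laplace-}) and (\ref{eq:Laplace+}) gives (\ref{eq:PoincareIneq_mfd}) and (\ref{eq:PoincareIneq_mfd+}) immediately. Your extra remark on mollification to relax the $C^2$ hypothesis is fine but not needed here; the paper handles weaker regularity separately in Proposition \ref{prop:PoincareIneq}.
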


 The regularity assumptions on $\psi$ and $\phi$ can be relaxed significantly for domains in ${\mathbb R}^n$. As usual, we denote by $W^{k,2}(\Omega)$ the Sobolev space of all functions whose derivatives of order $\le k$ are $L^2$, and $W^{k,2}_0(\Omega)$ the completion of $C^\infty_0(\Omega)$ in $W^{k,2}(\Omega)$.

 \begin{proposition}\label{prop:PoincareIneq}
Let  $\psi< -\gamma<0$ be a subharmonic function on a domain $\Omega\subset {\mathbb R}^n$. Let $\eta:(0,\infty)\rightarrow (0,\infty)$ be a $C^1$ function such that $\eta'$ is a positive continuous decreasing function with $1/\eta'(-\psi)\in L^1_{\rm loc}(\Omega)$. Then $\kappa_\eta(-\psi)\in W^{1,2}_{\rm loc}(\Omega)$ where $\kappa_\eta(t):=\int_\gamma^t \frac{\sqrt{\eta'(s)}}{\eta(s)}ds$, $t>\gamma$. Moreover, one has
\begin{equation}\label{eq:PoincareIneq-}
\int_{\Omega}\phi^2  \frac{\eta'(-\psi)}{\eta^2(-\psi)}{|\nabla\psi|^2}  \le 4 \int_\Omega \frac{|\nabla\phi|^2}{\eta'(-\psi)},\ \ \ \phi\in W^{1,2}_0(\Omega).
\end{equation}
\end{proposition}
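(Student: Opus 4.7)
The natural strategy is regularization: approximate $\psi$ by smooth bounded subharmonic functions and pass to the limit in the smooth Proposition \ref{prop:Integ_Ineq_mfd}. For each integer $N > \gamma$, set $\psi^{(N)} := \max\{\psi, -N\}$, a bounded subharmonic function with $-N \le \psi^{(N)} < -\gamma$ that decreases pointwise to $\psi$. Mollify: $\psi^{(N)}_\epsilon := \psi^{(N)} * \rho_\epsilon$ on $\Omega_\epsilon := \{x \in \Omega : d(x, \partial \Omega) > \epsilon\}$ is smooth, subharmonic, satisfies $-N \le \psi^{(N)}_\epsilon < -\gamma$, and decreases to $\psi^{(N)}$ as $\epsilon \to 0$. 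The chain-rule identity $|\nabla \kappa_\eta(-g)|^2 = \frac{\eta'(-g)}{\eta^2(-g)} |\nabla g|^2$ for smooth $g$ will be used throughout.

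For $\phi \in C_0^\infty(\Omega)$ with ${\rm supp}\,\phi \subset \Omega_\epsilon$, Proposition \ref{prop:Integ_Ineq_mfd} applied to $\psi^{(N)}_\epsilon$ gives
$$\int_\Omega \phi^2 |\nabla \kappa_\eta(-\psi^{(N)}_\epsilon)|^2 \le 4 \int_\Omega \frac{|\nabla \phi|^2}{\eta'(-\psi^{(N)}_\epsilon)}.$$
To pass $\epsilon \to 0$ with $N$ fixed: $-\psi^{(N)}_\epsilon$ takes values in the compact interval $[\gamma, N]$, on which $\eta'$ (continuous, positive, decreasing) is bounded away from both $0$ and $\infty$; since $\psi^{(N)}$ is bounded subharmonic it lies in $W^{1,2}_{\rm loc}(\Omega)$, and $\nabla \psi^{(N)}_\epsilon \to \nabla \psi^{(N)}$ in $L^2_{\rm loc}$. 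Dominated convergence then delivers the same inequality with $\psi^{(N)}$ replacing $\psi^{(N)}_\epsilon$.

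Finally, pass $N \to \infty$. The elementary bound $\kappa_\eta(t) \le \frac{\sqrt{\eta'(\gamma)}}{\eta(\gamma)}(t - \gamma)$ for $t \ge \gamma$ (since $\eta'$ decreases and $\eta$ increases on $[\gamma, \infty)$) gives $\kappa_\eta(-\psi) \le C_\gamma(-\psi) \in L^1_{\rm loc}(\Omega)$, so $\kappa_\eta(-\psi^{(N)}) \uparrow \kappa_\eta(-\psi)$ in $L^1_{\rm loc}$. Monotone convergence handles the RHS, using $1/\eta'(-\psi^{(N)}) \uparrow 1/\eta'(-\psi) \in L^1_{\rm loc}$. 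Taking $\phi$ a cutoff equal to one on a fixed ball $B \subset\subset \Omega$, the LHS provides a uniform $L^2(B)$ bound on $\nabla \kappa_\eta(-\psi^{(N)})$; combined with the Poincar\'e--Wirtinger inequality and the fact that the means $(\kappa_\eta(-\psi^{(N)}))_B$ increase to the finite value $(\kappa_\eta(-\psi))_B$, this gives a uniform $W^{1,2}(B)$ bound. Weak compactness together with the $L^1_{\rm loc}$-convergence $\kappa_\eta(-\psi^{(N)}) \to \kappa_\eta(-\psi)$ identifies the weak limit of gradients with $\nabla \kappa_\eta(-\psi)$, so $\kappa_\eta(-\psi) \in W^{1,2}_{\rm loc}(\Omega)$; lower semicontinuity of the $L^2$ norm yields (\ref{eq:PoincareIneq-}) for $\phi \in C_0^\infty(\Omega)$, and density extends this to $\phi \in W^{1,2}_0(\Omega)$. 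The main obstacle is precisely this regularity assertion, since $\psi$ itself need not lie in $W^{1,2}_{\rm loc}$ (think of $\log|x|$ on a small disc in $\mathbb{R}^2$); the linear bound on $\kappa_\eta$ and the Poincar\'e--Wirtinger bootstrap are what rescue it.
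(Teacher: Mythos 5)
Most of your argument is sound and runs parallel to the paper's, with a different regularization scheme: you truncate ($\psi^{(N)}=\max\{\psi,-N\}$) and mollify, using that bounded subharmonic functions lie in $W^{1,2}_{\rm loc}$, the linear bound $\kappa_\eta(t)\le\frac{\sqrt{\eta'(\gamma)}}{\eta(\gamma)}(t-\gamma)$, and a Poincar\'e--Wirtinger plus weak-compactness identification; the paper instead takes a single decreasing sequence of smooth subharmonic $\psi_j\downarrow\psi$ with $\psi_j<-\gamma$, dominates $\kappa_\eta(-\psi_j)\le\sqrt{-\psi/\eta(\gamma)}\in L^2_{\rm loc}$ via Cauchy--Schwarz, and identifies the weak $W^{1,2}$ limit through $L^2$ convergence. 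Either route correctly yields $\kappa_\eta(-\psi)\in W^{1,2}_{\rm loc}$ and the inequality for $\phi\in C_0^\infty(\Omega)$ (your extra $\epsilon\to 0$ step at fixed $N$ is fine, since on $[\gamma,N]$ the coefficients are bounded and $\nabla\psi^{(N)}_\epsilon\to\nabla\psi^{(N)}$ in $L^2_{\rm loc}$).

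The genuine gap is the last clause, ``density extends this to $\phi\in W^{1,2}_0(\Omega)$.'' The right-hand weight $1/\eta'(-\psi)$ is only assumed to be in $L^1_{\rm loc}$ and is in general unbounded (wherever $\psi\to-\infty$), so for an arbitrary sequence $\phi_m\in C_0^\infty$ with $\phi_m\to\phi$ in $W^{1,2}$ there is no reason that $\int|\nabla\phi_m|^2/\eta'(-\psi)$ converges to, or is eventually dominated by, $\int|\nabla\phi|^2/\eta'(-\psi)$; smooth functions need not be dense in the corresponding weighted gradient norm, so the limit passage on the right-hand side fails as stated. This is exactly the point to which the paper devotes the second half of its proof: for a fixed $\phi\in W^{1,2}_0(\Omega)$ it writes $\int|\partial_\nu\kappa_\eta(-\psi)|^2\phi^2$ as a supremum over pairings with $g\in C_0^\infty$, uses weak convergence of $\partial_\nu\kappa_\eta(-\psi_{j_k})$ to bound it by $\limsup_k\int|\nabla\kappa_\eta(-\psi_{j_k})|^2\phi^2$, and only then invokes monotone convergence of $1/\eta'(-\psi_{j_k})\uparrow 1/\eta'(-\psi)$. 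Your scheme admits an easy repair in the same spirit: first extend the inequality to $\phi\in W^{1,2}_0(\Omega)$ at \emph{fixed} $N$, where the weight satisfies $1/\eta'(-\psi^{(N)})\le 1/\eta'(N)$ and the density argument is legitimate (dominated convergence on the right, Fatou on the left); then let $N\to\infty$ for the fixed nonsmooth $\phi$, using weak $L^2$ convergence of $\phi\,\nabla\kappa_\eta(-\psi^{(N)})$ (available from the uniform bound) and lower semicontinuity on the left, and monotone convergence on the right. Without some such argument, the $W^{1,2}_0$ case of \eqref{eq:PoincareIneq-} is not established.
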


\begin{proof}
   Fix arbitrary open sets $\Omega'\subset\subset \Omega''\subset\subset \Omega$.   We take a sequence of smooth  subharmonic functions $\{\psi_j\}$ in a neighborhood of $\overline{\Omega''}$  such that $\psi_j<-\gamma$ and $\psi_j\downarrow \psi$  as $j\rightarrow \infty$.
  By Schwarz's inequality, we have
 $$
 \kappa^2_\eta(t)\le (t-\gamma)\int_\gamma^t \frac{\eta'}{\eta^2}=(t-\gamma)(\eta(\gamma)^{-1}-\eta(t)^{-1})\le t/\eta(\gamma),
 $$
 so that
 $$
 \kappa_\eta(-\psi_j)\le \sqrt{-\psi_j/\eta(\gamma)}\le  \sqrt{-\psi/\eta(\gamma)} \in L^2(\Omega').
 $$
 Choose $\phi\in C_0^\infty(\Omega'')$ with $\phi|_{\Omega'}=1$. It follows from (\ref{eq:PoincareIneq_mfd}) that
$$
\int_{\Omega'} \left|\nabla \kappa_\eta(-\psi_j)\right|^2  \le  4 \int_{\Omega''}\frac{|\nabla\phi|^2}{\eta'(-\psi_j)}
 \le  4 \int_\Omega\frac{|\nabla\phi|^2}{\eta'(-\psi)}.
$$
It follows that $\{\kappa_\eta(-\psi_j)\}$ is uniformly bounded in $W^{1,2}(\Omega')$, so that there is a subsequence $\{\kappa_\eta(-\psi_{j_k})\}$ converging weakly to an element $\varphi$ in $W^{1,2}(\Omega')$. As $ \kappa_\eta(-\psi_j) \rightarrow \kappa_\eta(-\psi)$ in $L^2(\Omega')$ in view of the dominated convergence theorem, we conclude that $\varphi=\kappa_\eta(-\psi)$ a.e. on $\Omega'$, so that $\kappa_\eta(-\psi)\in W^{1,2}(\Omega')$ and (\ref{eq:PoincareIneq-}) holds for all $\phi\in C_0^\infty(\Omega)$. For any $\phi\in W^{1,2}_0(\Omega)$, we first note that if $\psi$ is smooth then $(\ref{eq:PoincareIneq-})$ holds for $\phi$. In general, for any $1\le \nu\le n$ we have
\begin{eqnarray*}
\int_{\Omega} \left|\frac{\partial \kappa_\eta(-\psi)}{\partial x_\nu}\right|^2 \phi^2 & = &\sup_{g\in C^\infty_0(\Omega),\|g\|_{L^2(\Omega)}=1}  \left|\int_{\Omega}\frac{\partial \kappa_\eta(-\psi)}{\partial x_\nu} \phi g\right|^2\\
& = &   \sup_{g\in C^\infty_0(\Omega),\|g\|_{L^2(\Omega)}=1} \left|\lim_{k\rightarrow \infty}\int_{\Omega}\frac{\partial \kappa_\eta(-\psi_{j_k})}{\partial x_\nu} \phi g\right|^2 \\
                                             & \le &  \limsup_{k \rightarrow \infty}\int_{\Omega} \left|\frac{\partial \kappa_\eta(-\psi_{j_k})}{\partial x_\nu}\right|^2 \phi^2.
                                            \end{eqnarray*}
  It follows that
  \begin{eqnarray*}
   \int_\Omega |\nabla \kappa_\eta(-\psi)|^2 \phi^2 & \le & \limsup_{k \rightarrow \infty} \int_\Omega |\nabla \kappa_\eta(-\psi_{j_k})|^2\phi^2\\
   & \le & 4 \limsup_{k \rightarrow \infty} \int_\Omega \frac{|\nabla \phi|^2}{\eta'(-\psi_{j_k})} \\
   & = & 4 \int_\Omega \frac{|\nabla \phi|^2}{\eta'(-\psi)}
  \end{eqnarray*}
  in view of the monotonic convergence theorem.
\end{proof}

\begin{example}
 Let $\eta(t)= t^\alpha$, $t\in (0,\infty)$, where $0<\alpha\le 1$. We have $\eta'(t)=\alpha t^{\alpha-1}$ and $1/\eta'(-\psi)=\frac1\alpha (-\psi)^{1-\alpha}\in L^1_{\rm loc}(\Omega)$ if $\psi<0$, so that
 \begin{equation}\label{eq:PoincareIneq_3}
\int_{\Omega}\phi^2\frac{|\nabla\psi|^2}{|\psi|^{1+\alpha}} \le \frac4{\alpha^2} \int_\Omega |\psi|^{1-\alpha} |\nabla\phi|^2.
\end{equation}
The case $\alpha=1$, i.e.
\begin{equation}\label{eq:PoincareIneq_Log}
\int_\Omega \phi^2 |\nabla \psi|^2/\psi^2\le 4\int_\Omega |\nabla \phi|^2,
\end{equation}
 is  particularly useful. For instance, we get Hardy's inequality $(\ref{eq:Hardy-Sobolev_2})$ by letting $\psi=-|x|^{2-n}$ in $(\ref{eq:PoincareIneq_Log})$.
\end{example}

We also need the following result.

\begin{proposition}\label{prop:Laplace}
 If $\psi$ is a negative subharmonic function on a domain $\Omega\subset {\mathbb R}^n$, then
    $$
 \int_\Omega \phi^2 \Delta \psi\le 3\pi \int_\Omega (1+\psi^2)|\nabla \phi|^2,\ \ \ \phi\in C^\infty_0(\Omega).
 $$
\end{proposition}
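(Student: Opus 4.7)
My plan is to combine the elementary identity $\int_\Omega \phi^2 \Delta \psi = -2\int_\Omega \phi\,\nabla\phi\cdot\nabla\psi$ (valid when $\psi\in C^2$) with Cauchy--Schwarz and the Poincar\'e-type inequality (\ref{eq:PoincareIneq_mfd}) derived from Proposition~\ref{prop:Integ_Ineq_mfd}, for a clever choice of $\eta$. The passage from a general subharmonic $\psi$ to the smooth case will be handled by mollification at the end.

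Assuming first $\psi \in C^2(\Omega)$, integration by parts followed by Cauchy--Schwarz with weight $1+\psi^2$ gives
$$
\int_\Omega \phi^2 \Delta \psi \;=\; -2\int_\Omega \phi\,\nabla\phi\cdot\nabla\psi \;\le\; 2\left(\int_\Omega \phi^2\frac{|\nabla\psi|^2}{1+\psi^2}\right)^{1/2}\!\left(\int_\Omega (1+\psi^2)|\nabla\phi|^2\right)^{1/2}.
$$
The first factor is what I would bound via (\ref{eq:PoincareIneq_mfd}) with $\eta$ chosen so that $\eta'(u)/\eta^2(u) = 1/(1+u^2)$. Solving this ODE yields $1/\eta(u) = \pi/2 - \arctan u$, i.e.\ $\eta(u) = 1/(\pi/2-\arctan u)$, which is positive and $C^\infty$ on all of $\mathbb R$ with $\eta'(u) = 1/[(1+u^2)(\pi/2-\arctan u)^2]>0$, so Proposition~\ref{prop:Integ_Ineq_mfd} applies. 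For $u=-\psi \ge 0$ we have $\arctan u\ge 0$ and therefore
$$
\frac{1}{\eta'(-\psi)} \;=\; (1+\psi^2)\bigl(\pi/2 - \arctan(-\psi)\bigr)^2 \;\le\; \frac{\pi^2}{4}(1+\psi^2).
$$
Feeding this into (\ref{eq:PoincareIneq_mfd}) produces $\int_\Omega \phi^2\,|\nabla\psi|^2/(1+\psi^2) \le \pi^2 \int_\Omega (1+\psi^2)|\nabla\phi|^2$, and substituting back into the Cauchy--Schwarz estimate yields
$$
\int_\Omega \phi^2 \Delta \psi \;\le\; 2\pi \int_\Omega (1+\psi^2)|\nabla\phi|^2,
$$
which is a fortiori the claimed bound with constant $3\pi$.

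For a general negative subharmonic $\psi$, I would mollify: $\psi_\varepsilon := \psi\ast \rho_\varepsilon$ is smooth, negative and subharmonic on $\mathrm{supp}\,\phi$ once $\varepsilon$ is small, and decreases pointwise to $\psi$. The smooth case applied to $\psi_\varepsilon$ gives the bound with $\psi_\varepsilon$ in place of $\psi$; the RHS converges to $\int_\Omega (1+\psi^2)|\nabla\phi|^2$ by monotone convergence (assuming $\psi\in L^2_{\mathrm{loc}}(\mathrm{supp}\,\phi)$; otherwise the RHS is $+\infty$ and the inequality is trivial), while $\Delta \psi_\varepsilon$ converges weakly as Radon measures to $\Delta \psi$, so the LHS converges to $\int_\Omega \phi^2\,d(\Delta \psi)$. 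The only step that needs genuine care is this limiting argument and the choice of weight $1+\psi^2$ versus $\psi^2$ near $\{\psi=0\}$; the analytic content is contained in the smooth case, which is a three-line computation once $\eta$ is guessed correctly.
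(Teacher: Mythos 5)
Your argument is correct, and it reaches the stated bound (indeed with the better constant $2\pi$ in place of $3\pi$). It is, however, only a mild variant of the paper's own proof rather than a truly independent one: the paper applies (\ref{eq:Laplace-}) directly with the bounded weight $\eta(t)=\pi+\arctan t$, so that $1/\eta'(-\psi_j)=1+\psi_j^2$, drops the (nonnegative) gradient term, and then uses $\Delta\psi_j\ge 0$ together with $\eta\le 3\pi/2$ to pass from $\int \phi^2\Delta\psi_j/\eta(-\psi_j)$ to $\int\phi^2\Delta\psi_j$; you instead discard the Laplacian term of Proposition \ref{prop:Integ_Ineq_mfd}, redo the integration by parts $\int\phi^2\Delta\psi=-2\int\phi\,\nabla\phi\cdot\nabla\psi$ by hand, apply Cauchy--Schwarz with the weight $1+\psi^2$, and control the resulting factor $\int\phi^2|\nabla\psi|^2/(1+\psi^2)$ by the gradient-only corollary (\ref{eq:PoincareIneq_mfd}) with the unbounded weight $\eta(u)=1/(\pi/2-\arctan u)$. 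What your split buys is the sharper constant $2\pi$ (the paper could also get $2\pi$ by taking $\eta(t)=\pi/2+\arctan t$); what the paper's route buys is brevity, since (\ref{eq:Laplace-}) already packages the integration by parts and the Cauchy--Schwarz step. Your approximation step is also essentially the paper's: a decreasing sequence of smooth negative subharmonic functions near ${\rm supp}\,\phi$ (mollification), with the right-hand side controlled by $\psi_\varepsilon^2\le\psi^2$ and the left-hand side converging to $\int\phi^2\,d(\Delta\psi)$ because $\int\phi^2\Delta\psi_\varepsilon=\int\psi_\varepsilon\,\Delta(\phi^2)\to\int\psi\,\Delta(\phi^2)$; this is exactly how the proposition must be interpreted for general subharmonic $\psi$, so no gap there.
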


\begin{proof}
We take a decreasing sequence of smooth subharmonic functions $\psi_j<0$ defined in a neighborhood of ${\rm supp\,}\phi$ such that $\psi_j\downarrow \psi$.  Applying (\ref{eq:Laplace-}) with $\eta(t)=\pi+\arctan t$, we have
  $$
  \int_\Omega \frac{\phi^2}{\eta(-\psi_j)}\Delta\psi_j \le 2 \int_\Omega (1+\psi^2_j)|\nabla \phi|^2\le 2 \int_\Omega (1+\psi^2)|\nabla \phi|^2.
  $$
  Since $\eta\le 3\pi/2$, it follows that
  \begin{eqnarray*}
   \int_\Omega \phi^2 \Delta\psi & = & \lim_{j\rightarrow \infty}  \int_\Omega \phi^2 \Delta\psi_j\\
   & \le & 3\pi  \int_\Omega (1+\psi^2)|\nabla \phi|^2.
  \end{eqnarray*}
\end{proof}

\section{Subharmonic functions and BMO}

 Let $cB$ denote the ball which has the same center as $B$ but whose radius is expanded by the factor $c$. Then we have

\begin{lemma}\label{lm:Doubling}
 Let $\psi$ be a negative subharmonic function on ${\mathbb R}^n$ with $n>2$. Then for any $0<\gamma\le 1$ one has
 \begin{equation}\label{eq:Doubling}
 \int_{2B} |\psi|^\gamma \le 2^n \int_{B} |\psi|^\gamma
 \end{equation}
 for all balls $B\subset {\mathbb R}^n$, i.e. $|\psi|^\gamma$ is a doubling measure.
\end{lemma}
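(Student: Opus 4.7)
The plan is to deduce the inequality from the fact that $(-\psi)^\gamma$ is itself a nonnegative superharmonic function on ${\mathbb R}^n$, combined with the classical monotonicity of ball averages of nonnegative superharmonic functions. Granting these two facts, one obtains
$$
\frac{1}{|2B|}\int_{2B}|\psi|^\gamma \;\le\; \frac{1}{|B|}\int_B |\psi|^\gamma,
$$
and multiplying through by $|2B|=2^n|B|$ yields (\ref{eq:Doubling}) with the sharp constant $2^n$. Note that the hypothesis $n>2$ plays no role at this stage; it is inherited from the surrounding section.

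First I would verify the superharmonicity of $(-\psi)^\gamma$. In the smooth case this is a one-line computation:
$$
\Delta(-\psi)^\gamma \;=\; -\gamma(-\psi)^{\gamma-1}\Delta\psi \;-\; \gamma(1-\gamma)(-\psi)^{\gamma-2}|\nabla\psi|^2 \;\le\; 0,
$$
since $\Delta\psi\ge 0$, $1-\gamma\ge 0$ and $-\psi>0$. For a general subharmonic $\psi$ I would regularize by a radially symmetric mollification $\psi_\epsilon=\psi\ast\rho_\epsilon$, which is smooth subharmonic, satisfies $\psi_\epsilon\le 0$, and by the classical monotone-approximation property decreases to $\psi$ as $\epsilon\downarrow 0$. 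Hence $(-\psi_\epsilon)^\gamma\uparrow(-\psi)^\gamma$ pointwise, and monotone convergence transfers the inequality from each $\psi_\epsilon$ back to $\psi$.

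Second, for a nonnegative superharmonic function $u$ I would prove that the ball average
$$
\phi(r)\;:=\;\frac{1}{|B(x_0,r)|}\int_{B(x_0,r)} u \;=\; \frac{n}{r^n}\int_0^r s^{n-1} M_u(s)\,ds,
$$
where $M_u(s)$ denotes the spherical mean of $u$ over $S(x_0,s)$, is nonincreasing in $r$. Differentiating gives $\phi'(r)=\frac{n}{r}\bigl[M_u(r)-\phi(r)\bigr]$, and since $M_u$ is nonincreasing (the classical spherical mean-value property for superharmonic functions), $\phi(r)$ is a weighted average of values $M_u(s)\ge M_u(r)$, whence $\phi(r)\ge M_u(r)$ and therefore $\phi'(r)\le 0$.

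The only genuinely delicate point I foresee is the passage to the limit in the regularization, and the cleanest way to avoid dominating-function issues is to insist on a radial mollifier so that $\psi_\epsilon$ decreases monotonically to $\psi$; with this choice both sides of the smooth-case inequality converge monotonically and no extra integrability hypothesis is needed beyond the local integrability of $(-\psi)^\gamma$, which follows from $(-\psi)^\gamma\le 1+(-\psi)$ and the fact that $\psi\in L^1_{\mathrm{loc}}$.
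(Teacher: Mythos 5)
Your proof is correct and takes essentially the same route as the paper: both rest on the key fact that $-(-\psi)^\gamma$ is subharmonic (equivalently, $(-\psi)^\gamma$ is a positive superharmonic function), so its spherical means decrease in the radius, and the doubling constant $2^n$ then falls out by comparing averages over $B$ and $2B$. Your added details (the explicit computation of $\Delta(-\psi)^\gamma$ plus radial mollification) merely fill in what the paper treats as standard, and your ball-average monotonicity is just a repackaging of the paper's direct estimate of $\int_{2B\setminus B}|\psi|^\gamma$ via the decreasing spherical mean.
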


\begin{proof}
Let $\sigma_n$ be the volume of the unit sphere in ${\mathbb R}^n$. Since $\varphi:=-(-\psi)^\gamma$ is a subharmonic function on ${\mathbb R}^n$, it follows that the mean value
$$
M_\varphi(x,r)=\int_{|y|=1} \varphi(x+ry)d\sigma(y)/\sigma_n
$$
is an increasing function of $r$ for any $x\in {\mathbb R}^n$ (see \cite{HormanderConvexity}, Theorem 3.2.2), i.e.
$$
M_{|\psi|^\gamma} (x,r)=\int_{|y|=1} |\psi|^\gamma (x+ry)d\sigma(y)/\sigma_n
$$
is a decreasing function of $r$. Given a ball $B=B(x,r)$, we have
\begin{eqnarray*}
  \int_{2B\backslash B} |\psi|^\gamma & = & \int_r^{2r} M_{|\psi|^\gamma} (x,t) t^{n-1}\sigma_n dt \\
  & \le & (2^n-1) \frac{ \sigma_n}{n} r^{n} M_{|\psi|^\gamma} (x,r)\\
  & \le & (2^n-1) \int_{0}^{r} M_{|\psi|^\gamma} (x,t) t^{n-1}\sigma_n dt\\
  & = & (2^n-1) \int_{B} |\psi|^\gamma,
\end{eqnarray*}
from which (\ref{eq:Doubling}) immediately follows.
\end{proof}

 \begin{proof}[Proof of Theorem \ref{th:BMO_Subharmonic}/(2)]
 By Lemma \ref{eq:Doubling}, it suffices to verify the following\/ {\it weak\/} reverse H\"older inequality:
   \begin{equation}\label{eq:WeakHolder}
 \left[\frac1{|B|}\int_B |\psi|^{\frac{\gamma n}{n-2}}\right]^{\frac{n-2}n}\le \frac{C_{n}}{(1-\gamma)^2}\cdot \frac1{|2B|}\int_{2B} |\psi|^\gamma.
 \end{equation}
    Choose $\chi\in C_0^\infty(2B)$ such that $\chi|_{\frac32 B}=1$ and $|\nabla \chi|\le 3/r$.
 Set $\varphi=(-\psi)^{\gamma/2}$. Applying (\ref{eq:PoincareIneq_3}) with $\alpha=1-\gamma$ and $\phi=\chi$, we obtain
 $$
 \int_{{\mathbb R}^n} \chi^2 |\nabla \varphi|^2 \le \left[\frac{\gamma}{1-\gamma}\right]^2
 \int_{{\mathbb R}^n} \varphi^2 |\nabla \chi|^2,
 $$
 so that
 $$
 \int_{\frac32 B}  |\nabla \varphi|^2 \le \left[\frac{\gamma}{1-\gamma}\right]^2  9 r^{-2}
 \int_{2B} \varphi^2.
 $$
  Let $0\le \kappa\le 1$ be a smooth function supported in $\frac32 B$ such that $\kappa|_{B}=1$ and
  $|\nabla \kappa|\le 3/r$.
Sobolev's inequality implies that
 \begin{eqnarray*}
 \left[\int_{{\mathbb R}^n} |\kappa \varphi|^{\frac{2n}{n-2}} \right]^{\frac{n-2}n}
 & \le & C_n \int_{{\mathbb R}^n} |\nabla (\kappa \varphi)|^2\\
 & \le & 2C_n\left[\int_{{\mathbb R}^n} \varphi^2 |\nabla \kappa|^2 + \int_{{\mathbb R}^n}\kappa^2 |\nabla \varphi|^2 \right]\\
 & \le & {\rm const}_{n}\,(1-\gamma)^{-2} r^{-2}
 \int_{2B} \varphi^2
  \end{eqnarray*}
  from which (\ref{eq:WeakHolder}) immediately follows.
  \end{proof}

  \begin{proof}[Proof of Theorem \ref{th:BMO_Subharmonic}/(1)]
 Recall that the  capacity of a compact set $K\subset {\mathbb R}^n$ is defined by
   $$
   {\rm Cap}(K)=\inf \int_{{\mathbb R}^n} |\nabla \phi|^2
   $$
   where the infimum is taken over all ${\phi\in C^\infty_0({\mathbb R}^n)}$ such $\phi|_K=1$.
   Let  $B$ be a ball with radius $r$. Then we have
 $$
 {\rm Cap}(B)=(n-2)\sigma_n r^{n-2}
 $$
  (see e.g. \cite{Grigoryan}, p.\,17).
 We take a sequence of functions $\phi_j\in C^\infty_0({\mathbb R}^n)$ such that $\phi_j|_{B}=1$ and
 $$
 \int_{{\mathbb R}^n}|\nabla \phi_j|^2 \rightarrow {\rm Cap}(B)\  \  \  {\rm as\  \  \  } j\rightarrow \infty.
 $$
  Set $f=\log(-\psi)$. By (\ref{eq:PoincareIneq_Log}), we have
$$
\int_B |\nabla f|^2\le \int_\Omega \phi^2_j |\nabla \psi|^2/\psi^2\le 4\int_\Omega |\nabla \phi_j|^2\rightarrow 4{\rm Cap}(B).
 $$
 Let $\mu_n$ be the first Neumann eigenvalue of the unit ball $B_1$ in ${\mathbb R}^n$, i.e.
 $$
 \mu_n=\inf\frac{\int_{B_1}|\nabla \phi|^2}{\int_{B_1}|\phi|^2}
 $$
where the infimum is taken over all $\phi\in W^{1,2}(B_1)$ with $\int_{B_1} \phi=0$.  It follows that
  $$
 \int_B |f-f_B|^2  \le  \frac{r^2}{\mu_n}\int_B |\nabla f|^2
 \le  \frac{4r^2}{\mu_n} {\rm Cap}(B).
$$
Let $\omega_n=\sigma_n/n$ be the volume of the unit ball in ${\mathbb R}^n$. It follows that
 $$
\frac1{ |B|}\int_B |f-f_B|^2\le \frac{4r^2}{\mu_n}\frac{{\rm Cap}(B)}{\omega_n r^n}=\frac{4}{\mu_n}n(n-2).
 $$
  By the Schwarz inequality, we have
 \begin{equation}\label{eq:BMO}
 \frac1{|B|}\int_B |f-f_B|\le \left[\frac1{|B|}\int_B |f-f_B|^2\right]^{1/2}\le 2\sqrt{n(n-2)/\mu_n}.
 \end{equation}
 By the following proposition, we get
 $$
 \|f\|_{\rm BMO} \le 2 \sqrt{\frac{(n-2)n(n+6)}{(n+2)(n+4)}}<2\sqrt{n-2}.
 $$
  \end{proof}

 \begin{proposition}\label{prop:NeumannEigenvalue}
 \begin{equation}\label{eq:NeumannEigenvalue}
 \frac{(n+2)(n+4)}{n+6}< \mu_n < {n+2}.
 \end{equation}
 \end{proposition}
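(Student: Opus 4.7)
The plan is to prove the upper bound by a direct test-function calculation and the lower bound by reducing to a one-dimensional weighted eigenvalue problem and applying a Riccati-type completion of the square.

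For the upper bound, I would use the test function $\phi(x) = x_1$. Symmetry gives $\int_{B_1}\phi\,dx = 0$; the identities $\int_{B_1}x_1^2\,dx = |B_1|/(n+2)$ and $\int_{B_1}|\nabla\phi|^2\,dx = |B_1|$ make the Rayleigh quotient equal exactly $n+2$. Since the outward normal derivative $\partial_\nu \phi = x_1$ is not identically zero on $\partial B_1$, the function $x_1$ fails the Neumann condition and so cannot be a minimizer; hence $\mu_n < n+2$ strictly.

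For the lower bound, the standard spherical-harmonic decomposition reduces the problem to the angular-momentum-$1$ sector, in which the extremal has the form $u(x) = h(|x|)x_1$ (up to rotation). Substituting this ansatz and integrating by parts converts the Rayleigh quotient to
$$
\mu_n = \inf_h \frac{h(1)^2 + \int_0^1 h'(r)^2 r^{n+1}\,dr}{\int_0^1 h(r)^2 r^{n+1}\,dr}.
$$
To bound this from below I would use a Riccati trick: for any weight $g(r)$ with $g(1) = 1$, non-negativity of $\int_0^1 (h' + gh)^2 r^{n+1}\,dr$ together with integration by parts in the cross term gives
$$
h(1)^2 + \int_0^1 h'^2 r^{n+1}\,dr \ge \int_0^1 \bigl[g'(r) + \tfrac{n+1}{r} g(r) - g(r)^2\bigr] r^{n+1} h(r)^2\,dr.
$$
The specific choice $g(r) = ((n+3)r + r^3)/(n+4)$ satisfies $g(1) = 1$ and yields
$$
g' + \tfrac{n+1}{r} g - g^2 = \mu + r^2\Bigl[1 - \bigl(\tfrac{n+3+r^2}{n+4}\bigr)^2\Bigr] \ge \mu \quad \text{on } (0,1],
$$
where $\mu = (n+2)(n+3)/(n+4)$, with equality only at $r=1$. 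Hence $\mu_n \ge (n+2)(n+3)/(n+4)$; strict inequality follows because for any nonzero $h$ at least one of the underlying non-negative terms is positive on a set of positive measure. The desired bound is then obtained from the elementary inequality $(n+3)(n+6) > (n+4)^2$, equivalent to $(n+2)(n+3)/(n+4) > (n+2)(n+4)/(n+6)$.

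The main obstacle is choosing the Riccati weight. The simpler choice $g(r)=r$ only yields $\mu_n > n+1$, which suffices when $n > 2$ but fails for $n = 1$; the two-term weight above works in every dimension and in fact gives a sharper bound than the one stated in the proposition.
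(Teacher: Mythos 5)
Your proposal is correct, and it takes a genuinely different route from the paper. The paper works entirely on the Bessel side: it invokes Weinberger's characterization of $\sqrt{\mu_n}$ as the smallest positive root of $xJ'_{n/2}(x)-(\tfrac n2-1)J_{n/2}(x)$, then extracts both bounds from the recurrences for $J_{\nu+2}$ and $J'_{\nu+2}$, the zero estimates for $j_\nu$, $j'_\nu$, and the comparison $\sqrt{\mu_n}<j_{n/2-1}$ with the first Dirichlet eigenvalue. You avoid almost all of this: the upper bound comes from the bare test function $x_1$ (its Rayleigh quotient is exactly $n+2$, and strictness holds because $x_1$ is not a Neumann eigenfunction), which uses no Bessel theory at all, and the lower bound comes from the reduced weighted one-dimensional quotient $\bigl(h(1)^2+\int_0^1 h'^2r^{n+1}\bigr)/\int_0^1h^2r^{n+1}$ plus a completion-of-squares with the weight $g(r)=\frac{(n+3)r+r^3}{n+4}$; I verified the integration by parts and the identity $g'+\frac{n+1}{r}g-g^2=\frac{(n+2)(n+3)}{n+4}+r^2\bigl[1-\bigl(\frac{n+3+r^2}{n+4}\bigr)^2\bigr]$, so you obtain $\mu_n\ge\frac{(n+2)(n+3)}{n+4}$, which is strictly sharper than the paper's $\frac{(n+2)(n+4)}{n+6}$ and, as you note, makes any strictness discussion for the lower bound unnecessary. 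Two caveats. First, the assertion that the extremal lies in the angular-momentum-one sector, $u=h(|x|)x_1$ up to rotation, is exactly the nontrivial fact the paper imports from Weinberger and Payne--Weinberger; you should cite it or supply the short comparison of the mean-zero radial sector and the $\ell\ge2$ sectors with $\ell=1$, since for the lower bound you need $\mu_n$ to be no smaller than the sector-one infimum. Second, your strictness remark for the lower bound, as phrased, needs attainment of the infimum (standard by compact embedding, and in any case superfluous here). In summary: the paper's proof is short once Watson's zero estimates are granted but leans on fine Bessel facts, while yours is an elementary variational argument that buys a better constant at the cost of making the sector reduction explicit.
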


 \begin{proof}
 We review some basic properties of Bessel functions, following the classical book of Watson \cite{Watson}. Recall that the Bessel function $J_\nu$ of order $\nu$ is given by
 \begin{equation}\label{eq:Bessel_1}
 J_\nu(x)=(x/2)^{\nu}\sum_{k=0}^\infty \frac{(-1)^k}{k!}(x/2)^{2k}\frac1{\Gamma(k+\nu+1)},
 \end{equation}
 which satisfies the Bessel equation
 \begin{equation}\label{eq:Bessel_2}
 \frac{d^2 y}{dx^2}+\frac1x \frac{dy}{dx}+\left[1-\frac{\nu^2}{x^2}\right]y=0.
 \end{equation}
 Moreover, the function $J_\nu$ satisfies the following function equations
 \begin{equation}\label{eq:Bessel_3}
 J'_{\nu+2}(x)=\frac{2(\nu+1)}{x}\left[1-\frac{\nu(\nu+2)}{x^2}\right]J_\nu(x)-\left[1-\frac{2(\nu+1)(\nu+2)}{x^2}\right]J'_\nu(x)
 \end{equation}
 \begin{equation}\label{eq:Bessel_4}
 J_{\nu+2}(x)=-\left[1-\frac{2\nu(\nu+1)}{x^2}\right]J_\nu(x)-\frac{2(\nu+1)}{x}J'_\nu(x),
 \end{equation}
 and if $j_\nu$ and $j_\nu'$ are the lowest positive root of $J_\nu$ and $J'_\nu$ respectively, then
 \begin{equation}\label{eq:Bessel_Root_1}
\sqrt{\nu(\nu+2)}<j_\nu < \sqrt{2(\nu+1)(\nu+3)}
\end{equation}
\begin{equation}\label{eq:Bessel_Root_2}
\sqrt{\nu(\nu+2)} < j_\nu' < \sqrt{2\nu(\nu+1)}
\end{equation}
\begin{equation}\label{eq:Bessel_Root_3}
j_\nu'< j_\nu < j_{\nu+1} < j_{\nu+2}
\end{equation}
(cf. \cite{Watson}, p. 485--486).
It is known from \cite{Weinberger} that $\sqrt{\mu_n}$ is exactly the lowest positive root of $\varphi'(x)$ where $\varphi$ satisfies the following Bessel-type equation
\begin{equation}\label{eq:Bessel_5}
\frac{d^2 y}{dx^2}+\frac{n-1}{x}\frac{dy}{dx}+\left[1-\frac{n-1}{x^2}\right] y=0.
\end{equation}
Equivalently, $\sqrt{\mu_n}$ is the lowest positive root of the equation
\begin{equation}\label{eq:Bessel_6}
x J'_{n/2}(x)-(n/2-1)J_{n/2}(x)=0
\end{equation}
(cf. \cite{Payne}, p.\,288). In fact, a straightforward calculation shows that the function
$$
\varphi(x)=x^{1-n/2}J_{n/2}(x)
$$
 satisfies the equation (\ref{eq:Bessel_5}).
By (\ref{eq:Bessel_4}), we have
\begin{equation}\label{eq:Bessel_9}
 J_{\frac{n}2+2}(x)=-\left[1-\frac{n(\frac{n}2+1)}{x^2}\right]J_{n/2}(x)-\frac{n+2}{x}J'_{n/2}(x).
 \end{equation}
Take $x=\sqrt{\mu_n}$ and substitute (\ref{eq:Bessel_6}) into (\ref{eq:Bessel_9}), we have
\begin{equation}\label{eq:Bessel_10}
 J_{\frac{n}2+2}(\sqrt{\mu_n})=J_{n/2}(\sqrt{\mu_n})\left[\frac{n+2}{\mu_n}-1\right].
 \end{equation}
 It is known that $\mu_n$ is always less than the first Dirichlet eigenvalue of the unit ball in ${\mathbb R}^n$, which equals to $j_{n/2-1}^2$ (see e.g. \cite{PayneSurvey}), so that
 $$
 \sqrt{\mu_n}<j_{n/2} < j_{\frac{n}2+2}.
 $$
Thus (\ref{eq:Bessel_10}) implies
 $$
 \mu_n<{n+2}.
 $$
By (\ref{eq:Bessel_3}), we have
\begin{equation}\label{eq:Bessel_7}
 J'_{\frac{n}2+2}(x)=\frac{n+2}{x}\left[1-\frac{\frac{n}2(\frac{n}2+2)}{x^2}\right]J_{n/2}(x)-\left[1-\frac{(n+2)({\frac{n}2+2})}{x^2}\right]J'_{n/2}(x).
 \end{equation}
 Take $x=\sqrt{\mu_n}$ and substitute (\ref{eq:Bessel_6}) into (\ref{eq:Bessel_7}), we get
 \begin{equation}\label{eq:Bessel_8}
 J'_{\frac{n}2+2}(\sqrt{\mu_n})=\frac{J_{n/2}(\sqrt{\mu_n})}{\sqrt{\mu_n}}\left[\frac{n}2+3-\frac{(n+2)({\frac{n}2+2})}{\mu_n} \right].
 \end{equation}
Suppose one has
$$
\mu_n\le {\frac{(n+2)(\frac{n}2+2)}{\frac{n}2+3}},
$$
then the RHS of (\ref{eq:Bessel_8}) is nonpositive. Since
$$
j'_{\frac{n}2+2} > \sqrt{\left[\frac{n}2+2\right]\left[\frac{n}2+4\right]}> \sqrt{n+2}>\sqrt{\mu_n},
$$
it follows that the LHS of (\ref{eq:Bessel_8}) is positive, which is absurd. Thus
$$
\mu_n > {\frac{(n+2)(n+4)}{n+6}}.
$$
 \end{proof}

 It is also not known whether the left side of (\ref{eq:BMO_Bound}) can be bounded below by $c\sqrt{n}$ for some absolute constant $c>0$. If it were true, then
 $
 \|\log |x|\|_{\rm BMO} \ge c /\sqrt{n}.
 $
 Unfortunately, we can only show a worse bound
 $$
  \|\log |x|\|_{\rm BMO}\ge \frac{2}{en}.
   $$
   To see this, it suffices to  estimate the mean oscillation of $f:=\log 1/|x|$ over the unit ball $B_1$. A straightforward calculation gives
   $$
   f_{B_1}=\frac{\sigma_n}{|B_1|}\int_0^1 (\log 1/r)r^{n-1}dr=1/n
   $$
   and
   \begin{eqnarray*}
   \int_{B_1}|f-f_{B_1}| & = & \sigma_n \int_0^1 |\log 1/r -1/n| r^{n-1}dr\\
   & = & \sigma_n \int_0^\infty |s-1/n|e^{-ns} ds\\
   & = & \frac{2\sigma_n}{e n^2}.
   \end{eqnarray*}
   It follows immediately that
   $$
   \|f\|_{\rm BMO}\ge \frac1{|B_1|} \int_{B_1} |f-f_{B_1}|\ge \frac2{en}.
   $$

   \begin{problem}
   What is the actual behavior of\/ $\|\log |x|\|_{\rm BMO}$ as $n\rightarrow \infty$?
      \end{problem}

      \section{Plurisubharmonic functions and BMO}

    The proof of Theorem \ref{th:BMO_PSH} is based on a number of lemmata. Let $SH^-$ (resp. $PSH^-$) denote the set of negative subharmonic (resp. psh) functions.

  \begin{lemma}\label{lm:BMO_PSH_1}
  Let $B_R=\{z\in {\mathbb C}:|z|<R\}$ with $R\le 1/2$. If $\psi\in SH^-({B_{4R}})$, then
  \begin{equation}\label{eq:BMO_PSH_1}
  \frac1{|B|}\int_B |\psi-\psi_B|\le C_0 R^{-2} {|\log R|} \int_{B_{3R}}(1+\psi^2)
  \end{equation}
  for all balls $B\subset B_{R}$. Here $C_0>0$ is an absolute constant.
  \end{lemma}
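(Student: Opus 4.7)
The plan is to combine Proposition~\ref{prop:Laplace} with the classical Riesz decomposition of subharmonic functions in dimension two. Setting $\mu := \Delta\psi/(2\pi) \ge 0$, we split $\psi = u + h$ on $B_{2R}$, where
$$
u(z) := \frac{1}{2\pi}\int_{B_{2R}}\log|z-w|\,d\mu(w)
$$
is the logarithmic potential of $\mu|_{B_{2R}}$, so that $h := \psi - u$ is harmonic on $B_{2R}$ (since $\Delta u = 2\pi\mu = \Delta\psi$ there in the distributional sense). We then estimate the mean oscillation of $u$ and $h$ separately on an arbitrary ball $B = B(x_0,r) \subset B_R$ and add the contributions.

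The first step is to bound the Riesz mass $\mu(B_{2R})$. Choose a cutoff $\phi \in C^\infty_0(B_{5R/2})$ with $\phi \equiv 1$ on $B_{2R}$ and $|\nabla\phi| \le 3/R$. Proposition~\ref{prop:Laplace} applied on $\Omega = B_{4R}$ yields
$$
\mu(B_{2R}) \;\le\; \frac{1}{2\pi}\int_{B_{4R}}\phi^2\Delta\psi \;\le\; \frac{C}{R^2}\int_{B_{3R}}(1+\psi^2).
$$
For the oscillation of the potential $u$, note that for every $w$, $\log|\cdot - w|$ is a translate of the classical BMO function $\log|z|$ on $\R^2$, hence has BMO norm bounded by an absolute constant uniformly in $w$. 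Fubini then gives
$$
\frac{1}{|B|}\int_B |u-u_B|\,dz \;\le\; \frac{1}{2\pi}\int_{B_{2R}}\|\log|\cdot - w|\|_{\rm BMO}\,d\mu(w) \;\le\; C\mu(B_{2R}) \;\le\; \frac{C}{R^2}\int_{B_{3R}}(1+\psi^2).
$$

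For the oscillation of the harmonic part $h$, Cauchy's interior gradient estimate on $B(x_0,R/2) \subset B_{2R}$ gives $\sup_{B_R}|\nabla h| \le CR^{-3}\|h\|_{L^1(B_{2R})}$; combined with $|h-h_B| \le 2r\sup_B|\nabla h|$ and $r \le R$, we get
$$
\frac{1}{|B|}\int_B|h-h_B| \;\le\; \frac{C}{R^2}\|h\|_{L^1(B_{2R})}.
$$
It remains to bound $\|h\|_{L^1(B_{2R})} \le \|\psi\|_{L^1(B_{2R})} + \|u\|_{L^1(B_{2R})}$. The first term is at most $\frac{1}{2}\int_{B_{3R}}(1+\psi^2)$ via $|t|\le(1+t^2)/2$. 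For the second, Fubini gives
$$
\|u\|_{L^1(B_{2R})} \;\le\; \frac{\mu(B_{2R})}{2\pi}\sup_{w \in B_{2R}}\int_{B(w,4R)}|\log|z-w||\,dz \;\le\; CR^2|\log R|\,\mu(B_{2R}),
$$
using the elementary calculation $\int_0^{4R}|\log s|\,s\,ds \le CR^2|\log R|$ valid for $R \le 1/2$. Combined with the mass bound, this yields $\|h\|_{L^1(B_{2R})} \le C|\log R|\int_{B_{3R}}(1+\psi^2)$.

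Adding the two oscillation estimates and absorbing the $R^{-2}$ term into the $R^{-2}|\log R|$ term (which is legitimate since $|\log R| \ge \log 2$ for $R \le 1/2$) gives the desired inequality. The only genuinely delicate point is the appearance of the $|\log R|$ factor in the bound for $\|u\|_{L^1(B_{2R})}$: it traces to the $L^1$ growth of the logarithmic kernel on a disc of radius $R$ and reflects the intrinsic logarithmic singularity of the Green kernel in two dimensions, which is also precisely where the hypothesis $R \le 1/2$ is used.
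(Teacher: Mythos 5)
Your proof is correct, and it follows the same basic strategy as the paper — Riesz decomposition of $\psi$ on $B_{2R}$, a mass bound $\Delta\psi(B_{2R})\le CR^{-2}\int_{B_{3R}}(1+\psi^2)$ via Proposition \ref{prop:Laplace}, and the BMO property of $\log|z|$ to handle the logarithmic potential — but it diverges in how the harmonic remainder is treated. The paper decomposes with the Green function $g_R$ of $B_{2R}$, so its harmonic part is the \emph{least harmonic majorant} $h$ with $\psi\le h\le 0$; the sign condition makes $-h$ a nonnegative harmonic function dominated by $-\psi$, and the mean value property then gives a sup bound $\|h\|_{L^\infty(B_R)}\le C R^{-2}\int_{B_{3R}}(1+\psi^2)$ with no further work, the factor $|\log R|$ entering only through the sup norm of the Green correction kernel $\log\frac{2R}{|4R^2-z\bar\zeta|}$. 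You instead use the bare Newtonian potential, so your harmonic remainder $h=\psi-u$ carries no sign; you compensate by controlling $\|h\|_{L^1(B_{2R})}$ through $\|\psi\|_{L^1}$ and $\|u\|_{L^1}$ and then invoking interior gradient estimates, and in your version the $|\log R|$ arises from the $L^1$ norm of the log kernel on a disc of radius comparable to $R$. Both routes are sound and give the stated bound; the paper's choice of the Green function buys the sign trick and spares the estimate of $\|u\|_{L^1(B_{2R})}$ and the gradient estimate for $h$, while yours avoids introducing the Green function and its correction term at the cost of those two (elementary and correctly executed) extra steps.
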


  \begin{proof}
 Applying Proposition \ref{prop:Laplace} with  $\phi\in C^\infty_0({B_{3R}})$ such that $\phi|_{B_{2R}}=1$ and $|\nabla \phi|\le C_0/R$, we  conclude that
    \begin{equation}\label{eq:BMO_PSH_2}
   \int_{B_{2R}} \Delta\psi\le {C_0}{R^{-2}}  \int_{B_{3R}}(1+\psi^2).
  \end{equation}
  Recall that the (negative) Green function of $B_{2R}$ is given by
  $$
  g_R(z,w)=\log |z-w|+\log \frac{2R}{|4R^2-z\bar{w}|}.
  $$
  For any $z\in B_{2R}$, the Riesz decomposition theorem (cf. \cite{HormanderConvexity}, Theorem 3.3.6) gives
  \begin{eqnarray*}
   \psi(z) & = & \frac1{2\pi}\int_{B_{2R}} g_R(z,\zeta)\Delta \psi(\zeta) +h(z)\\
   &  = & \frac1{2\pi} \int_{B_{2R}} \log |z-\zeta| \Delta \psi(\zeta)+\frac1{2\pi}\int_{B_{2R}} \log \frac{2R}{|4R^2-z\bar{\zeta}|}\Delta \psi(\zeta) + h(z)\\
   & = :& u(z)+v(z)+h(z)
  \end{eqnarray*}
  where $h$ is the\/ {\it smallest}\/ harmonic majorant of $\psi$, which naturally satisfies
  $$
  \psi\le h\le 0.
  $$
  It is easy to see that
  \begin{equation}\label{eq:BMO_PSH_3}
   \|v\|_{L^\infty(B_R)}\le C_0 {|\log R|} \int_{B_{2R}} \Delta\psi\le C_0 R^{-2} {|\log R|}  \int_{B_{3R}}(1+\psi^2).
  \end{equation}
  Since $h\le 0$ is harmonic on $B_{2R}$, it follows from the mean-value property that for $z\in B_R$
  \begin{eqnarray}\label{eq:BMO_PSH_4}
  -h(z) & = &¡¡\frac{1}{|B(z,R)|}\int_{B(z,R)} (-h)\nonumber\\
   & \le & C_0 R^{-2} \int_{B_{2R}} (-\psi)\nonumber\\
   & \le & C_0 R^{-2} \int_{B_{3R}}(1+\psi^2).
  \end{eqnarray}
  For any ball $B\subset B_R$ and $z\in B$, we have
  \begin{eqnarray*}
  2\pi [u(z)-u_B] & = & \int_{B_{2R}}\log |z-\zeta|\, \Delta\psi(\zeta)-\frac1{|B|}\int_{w\in B} \int_{\zeta\in B_{2R}} \log |w-\zeta|\,\Delta\psi(\zeta)\\
   & = & \int_{B_{2R}}[\log |z-\zeta|-(\log |\cdot-\zeta|)_B] \Delta\psi(\zeta),
  \end{eqnarray*}
  so that
  \begin{eqnarray}\label{eq:BMO_PSH_5}
   \frac1{|B|}\int_B |u-u_B| & \le & \frac1{2\pi}\|\log |z|\|_{\rm BMO}  \int_{B_{2R}} \Delta\psi\nonumber\\
   & \le & {C_0}{R^{-2}}  \int_{B_{3R}}(1+\psi^2).
  \end{eqnarray}
  Clearly, (\ref{eq:BMO_PSH_3})-(\ref{eq:BMO_PSH_5}) imply (\ref{eq:BMO_PSH_1}).
  \end{proof}

  Given $a\in {\mathbb C}^n$ we define the polydisc
  $$
  P(a,R)=\{z\in {\mathbb C}^n: \max_j |z_j-a_j|<R\}.
  $$

  \begin{lemma}\label{lm:BMO_PSH_2}
   For any $\alpha\ge 1$, there exists a constant $C_{n,\alpha}$ depending only on $n,\alpha$ such that
      $$
   \int_{|z_1|<R}\int_{z'\in P(0',r)}|\psi(z_1,z')|^\alpha \le C_{n,\alpha} r^{2n-2} \int_{|z_1|<R} |\psi(z_1,0')|^\alpha
   $$
   for all $\psi\in PSH^-(\overline{P(0,4R)})$ and $r<R$.
  \end{lemma}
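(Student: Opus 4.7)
The plan is to reduce the lemma to a one-variable slice inequality by induction on complex dimension, then prove the one-variable case by the Riesz decomposition. By Fubini, the lemma will follow once we establish, for each fixed $|z_1|<R$, the slice bound
\[
\int_{P(0',r)}|\psi(z_1,z')|^\alpha\,dz' \le C_{n-1,\alpha}\,r^{2n-2}\,|\psi(z_1,0')|^\alpha,
\]
after which integrating over $|z_1|<R$ yields the lemma.

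For an integer $k\ge 1$, let $(S_k)$ denote the statement: if $u\in PSH^-(P_k(0,4R))$ and $r<R$, then $\int_{P_k(0,r)}|u|^\alpha\le C_{k,\alpha}\,r^{2k}\,|u(0)|^\alpha$. The slice bound is $(S_{n-1})$ applied to $u(z')=\psi(z_1,z')$, so it suffices to prove $(S_k)$ for all $k\ge 1$, which I would do by induction on $k$. For the inductive step, split $z=(z_1,z'')\in\C\times\C^{k-1}$. Since $u(z_1,\cdot)\in PSH^-(P_{k-1}(0,4R))$ for each $|z_1|<4R$, the inductive hypothesis $(S_{k-1})$ yields $\int_{P_{k-1}(0,r)}|u(z_1,z'')|^\alpha\,dz''\le C r^{2(k-1)}|u(z_1,0)|^\alpha$. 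Integrating over $|z_1|<r$, and then applying $(S_1)$ to the subharmonic function $u(\cdot,0)$ on $|z_1|<4R$, produces $(S_k)$.

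The heart of the argument is the base case $(S_1)$. For $u\in SH^-(|z|<4R)$, I would apply the Riesz decomposition on $D(0,2R)$:
\[
u(z)=h(z)-\int_{|\zeta|<2R}g_{2R}(z,\zeta)\,d\mu(\zeta),
\]
where $g_{2R}(z,\zeta)=\log\frac{|4R^2-z\bar\zeta|}{2R|z-\zeta|}\ge 0$ is the positive Green function of $D(0,2R)$, $\mu=\Delta u/(2\pi)\ge 0$ is the Riesz mass, and $h$ is the least harmonic majorant. Since $u\le 0$ on the boundary, the maximum principle gives $h\le 0$, so $|u|=|h|+\int g_{2R}\,d\mu$ and $|u(0)|=|h(0)|+\int g_{2R}(0,\zeta)\,d\mu(\zeta)$. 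The elementary inequality $(a+b)^\alpha\le 2^{\alpha-1}(a^\alpha+b^\alpha)$ splits the task into a harmonic and a Green-potential piece. The harmonic piece is handled by Harnack's inequality applied to $-h\ge 0$ on $D(0,2R)$: since $r/(2R)<1/2$, we obtain $\sup_{|z|<r}|h|\le C|h(0)|$, hence $\int_{|z|<r}|h|^\alpha\,dA\le Cr^2|h(0)|^\alpha$. For the Green-potential piece, Minkowski's integral inequality reduces the task to the uniform pointwise bound
\[
\int_{|z|<r}g_{2R}(z,\zeta)^\alpha\,dA(z)\le C_\alpha\,r^2\,g_{2R}(0,\zeta)^\alpha,\qquad \zeta\in D(0,2R).
\]

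This uniform estimate is the main technical obstacle. For $|\zeta|\le r$, the log-singularity at $\zeta$ lies inside $D(0,r)$ and a direct polar computation gives $\int\lesssim r^2\log^\alpha(R/r)$, which is absorbed by $g_{2R}(0,\zeta)^\alpha=\log^\alpha(2R/|\zeta|)\ge\log^\alpha(2R/r)$. For $|\zeta|>r$, I would use the Möbius representation $g_{2R}(z,\zeta)=-\log|\phi_\zeta(z)|$ with $\phi_\zeta(z)=2R(z-\zeta)/(4R^2-z\bar\zeta)$ and the Schwarz--Pick identity
\[
\frac{|\phi_\zeta(z)-\phi_\zeta(0)|}{|1-\overline{\phi_\zeta(0)}\phi_\zeta(z)|}=\frac{|z|}{2R},
\]
which combined with $|\phi_\zeta(0)|=|\zeta|/(2R)$ yields the circle-uniform lower bound $|\phi_\zeta(z)|\ge 2R(|\zeta|-|z|)/(4R^2+|z||\zeta|)$. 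Integrating the resulting upper bound on $g_{2R}^\alpha$ against $t\,dt$ on $[0,r]$, and treating separately the regimes $|\zeta|\gg r$ (where the upper bound is essentially $g_{2R}(0,\zeta)^\alpha$ throughout) and $|\zeta|$ close to $r$ (where the logarithmic blowup at the outer rim is absorbed by the weight $t\,dt$), yields the desired estimate with constant depending only on $\alpha$.
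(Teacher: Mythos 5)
Your proposal is correct, but it takes a longer, self-contained route where the paper simply quotes a known one-variable estimate and rescales. The paper's proof is three lines: it cites the inequality $\int_{|z|<1/2}|u|^\alpha\le C_\alpha|u(0)|^\alpha$ for negative subharmonic functions near the closed unit disc (H\"ormander, \emph{Notions of Convexity}, p.~230 --- this is exactly your $(S_1)$ in normalized form), iterates it in the last $n-1$ variables only, leaving the $z_1$-integration untouched, and then obtains the asymmetric scales $R$ and $r$ in one stroke via the anisotropic substitution $u(z)=\psi(2Rz_1,2rz')$, which immediately produces the factor $r^{2n-2}$. You instead prove the one-variable bound from scratch (Riesz decomposition, Harnack for the harmonic majorant, Minkowski's integral inequality plus a uniform estimate $\int_{|z|<r}g_{2R}(z,\zeta)^\alpha\,dA\le C_\alpha r^2\,g_{2R}(0,\zeta)^\alpha$ for the Green potential), run an induction on the dimension for the isotropic statement $(S_k)$, and recover the $R$-versus-$r$ anisotropy by Fubini slicing in $z_1$ at the top level; this is logically equivalent to the paper's iteration-plus-rescaling, just organized differently. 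Your uniform Green-function estimate, which you rightly identify as the main technical point, is in fact true for all $\zeta\in D(0,2R)$: when $|\zeta|\ge 2r$ (or more generally when $|\zeta|-r$ is comparable to $|\zeta|$) Harnack on $D(0,|\zeta|)$ gives $g_{2R}(z,\zeta)\le C\,g_{2R}(0,\zeta)$ on $|z|\le r$, and when $|\zeta|<2r$ one has $g_{2R}(0,\zeta)\ge\log(R/r)$ together with the crude bound $g_{2R}(z,\zeta)\le\log\bigl(3R/|z-\zeta|\bigr)$, whose $\alpha$-th power integrates to at most $C_\alpha r^2\bigl(1+\log^\alpha(R/r)\bigr)$; note this last quantity is what your intermediate display should read (your ``$\lesssim r^2\log^\alpha(R/r)$'' cannot be literally right as $r\to R$), but since you absorb it into $\log^\alpha(2R/r)\ge\log^\alpha 2$ rather than $\log^\alpha(R/r)$, the argument goes through and this is only a cosmetic slip. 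The net comparison: your argument is independent of the H\"ormander reference and makes the constant's dependence on $\alpha$ explicit, while the paper's anisotropic rescaling is the slicker device and avoids the Green-function analysis entirely; had you cited the same one-variable inequality, your induction-plus-slicing would collapse to essentially the paper's proof.
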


  \begin{proof}
 We first recall a consequence of the Riesz decomposition theorem that if $u$  is a negative subharmonic function in a neighborhood of the unit closed disc in ${\mathbb C}$, then
  \begin{equation}\label{eq:Hormander}
  \int_{|z|<1/2} |u|^\alpha \le C_\alpha |u(0)|^\alpha
  \end{equation}
  where $C_\alpha>0$ is a constant depending only on $\alpha$ (cf. \cite{HormanderConvexity}, p.\,230). In the case of $n$ complex variables we consider a negative psh function $u$ in $P(0,2)$. Then we have
  \begin{eqnarray*}
   \int_{P(0,1/2)} |u|^p & \le & C_\alpha \int_{|z_1|<1/2}\cdots\int_{|z_{n-1}|<1/2} |u(z_1,\cdots,z_{n-1},0)|^\alpha\\
   & \le & \cdots \le C_\alpha^{n-1} \int_{|z_1|<1/2} |u(z_1,0')|^\alpha.
  \end{eqnarray*}
  We conclude the proof by letting $u(z)=\psi(2R z_1,2r z')$.
  \end{proof}

  \begin{lemma}\label{lm:BMO_PSH_3}
   If $\psi\in SH^-(\overline{P(0,6R)})$ with $R\le 1/3$, then
  \begin{equation}\label{eq:BMO_PSH_6}
  \frac1{|P|}\int_P |\psi-\psi_P|\le C_n  R^{-2} {|\log R|} \sum_{k=1}^n \int_{|z_k|<3R}[1+\psi^2(0,\cdots,0,z_k,0,\cdots,0)]
  \end{equation}
  for any polydisc $P=P(0,r)$  with $r<R$.
  \end{lemma}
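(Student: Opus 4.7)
The plan is to reduce the $n$-dimensional polydisc oscillation of $\psi$ to a sum of one-dimensional slice oscillations along each coordinate direction, apply Lemma \ref{lm:BMO_PSH_1} slice by slice in the $k$-th direction, and then collapse the transverse $(n-1)$-dimensional integration down to the value on the $k$-th coordinate axis by iterating the one-variable H\"ormander estimate \eqref{eq:Hormander}, exactly as in the proof of Lemma \ref{lm:BMO_PSH_2}. I begin with the trivial bound
$$
\frac{1}{|P|}\int_P |\psi-\psi_P| \;\le\; \frac{1}{|P|^{2}} \iint_{P\times P} |\psi(z)-\psi(w)|\,dz\,dw,
$$
together with the telescoping identity $\psi(z)-\psi(w) = \sum_{k=1}^n [\psi(\xi^{(k-1)}) - \psi(\xi^{(k)})]$, where $\xi^{(k)}$ differs from $\xi^{(k-1)}$ only in the $k$-th coordinate (being swapped from $z_k$ to $w_k$). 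This splits the estimate into $n$ terms $I_k$, each measuring the oscillation of $\psi$ in a single complex variable.

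For fixed $k$, the variables that do not appear in the $k$-th integrand can be integrated out, each contributing a factor $|D|=\pi r^2$, so $I_k$ becomes an average over the transverse variables $(w',z'')\in D^{n-1}$ of the mean deviation of the slice $\zeta\mapsto \psi(w',\zeta,z'')$ over the disc $\{|\zeta|<r\}$. Since $\psi$ is psh on $\overline{P(0,6R)}$, each such slice is negative subharmonic on $\{|\zeta|<6R\}\supset \{|\zeta|<4R\}$, so Lemma \ref{lm:BMO_PSH_1} applied with its radius parameter chosen to be our $R$ (and the inner ball chosen to be $\{|\zeta|<r\}\subset \{|\zeta|<R\}$, which is legal since $r<R\le 1/3\le 1/2$) produces the bound
$$
C_0 R^{-2}|\log R|\int_{|\zeta|<3R}\bigl[1+\psi(w',\zeta,z'')^2\bigr]d\zeta
$$
for this inner 1d oscillation.

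A Fubini swap then leaves me to estimate, for each fixed $\zeta$ with $|\zeta|<3R$, the transverse average $\frac{1}{|D|^{n-1}}\int_{P(0',r)}\psi(w',\zeta,z'')^2\,dw'\,dz''$. For such $\zeta$ the map $(w',z'')\mapsto \psi(w',\zeta,z'')$ is negative psh on the $(n-1)$-variable polydisc $P(0',6R)$; rescaling by factor $2r$ and iterating the single-variable estimate \eqref{eq:Hormander} in each of the $n-1$ transverse directions (exactly as in the proof of Lemma \ref{lm:BMO_PSH_2}) bounds this average pointwise by a dimensional constant times $\psi(0,\ldots,0,\zeta,0,\ldots,0)^2$, with $\zeta$ sitting in the $k$-th coordinate. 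Summing over $k$ delivers \eqref{eq:BMO_PSH_6}. The main technical point is the bookkeeping of radii: the factor $3R$ appearing on the right-hand side of \eqref{eq:BMO_PSH_6} arises precisely from invoking Lemma \ref{lm:BMO_PSH_1} at parameter $R$, and the iterated H\"ormander reduction in the transverse variables requires only $4r<6R$, which is guaranteed by the hypothesis $r<R\le 1/3$.
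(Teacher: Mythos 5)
Your proof is correct and follows essentially the same route as the paper: telescope the polydisc oscillation into single-variable oscillations, apply Lemma \ref{lm:BMO_PSH_1} on each one-dimensional slice, and collapse the transverse variables to the $z_k$-axis by the iterated H\"ormander inequality \eqref{eq:Hormander}, which is exactly the content of Lemma \ref{lm:BMO_PSH_2}. The only differences are cosmetic bookkeeping: you telescope $\psi(z)-\psi(w)$ after the doubling trick instead of using successive partial averages, and you run the transverse H\"ormander iteration pointwise in $z_k$ rather than quoting Lemma \ref{lm:BMO_PSH_2} at radius $3R$ (which, if anything, handles the radius bookkeeping more cleanly).
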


  \begin{proof}
  We write
  $
  P=\prod_{j=1}^n B^j
  $
  where $B^j=\{z_j:|z_j|<r\}$. Then for $z\in P$ we have
  \begin{eqnarray*}
  \psi(z)-\psi_P & = & \psi(z_1,z_2,\cdots,z_n)-\psi(\cdot,z_2,\cdots,z_n)_{B^1}+\cdots\\
  && + \psi(\cdots,z_{k},\cdots,z_n)_{B^1\cdots B^{k-1}}- \psi(\cdots,z_{k+1},\cdots,z_n)_{B^1\cdots B^{k}}+\cdots\\
  && +  \psi(\cdots,z_n)_{B^1\cdots B^{n-1}}- \psi_{B^1\cdots B^{n}}
  \end{eqnarray*}
  where
  \begin{eqnarray*}
 &&  \psi(\cdots,z_{k},\cdots,z_n)_{B^1\cdots B^{k-1}}\\
  & = & \frac1{|B^1|\cdots|B^{k-1}|}\int_{\zeta_1\in B^1}\cdots \int_{\zeta_{k-1}\in B^{k-1}}\psi(\zeta_1,\cdots,\zeta_{k-1},z_k\cdots,z_n).
  \end{eqnarray*}
 Since
  \begin{eqnarray*}
  && |\psi(\cdots,z_{k},\cdots,z_n)_{B^1\cdots B^{k-1}}- \psi(\cdots,z_{k+1},\cdots,z_n)_{B^1\cdots B^{k}}|\\
  & \le & |\psi(\cdots,z_{k},\cdots,z_n)- \psi(\cdots,z_{k+1},\cdots,z_n)_{B^{k}}|_{B^1\cdots B^{k-1}},
  \end{eqnarray*}
  it follows that
   \begin{eqnarray*}
  && \frac1{|B^k|} \int_{z_k\in B^k} |\psi(\cdots,z_{k},\cdots,z_n)_{B^1\cdots B^{k-1}}- \psi(\cdots,z_{k+1},\cdots,z_n)_{B^1\cdots B^{k}}|\\
  & \le & \left| \frac1{|B^k|} \int_{z_k\in B^k} \left|\psi(\cdots,z_{k},\cdots,z_n)- \psi(\cdots,z_{k+1},\cdots,z_n)_{B^{k}}\right| \right|_{B^1\cdots B^{k-1}}\\
  & \le &  C_0 R^{-2} {|\log R|} \left[\int_{|z_k|<3R} (1+\psi^2(\cdots,z_{k},\cdots,z_n)) \right]_{B^1\cdots B^{k-1}}
    \end{eqnarray*}
    in view of Lemma \ref{lm:BMO_PSH_1}.
    Thus
    \begin{eqnarray*}
     \frac1{|P|}\int_P |\psi-\psi_P|
     & \le &  C_0 R^{-2} {|\log R|}\sum_{k=1}^n \frac1{|B^1|\cdots|B^{k-1}|}\\
     && \cdot \int_{B^1}\cdots\int_{B^{k-1}}\left[\int_{|z_k|<3R} (1+\psi^2(\cdots,z_{k},\cdots)) \right]_{B^1\cdots B^{k-1}B^{k+1}\cdots B^n}\\
     & \le &  C_n  R^{-2} {|\log R|} \sum_{k=1}^n \int_{|z_k|<3R}[1+\psi^2(0,\cdots,0,z_k,0,\cdots,0)]
    \end{eqnarray*}
  in view of Lemma \ref{lm:BMO_PSH_2}.
  \end{proof}

  We also need the following elementary fact:

  \begin{lemma}\label{lm:BMO_PSH_4}
 Let $V$ be a bounded domain in ${\mathbb C}^n$ and $f\in L^1(V)$. For any domain $W\subset V$ one has
 $$
 \frac1{|W|}\int_W |f-f_W|\le  \frac{2|V|}{|W|}\left[\frac1{|V|}\int_V |f-f_V|\right].
 $$
   \end{lemma}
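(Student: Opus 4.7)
The plan is to use the standard fact that the mean value $f_W$ is, up to a factor of $2$, the best constant approximation to $f$ on $W$ in the $L^1$ sense, together with the trivial observation that $\int_W \le \int_V$ for any nonnegative integrand when $W\subset V$.

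First I would note that for any constant $c\in\mathbb{C}$, the triangle inequality gives
$$
|f-f_W|\le |f-c|+|c-f_W|,
$$
and since $f_W-c=(f-c)_W$, we have $|c-f_W|\le \frac{1}{|W|}\int_W |f-c|$. Integrating over $W$ yields the key bound
$$
\int_W |f-f_W|\le 2\int_W |f-c|
$$
valid for every constant $c$.

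Next I would specialize to $c=f_V$. Then
$$
\int_W |f-f_W|\le 2\int_W |f-f_V|\le 2\int_V |f-f_V|,
$$
where the last step uses $W\subset V$ and nonnegativity of the integrand. Dividing by $|W|$ and multiplying and dividing the right-hand side by $|V|$ gives exactly the claimed inequality
$$
\frac1{|W|}\int_W |f-f_W|\le \frac{2|V|}{|W|}\left[\frac1{|V|}\int_V |f-f_V|\right].
$$

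There is no real obstacle here: the lemma is a purely formal consequence of the near-minimization property of $f_W$. The only thing to be careful about is getting the direction of the triangle inequality right when comparing $|c-f_W|$ to $\frac{1}{|W|}\int_W |f-c|$; otherwise the proof is completely elementary.
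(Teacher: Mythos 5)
Your proof is correct and follows essentially the same route as the paper: the triangle inequality with the comparison constant $f_V$, the mean-value bound $|f_W-f_V|\le \frac1{|W|}\int_W|f-f_V|$, and enlarging the domain of integration from $W$ to $V$. The only cosmetic difference is that you package this as the near-minimization property of $f_W$ for a general constant $c$ before specializing to $c=f_V$, whereas the paper writes the two estimates out directly.
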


  \begin{proof}
  Since
  \begin{eqnarray*}
  |f_W-f_V | & \le &  \frac1{|W|}\int_{W} |f-f_{V}|\\
  & \le &  \frac1{|W|}\int_{V} |f-f_V|\\
  & \le &  \frac{|V|}{|W|}\left[\frac1{|V|}\int_V |f-f_V|\right],
  \end{eqnarray*}
  it follows that
  \begin{eqnarray*}
   \frac1{|W|}\int_{W} |f-f_W| & \le &   \frac1{|W|}\int_{V} |f-f_V|+|f_W-f_V|\\
   & \le & \frac{2|V|}{|W|}\left[\frac1{|V|}\int_V |f-f_V|\right].
  \end{eqnarray*}
  \end{proof}

  \begin{lemma}\label{lm:BMO_PSH_5}
  For any $\psi\in PSH^-(B(0,R))$, there exists a positive number $m$ depending only on $n$ and $(-\psi)_{B(0,R/5)}$ such that there are complex lines $L_1,\cdots,L_n$ which are orthogonal each other and
  $
  L_j\cap S_m\neq \emptyset
  $
  for all $j$, where
   $$
  S_m=\{z\in B(0,R/5):\psi(z)>-m\}.
  $$
  \end{lemma}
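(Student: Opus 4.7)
The plan is a short argument combining Chebyshev, polar coordinates, and a union bound on the unitary group $U(n)$. Set $A := (-\psi)_{B(0,R/5)}$, and for $m>0$ to be chosen, let $E_m := \{z \in B(0,R/5) : \psi(z) \le -m\}$; Markov's inequality gives $|E_m| \le A\,|B(0,R/5)|/m$. Interpreting the $L_j$ as complex lines through the origin with mutually orthonormal direction vectors, my goal is to find an orthonormal basis $(v_1,\ldots,v_n)$ of $\mathbb{C}^n$ such that each radial segment $\{\rho v_j : 0<\rho<R/5\}$ is not entirely contained in $E_m$, which ensures $\mathbb{C}v_j$ meets $S_m$.

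Call a unit vector $v \in S^{2n-1}$ \emph{bad} if $\{\rho v : 0 < \rho < R/5\} \subset E_m$ almost everywhere. Writing $|E_m|$ in polar coordinates on $\mathbb{C}^n \cong \mathbb{R}^{2n}$,
$$|E_m| = \int_{S^{2n-1}} \int_0^{R/5} \mathbf{1}_{E_m}(\rho v)\,\rho^{2n-1}\,d\rho\,d\sigma(v) \ge \sigma(\{\text{bad}\}) \cdot \frac{(R/5)^{2n}}{2n},$$
and since $|B(0,R/5)| = \sigma(S^{2n-1})(R/5)^{2n}/(2n)$, combining with Markov yields
$$\frac{\sigma(\{\text{bad}\})}{\sigma(S^{2n-1})} \le \frac{|E_m|}{|B(0,R/5)|} \le \frac{A}{m}.$$

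To finish, I would draw $U \in U(n)$ from Haar measure. Each $Ue_j$ is uniformly distributed on $S^{2n-1}$, so the union bound gives
$$\Pr[\,\text{some } Ue_j \text{ is bad}\,] \le \frac{nA}{m}.$$
Choosing $m := \max(2nA,1)$ makes this probability less than $1$, so some unitary $U$ has every $v_j := Ue_j$ good. The complex lines $L_j := \mathbb{C}v_j$ are then mutually orthogonal (through the origin) and each meets $S_m$, with $m$ depending only on $n$ and $A = (-\psi)_{B(0,R/5)}$. The only computational step demanding care is the polar-coordinate identity for $|E_m|$, which is routine; the main interpretive issue is what ``mutually orthogonal complex lines'' should mean (through a common point, or just in direction), but the approach adapts to either convention, and in fact plurisubharmonicity is not really needed here---subharmonicity plus the given $L^1$ bound suffices.
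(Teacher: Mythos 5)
Your argument is correct, and it reaches the lemma by a genuinely different (and slicker) route than the paper. The paper first uses Chebyshev to get $|S_m|>\tfrac12|B(0,R/5)|$ for $m>2(-\psi)_{B(0,R/5)}$, then builds an explicit measure-controlled map $F$ with $F(z)\perp z$ (an inversion in the $z_1$-variable on the part of $S_m$ with $|z_1|,|z_2|>c_nR$) whose image, suitably rescaled, has volume comparable to $|S_m|$ and hence must meet $S_m$; this produces $L_1$ together with a point of $S_m$ in $L_1^{\bot}$, after which the sub-mean-value property of $\psi$ restricted to the slice $L_1^{\bot}$ bounds the new average $(-\psi)_{B(0,R/5)\cap L_1^{\bot}}$ by $C_nm$, and induction on dimension yields $L_2,\dots,L_n$ (so plurisubharmonicity is genuinely invoked via restriction to complex slices, and $m$ degrades at each of the $n$ stages). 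You instead observe that the spherical density of ``bad'' directions is at most $(-\psi)_{B(0,R/5)}/m$ via Markov plus polar coordinates, and then average over a Haar-random unitary frame: since each $Ue_j$ is uniform on $S^{2n-1}$, a union bound with $m=\max\{2n(-\psi)_{B(0,R/5)},1\}$ produces all $n$ orthogonal lines through the origin in one shot. This is exactly the point where the naive greedy choice fails (good directions orthogonal to $v_1$ lie on a measure-zero subsphere), and the random-frame trick circumvents it cleanly; it also gives an explicit, better constant and, as you note, uses nothing about $\psi$ beyond negativity and integrability on $B(0,R/5)$. Your lines pass through the origin and are mutually orthogonal, which is precisely the form needed in the proof of Theorem \ref{th:BMO_PSH}, where one rotates them to the coordinate axes; the only small points to tidy in a final write-up are the measurability of the bad set (immediate from Fubini, since $\psi$ is upper semicontinuous) and the remark that a non-bad direction yields some $\rho\in(0,R/5)$ with $\rho v_j\in S_m$ because the good radii form a set of positive measure.
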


  \begin{proof}
  Set $S_m^c=B(0,R/5)-S_m$. By Chebychev's inequality, we have
  $$
  \int_{B(0,R/5)} |\psi|\ge m |S_m^c|,
  $$
  so that
  \begin{eqnarray}\label{eq:VolumeEstimate}
  |S_m|  \ge  |B(0,R/5)|-\frac1m \int_{B(0,R/5)} |\psi|
   >  \frac12|B(0,R/5)|
  \end{eqnarray}
  provided
  $$
  m>2(-\psi)_{B(0,R/5)}.
  $$
  We  choose a number $0<c_n< 1/5$ such that
  $$
  \left|\{z\in B(0,R/5):|z_1|<c_nR\}\right|+|\{z\in B(0,R/5):|z_2|<c_nR\}|<\frac14|B(0,R/5)|.
  $$
  Set
  $$
  S_m'=S_m\cap \{z:\min\{|z_1|,|z_2|\}>c_n R\}.
  $$
  Then we have
  $$
  |S_m'|\ge |S_m|-\frac14|B(0,R/5)|>\frac12 |S_m|.
  $$
  We define a smooth homeomorphism $F$ on $S_m'$ as follows: $w_j=z_j$ for $j>1$ and
  $$
  w_1=-\frac1{\bar{z}_1}(|z_2|^2+\cdots+|z_n|^2).
  $$
  Clearly, the vector $F(z)$ is orthogonal to $z$ in ${\mathbb C}^n$ and
  $$
  |F(z)|\le \frac{|z|^2}{|z_1|}< (25c_n)^{-1}R,
  $$
  i.e. $(5c_n)\cdot F(z)\in B(0,R/5)$.
    Since the real Jacobian $J_{\mathbb R}(F)$ of $F$ satisfies
  $$
  J_{\mathbb R}(F)(z)=-\frac1{|z_1|^4}(|z_2|^2+\cdots+|z_n|^2)^2
  $$
  for $z\in S_m'$, it follows that
  $$
  |(5c_n)\cdot F(S_m')|\ge (5 c_n)^5 |S_m'|\ge \frac12 (5 c_n)^5 |S_m|.
  $$
  Thus if we choose
  $$
   m>\frac{1+\frac12 (5 c_n)^5}{\frac12 (5 c_n)^5}(-\psi)_{B(0,R/5)}
  $$
  so that $|S_m|>[1+\frac12 (5 c_n)^5]^{-1} |B(0,R/5)|$, then $S_m\cap ((5c_n)\cdot F(S_m'))\neq \emptyset$. In other words, there exists a complex line $L_1$ such that both $L_1$ and its orthogonal complement $L_1^\bot$ in ${\mathbb C}^n$ intersect $S_m$. Suppose $a\in S_m\cap L^\bot_1$. It follows from the sub mean-value property that
  $$
  \int_{B(a,4R/5)\cap L^\bot_1} |\psi|\le |B(a,4R/5)\cap L^\bot_1| |\psi(a)|.
  $$
  Since $B(0,R/5)\subset B(a,4R/5)$, we have
   $$
  \frac1{|B(0,R/5)\cap L^\bot_1|}\int_{B(0,R/5)\cap L^\bot_1} |\psi|\le C_n m.
  $$
  By repeating the previous argument, we obtain the remaining complex lines $L_2,\cdots,L_n$.
  \end{proof}

  \begin{proof}[Proof of Theorem \ref{th:BMO_PSH}]
  We consider a ball $B(a,R)\subset\subset \Omega$. Replacing $\psi$ by $\psi-C$ where $C$ is a sufficiently large constant, we may assume that $\psi<0$ in a neighborhood of $\overline{B(a,R)}$. We may also assume $a=0$. Let $L_j$, $1\le j\le n$, be chosen as Lemma \ref{lm:BMO_PSH_5}. By a rotation, we may assume that $L_j=\{z:z_1=\cdots =z_{j-1}=z_{j+1}=\cdots=z_n=0\}$. By Lemma \ref{lm:BMO_PSH_3} and Lemma \ref{lm:BMO_PSH_4}, we see that for  $r<R/30$
  $$
   \frac1{|B(0,r)|}\int_{B(0,r)} |\psi-\psi_{B(0,r)}|\le C_n  R^{-2} {|\log R|} \sum_{k=1}^n \int_{|z_k|<R/5}[1+\psi^2(0,\cdots,0,z_k,0,\cdots,0)].
  $$
  Let $b^{(k)}=(0,\cdots,0,b_k,0,\cdots,0)\in L_k\cap S_m$. It follows from (\ref{eq:Hormander}) that
  $$
  \int_{|z_k-b_k|<2R/5}|\psi(0,\cdots,0,z_k,0,\cdots,0)|^2\le C_0 R^{2} |\psi(b^{(k)})|^2\le C_0 m^2 R^{2}.
  $$
  Thus
  $$
   \int_{|z_k|<R/5}|\psi(0,\cdots,0,z_k,0,\cdots,0)|^2\le C_0 m^2 R^{2}.
  $$
  As $m$ depends only on $n$ and $(-\psi)_{B(0,R/5)}$, we conclude that $\psi\in {\rm BMO}(\Omega,{\rm loc})$.
    \end{proof}

  \begin{problem}
   Let $\Omega$ be a domain in ${\mathbb C}^n$. Suppose $\psi\in PSH(\Omega)$ satisfies $e^{-\psi} \in L^1_{\rm loc}(\Omega)$. Is it possible to conclude that for any domain $\Omega'\subset\subset \Omega$ there exists an $\alpha>1$ and a $C>0$ such that
   $$
   \left[\frac1{|B|}\int_B e^{-\alpha \psi}\right]^{1/\alpha}\le \frac{C}{|B|}\int_B e^{-\psi}
   $$
   for all balls $B\subset \Omega'$?
  \end{problem}

 It is also interesting to find conditions for a function $\psi\in PSH({\mathbb C}^n)$ belonging to ${\rm BMO}({\mathbb C}^n)$. A function $\psi\in PSH({\mathbb C}^n)$ is said to be of minimal growth if
 $$
 \psi(z)-\log |z|\le O(1)\ \ \ {\rm as\ \ \ }|z|\rightarrow \infty.
 $$
 Let ${\mathcal L}$ denote the family of such functions.

 \begin{problem}
 Does one has ${\mathcal L}\subset {\rm BMO}({\mathbb C}^n)$?
 \end{problem}

  \section{Liouville properties of subharmonic functions}

 In order to prove Theorem \ref{th:Liouville_1}, we need the following

  \begin{lemma}\label{lm:itegral_Upperbound}
 Let $f\in L^1_{\rm loc}(M)$. Fix a point $x_0\in M$ and set
 $$
 g(r):=\int_{B(x_0,r)}|f|d\mu.
 $$
 Let $0<r<R<\infty$ and $\varepsilon>0$. There exists a locally Lipschitz function $\phi=\phi_{\varepsilon,r,R}$ with  compact support in $\overline{B}(x_0,R)$ such that $\phi=1$ on $\overline{B}(x_0,r)$ and
 \begin{equation}\label{eq:integralUpperbound}
 \int_M¡¡|f| |\nabla \phi|^2 d\mu\le 2 \left[\int_{r}^R \frac{t-r}{g(t)-g(r)+\varepsilon}dt\right]^{-1}.
 \end{equation}
 \end{lemma}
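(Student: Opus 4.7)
The plan is to take a radial cutoff $\phi(x) = \chi(\rho(x))$ where $\rho(x) := d(x_0, x)$ and $\chi:[0,\infty) \to [0,1]$ is carefully engineered so that its derivative is concentrated where the weighted volume $g$ grows slowly. Set $h(t) := g(t) - g(r) + \varepsilon$ (strictly positive and nondecreasing on $[r,\infty)$) and $I := \int_r^R (t-r)/h(t)\,dt$. I would define
$$
\chi(t) = \begin{cases} 1, & 0 \le t \le r,\\ I^{-1}\int_t^R (s-r)/h(s)\, ds, & r \le t \le R,\\ 0, & t \ge R,\end{cases}
$$
so that $\chi$ is Lipschitz with $\chi'(t) = -(t-r)/(I\, h(t))$ on $(r,R)$. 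Since $\rho$ is $1$-Lipschitz, $\phi$ is then locally Lipschitz, equal to $1$ on $\overline{B}(x_0,r)$, supported in $\overline{B}(x_0,R)$ (compact under the tacit completeness hypothesis of Theorem \ref{th:Liouville_1}), and $|\nabla \phi| \le |\chi'(\rho)|$ almost everywhere.

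Next I would reduce the spatial integral to a one-variable Stieltjes integral via the pushforward identity
$$
\int_M |f|\, \Psi(\rho)\, d\mu \;=\; \int_0^\infty \Psi(t)\, dg(t)
$$
valid for nonnegative Borel $\Psi$ (this is just the definition of the push-forward of $|f|d\mu$ under $\rho$, observing $g(t)=\int_{B(x_0,t)} |f|d\mu$). Applied to $\Psi = |\chi'|^2$ it yields
$$
\int_M |f|\, |\nabla\phi|^2\, d\mu \;\le\; \frac{1}{I^2}\int_r^R \frac{(t-r)^2}{h(t)^2}\, dg(t).
$$
Since $dg = dh$ on $(r,\infty)$ and $dh/h^2 = d(-1/h)$ in the Stieltjes sense, integration by parts gives
$$
\int_r^R (t-r)^2\, d(-1/h(t)) \;=\; -\frac{(R-r)^2}{h(R)} + 2\int_r^R \frac{t-r}{h(t)}\, dt \;\le\; 2I.
$$
Dividing by $I^2$ yields $\int_M |f|\,|\nabla\phi|^2\,d\mu \le 2/I$, which is precisely the claimed estimate (\ref{eq:integralUpperbound}).

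The main technical subtlety is justifying the pushforward identity and the Stieltjes integration by parts when $g$ may fail to be absolutely continuous. Since $g$ is monotone nondecreasing it is of bounded variation on any compact interval, so $dg$ is a well-defined finite Borel measure and both identities are then standard. If one prefers to avoid Stieltjes arguments, one can approximate $f$ by a sequence of continuous $f_k \to f$ in $L^1_{\mathrm{loc}}$ so that $g_k$ becomes $C^1$, prove the estimate for each $f_k$ by ordinary calculus, and pass to the limit using monotone convergence together with the monotonicity of the functional $g \mapsto I(g)$.
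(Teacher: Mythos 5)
Your proof is essentially the paper's own argument: the same normalized radial cutoff $\phi=\chi\circ\rho$ with $\chi'(t)=-(t-r)/\bigl(I\,h(t)\bigr)$, the same reduction to a one-variable integral against $dg$, and the same integration by parts, where the boundary term $-(R-r)^2/h(R)$ is discarded to get the bound $2/I$. The only cosmetic difference is that the paper performs the reduction via the co-area formula (using $|\nabla\rho|=1$ a.e.), which in particular shows $g$ is absolutely continuous, so the Stieltjes subtleties you raise about possible jumps of $g$ do not actually arise.
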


 \begin{proof}
  The argument is parallel to  \cite{Grigoryan}, p.\,36.  Set  $\rho(x)={\rm dist\,}(x_0,x)$. Since $|\nabla \rho|=1$ a.e. on $M$, it follows from the Co-Area formula that
  $$
  g(t)=\int_0^t \int_{S(x_0,s)}|f| d\mu'_s dt
  $$
  where $S(x_0,s)=\partial B(x_0,s)$ and $\mu'_s$ is the corresponding Riemannian measure on $S(x_0,s)$. It follows that
  $$
  g'(t)=\int_{S(x_0,t)}|f| d\mu'_t\ \ \ {\rm a.e}.
  $$
 Let $\chi:{\mathbb R}\rightarrow [0,\infty)$ is a Lipschitz function such that $\chi|_{(-\infty,r)}=1$, $\chi|_{[R,\infty)}=0$ and
  $$
  \chi(t)=c \int_t^R \frac{s-r}{g(s)-g(r)+\varepsilon} ds,\ \ \ t\in [r,R],
  $$
  where
  $$
  1/c:=\int_{r}^R \frac{s-r}{g(s)-g(r)+\varepsilon} ds.
  $$
  Set   $\phi:=\chi\circ \rho$. Then we have
    \begin{eqnarray*}
   \int_M¡¡|f| |\nabla \phi|^2 d\mu & \le & \int_{r}^R \chi'(t)^2 g'(t)dt \le c^2\int_{r}^R \left[  \frac{t-r}{g(t)-g(r)+\varepsilon}\right]^2 d g(t)\\
   & = & -\left.\frac{c^2(t-r)^2}{g(t)-g(r)+\varepsilon}\right|_{r}^R+2c^2 \int_{r}^R \frac{t-r}{g(t)-g(r)+\varepsilon}dt\\
   & \le & 2c.
  \end{eqnarray*}
  \end{proof}

 \begin{proof}[Proof of Theorem \ref{th:Liouville_1}]
  For the case (1), we set $\eta(t)=\int_0^t \frac{ds}{\lambda(s)}$. Let $\phi$ be as in Lemma \ref{lm:itegral_Upperbound}. Assume first that $\psi\in C^2(M)$. Since $1/\eta'=\lambda$, we infer from  (\ref{eq:PoincareIneq_mfd}) that
  \begin{eqnarray*}
  &&  \int_{M} |\phi|^2  \frac{\eta'(-\psi)}{\eta^2(-\psi)}{|\nabla\psi|^2} d\mu  \le  4 \int_M \lambda(|\psi|){|\nabla\phi|^2} d\mu\\
  & \le & 8 \left[\int_{r}^R \frac{t-r}{v_\lambda(t)-v_\lambda(r)+\varepsilon}dt\right]^{-1}\ \ \ ({\rm by\ }(\ref{eq:integralUpperbound}))\\
  & \rightarrow & 0\ \ \ \ \ ({\rm as\ }R\rightarrow \infty).
  \end{eqnarray*}
 Since $\phi=1$ on $B(x_0,r)$, it follows that $\nabla \psi\equiv 0$ on $B(x_0,r)$, hence on $M$ since $r$ can be arbitrarily large. Thus $\psi\equiv {\rm const}$.

 If $\psi$ is only continuous, then for fixed $0<r<R$ there exists a sequence of smooth subharmonic functions $\psi_j$ in a neighborhood of $\overline{B(x_0,R)}$ which converges uniformly to $\psi$ in view of the (local) approximation theorem of Greene-Wu \cite{GreeneWu}. We have
 $$
  \int_{M} |\phi|^2  \frac{\eta'(-\psi_j)}{\eta^2(-\psi_j)}{|\nabla\psi_j |^2} d\mu  \le  4 \int_M \lambda(|\psi_j |){|\nabla\phi |^2} d\mu.
   $$
   Thus for $j\ge j_0(r,R)\gg 1$,
   $$
   \int_{B(x_0,r)} |\nabla\psi_j |^2 \le C \int_M \lambda(|\psi_j |){|\nabla\phi |^2} d\mu
   \le 2C \int_M \lambda(|\psi |){|\nabla\phi |^2} d\mu.
     $$
     where $C$ depends  on $r$, $\eta$ and $\psi$, but independent of $R$. By the Poincar\'e inequality, there exists a positive constant $C_r$ depending only the geometry of $\overline{B(x_0,r)}$ such that
     $$
     \int_{B(x_0,r)} |\psi_j-(\psi_j)_{B(x_0,r)}|^2 \le C_r  \int_{B(x_0,r)} |\nabla\psi_j |^2 \le
     2CC_r \int_M \lambda(|\psi |){|\nabla\phi |^2} d\mu.
           $$
  Letting $j\rightarrow \infty$, we have
  $$
  \int_{B(x_0,r)} |\psi-\psi_{B(x_0,r)}|^2 \le
     2CC_r \int_M \lambda(|\psi |){|\nabla\phi |^2} d\mu\rightarrow 0
 $$
 as $R\rightarrow \infty$. It follows that $\psi\equiv {\rm const.}$ on $B(x_0,r)$ for every $r$.

 For the case (2), we set $
  \eta(t):=\left(\int_t^\infty 1/\lambda\right)^{-1}.
  $
  Since $\eta^2/\eta'=\lambda$, it follows from (\ref{eq:PoincareIneq_mfd+}) and (\ref{eq:integralUpperbound})  that if $\psi$ is $C^2$ then
  $$
  \int_{M}\phi^2  \eta'(\psi){|\nabla\psi|^2} d\mu  \le  4 \int_M \lambda(\psi){|\nabla\phi|^2} d\mu\rightarrow 0
  $$
  as $R\rightarrow \infty$. Hence $\nabla \psi \equiv 0$ and $\psi\equiv {\rm const.}$ The general case follows by a similar argument as above.
 \end{proof}

 We remark that Theorem \ref{th:Liouville_1}/(2) contains as a special case of a theorem of Nadirashvili \cite{Nadirashvli} that the condition $\int_M \frac{\lambda(\psi)d\mu}{1+\rho^2}<\infty$, where $\rho:={\rm dist\,}(x_0,\cdot)$ and $\lambda>0$ is a strictly increasing function with $\int_1^\infty \frac{ds}{\lambda(s)}<\infty$, implies that $\psi$ is a constant. To see this, simply note that
 \begin{eqnarray*}
  +\infty & > & \int_M \frac{\lambda(\psi)d\mu}{1+\rho^2}   \ge  \int_0^r \frac{v_\lambda'(t)dt}{1+t^2}\\
                                     & =  & \left.\frac{v_\lambda(t)}{1+t^2}\right|^r_0-\int_0^r v_\lambda(t)\left[\frac1{1+t^2}\right]'dt \\
                                     & \ge & \frac{v_\lambda(r)}{1+r^2},
 \end{eqnarray*}
  which implies that $\int_{1}^\infty \frac{rdr}{v_\lambda(r)}=\infty$.

 \begin{proof}[Proof of Theorem \ref{th:Liouville_2}]
  Suppose first that $\psi\ge 1$. For all $r>r_0\gg 1$, we have
  \begin{eqnarray*}
   v_\lambda (r) & := &  \int_{B(x_0,r)} \lambda(\psi)d\mu
    =   \int_{\rho<r_0} \lambda(\psi) d\mu+ \int_{r_0\le \rho\le r} \lambda(\psi) d\mu.
     \end{eqnarray*}
 Since $\lambda,\kappa$ are increasing and $\psi\le \kappa\circ \rho$ for $\rho\ge r_0\gg 1$, we have
  \begin{eqnarray*}
   \int_{r_0\le \rho\le r} \lambda(\psi) d\mu \le \int_{r_0}^r \lambda(\kappa(s)) dV_s=\lambda(\kappa(s))V_s|_{r_0}^r -\int_{r_0}^r V_s \lambda(\kappa(s))'ds\le \lambda(\kappa(r))V_r.
  \end{eqnarray*}
   Thus
   \begin{eqnarray*}
  \int_{r_0}^\infty \frac{r dr}{v_\lambda (r)}  \ge  {\rm const.} \int_{r_0}^{\infty} \frac{rdr}{\lambda(\kappa(r))V_r} =\infty,
 \end{eqnarray*}
 so that $\psi$ is a constant in view of Theorem \ref{th:Liouville_1}.

 For general $\psi$, we conclude from  the argument above that there exists a constant $C\ge 1$ with $\max\{\psi,1\}=C$, so that $\psi\le C$. Note that the condition (2) implies that $\int_{1}^\infty \frac{rdr}{V_r}=\infty$, which implies that $M$ is parabolic (cf. \cite{Grigoryan}, Theorem 7.3), so that $\psi$ has to be a constant.
 \end{proof}

 We close this section by posing two questions.

  \begin{problem}
  Let $M$ be a complete Riemannian manifold of finite volume. Is it possible to conclude that a continuous subharmonic function satisfying
        $
  \psi(x)\le o(\rho(x)^2)
  $
as $\rho(x)\rightarrow \infty$ is a constant?
  \end{problem}

  \begin{problem}
  Let $M$ be a complete Riemannian manifold such that $V_r\le {\rm const.} r^2$ for all large $r$. Is it possible to conclude that a continuous subharmonic function satisfying
     $
  \psi(x)\le o(\log \rho(x))
  $
as $\rho(x)\rightarrow \infty$  is a constant?
  \end{problem}

   \section{Proof of Theorem \ref{th:Zariski}}

  We use the method of invariant distances developed by the author in \cite{Chen}. Suppose that $\Omega$ is a bounded domain in ${\mathbb C}^n$ and $G$ is a free, properly discontinuous group of holomorphic automorphims on $\Omega$ such that $M:=\Omega/G$ is a Zariski open subset in a projective algebraic variety $\overline{M}$. By Hironaka's theorem \cite{Hironaka}, we may assume that $S:=\overline{M}\backslash M$ is a divisor with simple normal crossings, which means that for any $a\in S$ there is a coordinate polydisc neighborhood ${\mathbb D}^n$ such that ${\mathbb D}^n\backslash S=({\mathbb D}^\ast)^{l}\times {\mathbb D}^{n-l}$ where ${\mathbb D}^\ast={\mathbb D}\backslash \{0\}$. We call the metric
   $$
   \frac{i}2 \left[\sum_{k=1}^l \frac{dz_k\wedge d\bar{z}_k}{|z_k|^2\log^2|z_k|^2}+\sum_{k=l+1}^n \frac{4dz_k\wedge d\bar{z}_k}{(1-|z_k|^2)^2}\right]
   $$
   the Poincar\'e metric on $({\mathbb D}^\ast)^{l}\times {\mathbb D}^{n-l}$.

   Let $S=\bigcup_{j=1}^N S_j$ where each $S_j$ is non-singular. Let $[S_j]$ be the line bundle on $\overline{M}$ associated to $S_j$ and $\sigma_j$ a holomorphic section of $[S_j]$ generating $S_j$. We may choose a Hermitian metric on $[S_j]$ so that the associated norm $\|\sigma_j\|$ of $\sigma_j$ is less than 1.  Fix a K\"ahler metric ${\omega_0}$ on $\overline{M}$. It is well-known that for $C\gg 1$
  $$
  \omega:=C\omega_0-\sum_{j=1}^N \frac{i}2\partial\bar{\partial}\log(-\log \|\sigma_j\|^2)
  $$
  gives a complete K\"ahler metric of finite volume on $M$, which possesses the singularity of the Poincar\'e metric near every point on $S$ (see e.g. \cite{GriffithsBook}).

  Recall that the Poincar\'e hyperbolic distance of ${\mathbb D}$ is given by
  $$
  d_{{\rm hyp}}(z_1,z_2)= \log \frac{1+\left|\frac{z_1-z_2}{1-\bar{z}_2 z_1}\right|}{1-\left|\frac{z_1-z_2}{1-\bar{z}_2 z_1}\right|},\ \ \ z_1,z_2\in {\mathbb D}.
  $$
  A crucial property is that $\log  d_{{\rm hyp}}$ is a psh function on ${\mathbb D}\times {\mathbb D}$ which is strictly psh on off-diagonal points (cf. \cite{JarnickiPflug}, Proposition 1.18; see also \cite{Chen}, Lemma 8.1). As the Carath\'eodory distance between $z,z'\in \Omega$ is given by
  $$
  c_\Omega(z,z'):=\sup_f\left\{d_{\rm hyp}(f(z),f(z'))\right\}
  $$
  where the supremum is taken over all holomorphic mappings $f:\Omega \rightarrow {\mathbb D}$, it follows that $\log c_\Omega$ is a psh function on $\Omega\times \Omega$.

  Suppose on the contrary that the center of $G$ is nontrivial, i.e. there exists ${\rm id}\neq T_0\in G$ such that $T_0 T=T T_0$ for every $T\in G$. Set $\psi=\log c_\Omega(z,T_0(z)),\,z\in \Omega$. Since $\psi$ is the pull-back of $\log c_\Omega\in PSH(\Omega\times \Omega)$ by the holomorphic map $z\mapsto (z,T_0(z))$, it follows that $\psi\in PSH(\Omega)$. It is also easy to see that $e^\psi$ is a continuous function. We claim that $\psi$ is not a constant. To see this, first take $z_0\in \Omega$ with $T_0(z_0)\neq z_0$ (so that $c_\Omega(z_0,T_0(z_0))>0$) then take a holomorphic mapping $f:\Omega\rightarrow {\mathbb D}$ with $c_\Omega(z_0,T_0(z_0))=d_{\rm hyp}(f(z_0),f(T_0(z_0)))$ (so that $f$ is nonconstant); if $\psi\equiv {\rm const}$, then $\varphi:=\log d_{\rm hyp}(f(z),f(T_0(z)))$ has to be a constant since it attains the maximum at $z_0\in \Omega$, while $\log d_{\rm hyp}$ is strictly psh near $(f(z_0),f(T_0(z_0)))$ and $f$ is nonconstant, which is impossible (compare \cite{Chen}, P.\,1046).

  Since
  $$
  \psi(T(z))=\log c_\Omega(T(z),T_0T(z))=\log c_\Omega(T(z),TT_0(z))=\psi(z)
  $$
  for every $T\in G$, it follows that $\psi$ descends to a nonconstant psh function $\tilde{\psi}$ on $M$ such that $e^{\tilde{\psi}}$ is continuous. Since every psh function is subharmonic w.r.t. any K\"ahler metric on $M$, we obtain from Theorem \ref{th:Liouville_2} (and the subsequent remark) that $\tilde{\psi}\equiv {\rm const.}$ (contradictory) if we can verify
  \begin{equation}\label{eq:Margulis}
   \tilde{\psi}\le {\rm const.}\log (1+\rho_\omega)
  \end{equation}
  where $\rho_\omega$ is the distance (w.r.t. $\omega$) from a fixed point $x_0$.

  Let $k_\Omega$ (resp. $k_M$) be the Kobayashi distance on $\Omega$ (resp. $M$). Since the Kobayashi metric on $M$ is always dominated by the Poincar\'e metric near every point on $S$, it follows that
  $$
  k_M(x_0,\cdot)\le {\rm const.}(1+\rho_\omega).
  $$
    Let $\varpi:\Omega\rightarrow M$ be the natural projection. Let $x,y\in M$ and $z_x\in \varpi^{-1}(x),\,z_y\in \varpi^{-1}(y)$. It is well-known that
  $$
  k_M(x,y)=\inf_{z_y\in \varpi^{-1}(y)} k_\Omega(z_x,z_y)
  $$
   (cf. \cite{KobayashiBook}, p.\,47). Thus for fixed $z_{x_0}\in \varpi^{-1}(x_0)$ and every $z_{x}\in \varpi^{-1}(x)$, we have
  \begin{eqnarray*}
  e^{\tilde{\psi}(x)} & = &  c_\Omega(z_x,T_0(z_x))\le c_\Omega(z_{x_0},z_x)+c_\Omega(z_{x_0},T_0(z_x))\\
                      & = & c_\Omega(z_{x_0},z_x)+c_\Omega(T_0^{-1}(z_{x_0}),z_x)\\
                      & \le & k_\Omega(z_{x_0},z_x)+k_\Omega(T_0^{-1}(z_{x_0}),z_x),
  \end{eqnarray*}
  so that if $\tilde{x}_0:=\varpi(T_0^{-1}(z_{x_0}))$ then
  $$
  e^{\tilde{\psi}(x)}\le k_M({x_0},x)+k_M(\tilde{x}_0,x)\le {\rm const.}(1+\rho_\omega(x)),
  $$
  which yields (\ref{eq:Margulis}).

\section{Appendix}

 1. We first discuss  an application of Hardy-Sobolev type inequalities to singularity theory of real-analytic functions.
 Let $f:U\rightarrow {\mathbb R}$ be a real-analytic function defined in a neighborhood $U$ of $0\in {\mathbb R}^n$ such that $f(0)=0$. Set $Z_f:=f^{-1}(0)$. {\L}ojasiewicz proved that for any compact set $K$ in $U$, there are constants $\alpha\ge 1$ and $C>0$ such that
\begin{equation}\label{eq:Loja_1}
{\rm dist\,}(x,Z_f)^\alpha\le C |f(x)|
\end{equation}
for all $x\in K$ (see e.g. \cite{Loja64}). Moreover, he showed that there are a smaller neighborhood $V$ of $0$ and constants $0<\beta<1$ and $C>0$ such that
\begin{equation}\label{eq:Loja_2}
|f(x)|^\beta\le C |\nabla f(x)|
\end{equation}
for all $x\in V$. The corresponding\/ {\it {\L}ojasiewicz exponent\/} $\alpha_0(f)$ (resp. $\beta_0(f)$) of $f$ at $0$ is the infimum of $\alpha$ (resp. $\beta$) such that (\ref{eq:Loja_1}) (resp. (\ref{eq:Loja_2})) holds in a neighborhood of $0$.

There is another useful quantity called the\/ {\it singularity exponent\/} $c_0(f)$ of $f$ at $0$, which is defined to be the supremum of $c>0$ such that $|f|^{-c}$ is $L^2$ in a neighborhood of $0$.

Let $d_0(f)$ be the codimension of the real-analytic subvariety $Z_f$ at $0$. We have

\begin{theorem}\label{th:Main}
 Let $f:U\rightarrow {\mathbb R}$ be a real-analytic function defined in a neighborhood $U$ of $0\in {\mathbb R}^n$ such that $f(0)=0$.
If \/ $\log f^2$ is subharmonic, then
 \begin{equation}\label{eq:Singular_1}
 c_0(f)\ge \max\left\{ 1-\beta_0(f), 1-\beta_0(f)+\frac{d_0(f)-2}{2\alpha_0(f)}\right\}.
 \end{equation}
 \end{theorem}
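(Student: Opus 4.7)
The plan is to exploit the hypothesis that $\psi:=\log f^2$ is a subharmonic function on a neighborhood $U$ of $0$ by applying Proposition~\ref{prop:PoincareIneq} to $\psi$ (after shrinking $U$ so that $\log f^2<-\gamma<0$), and then converting the resulting weighted Hardy-type inequalities into integrability bounds on $|f|^{-2c}$ via the two {\L}ojasiewicz inequalities (\ref{eq:Loja_1}) and (\ref{eq:Loja_2}).

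First I would prove the weaker bound $c_0(f)\ge 1-\beta_0(f)$. Taking $\eta(t)=t$ in Proposition~\ref{prop:PoincareIneq} (the case (\ref{eq:PoincareIneq_Log})) and substituting $|\nabla\psi|^2=4|\nabla f|^2/f^2$ yields
\[
\int_U \phi^2\,\frac{|\nabla f|^2}{f^2(\log f^2)^2}\,\le\,\int_U |\nabla\phi|^2,\qquad\phi\in W^{1,2}_0(U).
\]
For any $\beta>\beta_0(f)$, (\ref{eq:Loja_2}) gives $|\nabla f|\ge c|f|^\beta$ near $0$, so with a standard cut-off $\phi=\chi\in C_0^\infty(U)$ the inequality forces $\int \chi^2|f|^{-2(1-\beta)}/(\log f^2)^2<\infty$. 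Since $|f|^{2\delta}(\log f^2)^2$ is bounded on $\{|f|<1\}$ for every $\delta>0$, this yields $|f|^{-2(1-\beta-\delta)}\in L^1_{\rm loc}$ at $0$, hence $c_0(f)\ge 1-\beta-\delta$; sending $\beta\downarrow\beta_0(f)$ and $\delta\downarrow 0$ gives the first bound.

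For the improved bound (nontrivial only when $d_0(f)>2$), I would replace the smooth cut-off by the singular test function $\phi=\chi\cdot d(\cdot,Z_f)^{-s}$ with $0<s<(d_0(f)-2)/2$, so that $\phi$ concentrates on $Z_f$. Since $d(\cdot,Z_f)$ is $1$-Lipschitz and the codimension of the real-analytic set $Z_f$ at $0$ is $d_0(f)$, the function $d(\cdot,Z_f)^{-\gamma}$ is locally $L^1$ near $0$ for every $\gamma<d_0(f)$; in particular $d(\cdot,Z_f)^{-2s}$ and $d(\cdot,Z_f)^{-2s-2}$ are locally integrable, so a standard truncation $\phi_k=\chi\cdot\min(d(\cdot,Z_f)^{-s},k)$ shows $\phi\in W^{1,2}_0(U)$. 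Inserting $\phi$ into the Hardy-type inequality above and invoking (\ref{eq:Loja_2}) exactly as before produces
\[
\int_U \chi^2\,d(\cdot,Z_f)^{-2s}\,\frac{|f|^{-2(1-\beta)}}{(\log f^2)^2}\,\le\,C.
\]
Now (\ref{eq:Loja_1}) in the form $d(\cdot,Z_f)\le C|f|^{1/\alpha}$ (for $\alpha>\alpha_0(f)$) gives $d(\cdot,Z_f)^{-2s}\ge c|f|^{-2s/\alpha}$, whence $|f|^{-2(1-\beta)-2s/\alpha}/(\log f^2)^2$ is locally integrable; absorbing the log factor as in the first step shows $c_0(f)\ge 1-\beta+s/\alpha-\delta$ for every $\delta>0$. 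Optimizing over $s\uparrow(d_0(f)-2)/2$, $\beta\downarrow\beta_0(f)$, $\alpha\downarrow\alpha_0(f)$ yields the claimed second bound.

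The only delicate point is the rigorous justification that the singular $\phi=\chi\,d(\cdot,Z_f)^{-s}$ belongs to $W^{1,2}_0(U)$ so that Proposition~\ref{prop:PoincareIneq} applies to it; this rests on the local $L^1$-integrability of $d(\cdot,Z_f)^{-\gamma}$ for $\gamma<d_0(f)$, a consequence of the real-analytic definition of $d_0(f)$ as the codimension of $Z_f$ at $0$, combined with a routine truncation and dominated-convergence argument. Everything else is a clean book-keeping of exponents.
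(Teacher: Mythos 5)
Your argument follows the paper's proof essentially step for step: the same inequality (\ref{eq:PoincareIneq_Log}) applied to $\psi=\log f^2$, the same absorption of the logarithm to get $c_0(f)\ge 1-\beta_0(f)$, the same singular test function $\chi\,\delta_{Z_f}^{-s}$ (the paper writes $\tau/2$ for your $s$), the same use of (\ref{eq:Loja_1}) to convert $\delta_{Z_f}^{-2s}$ into $|f|^{-2s/\alpha}$, and the same limiting procedure $s\uparrow (d_0(f)-2)/2$, $\beta\downarrow\beta_0(f)$, $\alpha\downarrow\alpha_0(f)$. The truncation $\phi_k=\chi\min(\delta_{Z_f}^{-s},k)$ to place the test function in $W^{1,2}_0$ is a harmless repackaging of the paper's direct verification that $\int\phi^2$ and $\int|\nabla\phi|^2$ are finite.

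The one genuine gap is the point you yourself flag as ``the only delicate point'': the claim that $d(\cdot,Z_f)^{-\gamma}\in L^1_{\rm loc}$ near $0$ for every $\gamma<d_0(f)$ is \emph{not} a consequence of ``the real-analytic definition of $d_0(f)$ as the codimension of $Z_f$ at $0$.'' That integrability is equivalent (by the dyadic decomposition you would use anyway) to a tube-volume bound of the form ${\rm vol}\,(U'\cap\{\delta_{Z_f}<\varepsilon\})\le C\varepsilon^{d_0(f)}$, and for a general closed set knowledge of the Hausdorff codimension does not give such a Minkowski-content estimate (tube volumes can decay arbitrarily slowly for sets of any given Hausdorff dimension). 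For real-analytic (more generally subanalytic) sets this bound is a nontrivial theorem, and it is exactly the external input the paper supplies by citing Loeser's result on volumes of tubes around singularities. Once that estimate is invoked, the rest of your exponent bookkeeping is correct and coincides with the paper's argument.
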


\begin{proof}
 Applying ({\ref{eq:PoincareIneq_Log}}) with $\psi=\log f^2$ and $\phi\in C^\infty_0(U)$ with $\phi=1$ in a smaller neighborhood $U'$ of $0$, we obtain
$$
\int_{U'}\frac{|\nabla f|^2}{f^2\log^2 f^2}< \infty.
$$
It follows that for any $\beta>\beta_0(f)$,
$
|f|^{\beta-1}/\log f^2
$
is $L^2$ in suitable neighborhood of $0$,
which implies $c_0(f)\ge 1-\beta_0(f)$.

 Next we set $\delta_{Z_f}:={\rm dist\,}(\cdot,Z_f)$.
  Note that
 $$
 {\rm vol\,}( U'\cap \{\delta_{Z_f}<\varepsilon\})\le {\rm const.}\varepsilon^{d_0(f)},\ \ \  \forall\,\varepsilon>0,
 $$
 holds for some small neighborhood $U'$ of $0$ (see e.g. \cite{Loeser}, Theorem 1.1 and the subsequent remarks). Set $\phi:=\chi\delta_{Z_f}^{-\tau/2}$ for some $\tau>0$, where $\chi\in C_0^\infty(U')$ satisfies $\chi=1$ in a smaller neighborhood $U''$ of $0$. We have
    \begin{eqnarray*}
  \int \phi^2 & \le &    \sum_{k=1}^\infty \int_{U'\cap \{2^{-k-1}<\delta_{Z_f}\le 2^{-k}\}} \delta_{Z_f}^{-\tau}\\
    & \le &  \sum_{k=1}^\infty 2^{\tau (k+1)} {\rm vol\,}(U'\cap \{\delta_{Z_f}\le 2^{-k}\})\\
  & \le & {\rm const.} \sum_{k=1}^\infty 2^{(\tau-d_0(f))k}\\
  & < & \infty
 \end{eqnarray*}
 provided $\tau<d_0(f)$, and similarly,
$$
  \int |\nabla \phi|^2  \le  {\rm const.} \int_{U'} \delta_{Z_f}^{-\tau-2}
    \le  {\rm const.} \sum_{k=1}^\infty 2^{(\tau+2-d_0(f))k}
   <  \infty
$$
 provided $\tau<d_0(f)-2$.

 Applying (\ref{eq:PoincareIneq_Log}) with $\psi=\log f^2$ and $\phi$ as above, we conclude that if $\tau<d_0(f)-2$ then
  $$
\int_{U''} \frac{|\nabla f|^2 \delta_{Z_f}^{-\tau}}{f^2\log^2 f^2} <\infty.
$$
 Let $\alpha>\alpha_0(f)$ and $\beta>\beta_0(f)$. It follows  that $
|f|^{\beta-1-\frac{\tau}{2\alpha}}/\log f^2
$
is $L^2$ in some neighborhood of $0$, so that
$$
c_0(f)\ge 1-\beta+\frac{\tau}{2\alpha}\rightarrow 1-\beta_0(f)+\frac{d_0(f)-2}{2\alpha_0(f)}
$$
as $\alpha\rightarrow \alpha_0(f)$, $\beta\rightarrow \beta_0(f)$ and $\tau\rightarrow d_0(f)-2$.
\end{proof}

2. Next we give an application of Hardy-Sobolev type inequalities to the strong unique continuation (SUC) property for Schr\"odinger operators. Carleman first proved the following

\begin{theorem}[cf. \cite{Carleman33}, \cite{Carleman}]\label{th:Carleman}
Let $U$ be a domain in ${\mathbb R}^2$ with $0\in U$. Suppose $u\in W^{2,2}_{\rm loc}(U)$ satisfies
 $$
 |\Delta u| =O(|u|+|\nabla u|).
 $$
Then $u$ vanishes identically if it has a zero of infinite order at\/ $0$, i.e.
$$
\lim_{\varepsilon\rightarrow 0} \varepsilon^{-N}\int_{B_\varepsilon} |u|^2=0,\ \ \ \forall\,N\in {\mathbb Z}^+
$$
where $B_\varepsilon=\{x:|x|<\varepsilon\}$.
\end{theorem}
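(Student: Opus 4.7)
The plan is the classical Carleman-estimate route. It suffices to show the local conclusion: if $u$ has a zero of infinite order at $0$, then $u\equiv 0$ in some neighborhood of $0$. Once this is done, the set $Z\subset U$ of points at which $u$ vanishes to infinite order is open by the local statement and closed by its defining condition, so by connectedness $Z=U$ and hence $u\equiv 0$.

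The main ingredient is a Carleman-type estimate of the form
\begin{equation*}
\tau^{3}\int|x|^{-2\tau-2}|v|^{2}\,dx+\tau\int|x|^{-2\tau}|\nabla v|^{2}\,dx\ \le\ C\int|x|^{-2\tau+2}|\Delta v|^{2}\,dx
\end{equation*}
for all $v\in C_{0}^{\infty}(B_{R}\setminus\{0\})$ and all sufficiently large $\tau$ (avoiding half-integer values), with $C$ independent of $\tau$. In the spirit of this paper, such an estimate is naturally connected to the Hardy-type inequality (\ref{eq:PoincareIneq_Log}) applied with the negative subharmonic weight $\psi=\log|x|$ on the punctured unit disc, which supplies the Hardy factor $1/(|x|^{2}\log^{2}|x|)$. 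To extract the stated Carleman inequality I would pass to polar coordinates $t=\log|x|$, $\omega\in S^{1}$, in which the Laplacian becomes $e^{-2t}(\partial_{t}^{2}+\Delta_{S^{1}})$; conjugating by $e^{\tau t}$ reduces the matter to a one-dimensional exponentially weighted $L^{2}$ estimate that follows from integration by parts plus the spectral gap of $\partial_{t}^{2}+\Delta_{S^{1}}-\tau^{2}$ away from $\tau^{2}\in\mathbb{Z}^{2}$.

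To exploit the estimate, fix a small $R$ with $B_{2R}\subset U$ and apply it to $v=\eta u$, where $\eta\in C_{0}^{\infty}(B_{R}\setminus\overline{B_{\varepsilon}})$ equals $1$ on $B_{R/2}\setminus B_{2\varepsilon}$; a standard mollification reduces the case $u\in W^{2,2}_{\rm loc}$ to test functions in $C^{\infty}$. Expanding $\Delta(\eta u)=\eta\,\Delta u+2\nabla\eta\cdot\nabla u+u\,\Delta\eta$ and inserting the hypothesis $|\Delta u|\le C_{0}(|u|+|\nabla u|)$ gives
\begin{equation*}
\int|x|^{-2\tau+2}|\Delta(\eta u)|^{2}\ \le\ C_{1}\int_{\{\eta=1\}}|x|^{-2\tau+2}(|u|^{2}+|\nabla u|^{2})\ +\ E(\eta,R,\varepsilon),
\end{equation*}
where $E$ collects the contributions supported on $\{\nabla\eta\ne 0\}$. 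Since $|x|^{-2\tau+2}\le R^{2}|x|^{-2\tau}\le R^{4}|x|^{-2\tau-2}$ on $B_{R}$, for $\tau$ sufficiently large the first integral on the right can be absorbed into the left-hand side of the Carleman estimate; the infinite-order vanishing of $u$ at $0$ drives the inner boundary piece of $E$ (supported on $|x|\sim\varepsilon$) to zero as $\varepsilon\downarrow 0$, leaving only the outer annular contribution
\begin{equation*}
\tau^{3}\int_{B_{R/2}}|x|^{-2\tau-2}|u|^{2}\,dx\ \le\ C R^{-2\tau}\|u\|_{H^{1}(B_{R}\setminus B_{R/2})}^{2}.
\end{equation*}

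Finally, if $u\not\equiv 0$ on $B_{R/2}$ one may pick a ball $B(x_{1},\rho/4)\subset B_{R/2}$ with $|x_{1}|=\rho<R/2$ on which $|u|\ge c_{0}>0$; the left-hand side above is then bounded below by $c\tau^{3}(5\rho/4)^{-2\tau-2}$, and because $5\rho/4<R$ the inequality breaks down as $\tau\to\infty$. This contradiction yields $u\equiv 0$ on $B_{R/2}$ and completes the proof. The hardest part is producing the Carleman inequality with the stated dependence on $\tau$; once it is available the remainder is absorption, mollification, and the standard infinite-order-vanishing limit.
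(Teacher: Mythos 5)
Your overall scheme (cut off near the origin and near $|x|=R$, apply a weighted estimate to $\eta u$, absorb the terms coming from $|\Delta u|\le C_0(|u|+|\nabla u|)$, let $\varepsilon\downarrow 0$ using the infinite-order vanishing, then let $\tau\to\infty$) is the classical Carleman argument, and it is the same scheme the paper uses for its Theorem \ref{th:SUC_Laplace} (the paper does not reprove Theorem \ref{th:Carleman} itself, it cites Carleman). The genuine gap is your central lemma: the inequality $\tau^{3}\int|x|^{-2\tau-2}|v|^{2}+\tau\int|x|^{-2\tau}|\nabla v|^{2}\le C\int|x|^{-2\tau+2}|\Delta v|^{2}$ is false for the pure power weight $|x|^{-\tau}$. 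In your own cylinder reduction ($t=\log|x|$, conjugation by $e^{\tau t}$) the estimate becomes $\tau^{3}\|\phi\|^{2}+\tau\|\nabla_{t,\theta}\phi\|^{2}\le C\|((\partial_t+\tau)^{2}+\partial_\theta^{2})\phi\|^{2}$, and testing with $\phi=g(t)e^{ik\theta}$, where $k$ is the integer nearest to $\tau$ (so $|\tau-k|\simeq\tfrac12$) and $\widehat g$ is concentrated near $\xi=0$, gives $\|((\partial_t+\tau)^{2}+\partial_\theta^{2})\phi\|^{2}\lesssim\tau^{2}\|\phi\|^{2}$ while your left side is $\gtrsim\tau^{3}\|\phi\|^{2}$ (and $\tau\|\partial_\theta\phi\|^{2}\simeq\tau^{3}\|\phi\|^{2}$); the support restriction to $B_R\setminus\{0\}$ does not exclude such test functions since one may push them toward $t\to-\infty$. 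The spectral-gap computation you invoke yields exactly the weaker, correct bounds, namely (\ref{eq:Carleman_Known_1}) and (\ref{eq:Carleman_Known_2}): a gain of $\tau$ (not $\tau^{3/2}$) on the zeroth-order term and no $\tau$-gain at all on the gradient term. A gain of $\tau^{3}$ requires a strictly convexified weight (e.g.\ $e^{\tau(\log 1/|x|)^{2}}$), which you have not introduced; so as written the "hardest part" of your proof is not merely unproved but wrong.

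The argument is repairable without changing its architecture, because the strong gains are not needed: with (\ref{eq:Carleman_Known_1})--(\ref{eq:Carleman_Known_2}) one absorbs $C_0\||x|^{2-\tau}u\|_2$ using the factor $\tau^{-1}$ (or $R^{2}$) and absorbs $C_0\||x|^{2-\tau}\nabla u\|_2\le C_0R\,\||x|^{1-\tau}\nabla u\|_2$ by choosing $R$ small enough that $C_0C_2R<\tfrac12$ --- absorption by smallness of the ball, not largeness of $\tau$; this is exactly how the paper's proof of Theorem \ref{th:SUC_Laplace} is organized, including the inner cutoff and the $\tau\to\infty$ step. Two further points need attention. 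First, the inner error terms involve $\varepsilon^{-2}\int_{B_\varepsilon\setminus B_{\varepsilon/2}}|\nabla u|^{2}$, and infinite-order vanishing is assumed only for $\int_{B_\varepsilon}|u|^{2}$; you must convert one into the other by a Caccioppoli inequality (the paper does this via (\ref{eq:CacciopoliIneq})). Second, your globalization is too quick: for the $L^{2}$-average definition the set of infinite-order zeros is not closed "by its defining condition"; the standard fix is that the local statement kills $u$ on a ball whose radius is comparable to the distance to $\partial U$, which makes the set of points near which $u\equiv 0$ both open and closed in $U$.
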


In order to prove this result, Carleman introduced a method, the so-called Carleman estimates, which is essential in almost all the subsequent work on the subject.  Theorem \ref{th:Carleman}  was extended to elliptic equations of many independent variables by various authors (see \cite{HormanderPDE}, \cite{Wolff} and the references therein).
 The following $L^2$ Carleman estimates turn out to be of particular importance:
    \begin{equation}\label{eq:Carleman_Known_1}
  \left\||x|^{-\tau} \phi\right\|_2\le C_1 \tau^{-1}\left\||x|^{2-\tau} \Delta \phi\right\|_2
  \end{equation}
   \begin{equation}\label{eq:Carleman_Known_2}
  \left\||x|^{1-\tau} \nabla \phi\right\|_2\le C_2 \left\||x|^{2-\tau} \Delta \phi\right\|_2
  \end{equation}
  for all $\phi\in C^\infty_0({\mathbb R}^n\backslash \{0\})$ and $\tau\ge n$ with ${\rm dist\,}(\tau-n/2,{\mathbb Z})>0$ (see e.g. \cite{PanWolff}).
  It may be of some interest that (\ref{eq:Carleman_Known_2}) is essentially a formal consequence of (\ref{eq:Carleman_Known_1}).  If one chooses $\eta(t)=t$ in $(\ref{eq:Laplace+})$, then
 \begin{equation}\label{eq:CacciopoliIneq}
\int_{{\mathbb R}^n} \phi^2  {|\nabla\psi|^2} \le 4 \int_{{\mathbb R}^n} \psi^2 |\nabla\phi|^2  - 2\int_{{\mathbb R}^n}\phi^2 \psi \Delta\psi.
 \end{equation}
  With $\psi$ and $\phi$  replaced by $\phi$ and $|x|^{1-\tau}$ respectively in (\ref{eq:CacciopoliIneq}), we obtain
  \begin{eqnarray*}
     \left\||x|^{1-\tau} \nabla \phi\right\|_2 & \le & 4(\tau-1)^2 \left\||x|^{-\tau} \phi\right\|_2^2-2\int_{{\mathbb R}^n} |x|^{2-2\tau}\phi \Delta \phi\\
     & \le & 4(\tau-1)^2 \left\||x|^{-\tau} \phi\right\|_2^2+2\left\||x|^{-\tau} \phi\right\|_{2}\left\||x|^{2-\tau} \Delta \phi\right\|_2\\
     & \le & (4C_1^2 (\tau-1)^2\tau^{-2}+2C_1 \tau^{-1})\left\||x|^{2-\tau} \Delta \phi\right\|_2^2
    \end{eqnarray*}
    in view of (\ref{eq:Carleman_Known_1}).

  A useful consequence of (\ref{eq:Carleman_Known_1}) and (\ref{eq:Carleman_Known_2}) is given as follows. Since
  $$
  \nabla (|x|^{1-\tau} \phi)=|x|^{1-\tau} \nabla \phi+(1-\tau) \phi |x|^{-\tau} \nabla |x|,
  $$
  it follows that
  \begin{eqnarray}\label{eq:Carleman_Gradient}
  \left\|\nabla (|x|^{1-\tau} \phi) \right\|_2 & \le &  \left\||x|^{1-\tau} \nabla \phi\right\|_2+(\tau-1)\left\||x|^{-\tau} \phi\right\|_2\nonumber\\
  & \le &  C \left\||x|^{2-\tau} \Delta \phi\right\|_2.
  \end{eqnarray}

    \begin{theorem}\label{th:SUC_Laplace}
  Let $U$ be a bounded neighborhood of\/ $0\in {\mathbb R}^n$ with $n>2$. Suppose $\omega,\omega'$ are measurable and positive a.e. on $U$ such that for some $\alpha\ge 2$,
  \begin{equation}\label{eq:Hardy-Sobolev_6}
 \left[\int_{U} |\phi|^\alpha \omega\right]^{2/\alpha}\le C \int_{U} |\nabla \phi|^2,\ \ \ \forall\,\phi\in C_0^\infty(U)
 \end{equation}
        \begin{equation}\label{eq:Carleman_Condition_2}
    |x|\omega'\in L^{2\alpha/(\alpha-2)}_{\rm loc}(U).
    \end{equation}
         If $u\in W^{2,2}_{\rm loc}(U)$ has a zero of infinite order at\/ $0$ and satisfies
 \begin{equation}\label{eq:SUC_Laplace_0}
 |\Delta u|\le \omega^{1/\alpha}\omega' |u|,
 \end{equation}
   then $u\equiv 0$ in a neighborhood of\/ $0$.
  \end{theorem}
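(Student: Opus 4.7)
The plan is to run a Carleman estimate argument that combines the $L^2$ bounds \eqref{eq:Carleman_Known_1} and \eqref{eq:Carleman_Gradient} already developed in this appendix with the hypothesized Hardy--Sobolev inequality \eqref{eq:Hardy-Sobolev_6} and the integrability condition \eqref{eq:Carleman_Condition_2}. Fix $\rho>0$ small with $B_{2\rho}\subset\subset U$, a cutoff $\chi\in C^\infty_0(B_{2\rho})$ with $\chi\equiv 1$ on $B_\rho$, and apply the Carleman estimates to the test function $\phi=\chi u$ (modulo a secondary cutoff near $0$, discussed at the end).

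First, I would couple \eqref{eq:Carleman_Gradient} with \eqref{eq:Hardy-Sobolev_6} applied to $|x|^{1-\tau}\phi$ to obtain the weighted $L^\alpha$ Carleman bound
\[
\left[\int_U |x|^{(1-\tau)\alpha}|\phi|^\alpha\omega\right]^{2/\alpha}\leq C\||x|^{2-\tau}\Delta\phi\|_2^2.
\]
Next, H\"older's inequality with conjugate exponents $\alpha/2$ and $\alpha/(\alpha-2)$ gives
\[
\int |x|^{4-2\tau}\chi^2\omega^{2/\alpha}(\omega')^2 u^2\leq \left[\int_U |x|^{(1-\tau)\alpha}|\phi|^\alpha\omega\right]^{2/\alpha}\cdot\||x|\omega'\|_{L^{2\alpha/(\alpha-2)}(B_{2\rho})}^2.
\]
By \eqref{eq:SUC_Laplace_0} the left-hand side dominates $\||x|^{2-\tau}\chi\Delta u\|_2^2$, and the factor $\||x|\omega'\|_{L^{2\alpha/(\alpha-2)}(B_{2\rho})}$ can be made arbitrarily small by shrinking $\rho$, thanks to \eqref{eq:Carleman_Condition_2}. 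Writing $\Delta\phi=\chi\Delta u+2\nabla\chi\cdot\nabla u+u\Delta\chi$, the commutator terms live on the annulus $A=\{\rho\leq|x|\leq 2\rho\}$ where $|x|$ is bounded below; so for $\rho$ sufficiently small the $\chi\Delta u$ contribution can be absorbed into the left, leaving
\[
\||x|^{2-\tau}\Delta\phi\|_2^2\leq C\rho^{-2\tau}M,
\]
with $M=M(u,\rho,\chi)$ independent of $\tau$.

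Plugging this into \eqref{eq:Carleman_Known_1} and using $|x|^{-2\tau}\geq r^{-2\tau}$ on $B_r$ with $r<\rho$,
\[
r^{-2\tau}\int_{B_r}|u|^2\leq \||x|^{-\tau}\phi\|_2^2\leq C\tau^{-2}\||x|^{2-\tau}\Delta\phi\|_2^2\leq C'\tau^{-2}\rho^{-2\tau}M,
\]
hence $\int_{B_r}|u|^2\leq C'M\tau^{-2}(r/\rho)^{2\tau}\to 0$ as $\tau\to\infty$ (along admissible values) whenever $r<\rho$. This forces $u\equiv 0$ on $B_r$ and proves the theorem.

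The main obstacle is the justification of applying \eqref{eq:Carleman_Known_1} and \eqref{eq:Carleman_Gradient} to $\phi=\chi u$, since those estimates are stated for test functions supported away from $0$. To remedy this I would insert a secondary cutoff $\chi_\varepsilon$ vanishing on $B_\varepsilon$ and equal to $1$ outside $B_{2\varepsilon}$, run the whole chain above with $\chi_\varepsilon\phi$ in place of $\phi$, and pass to the limit $\varepsilon\to 0$. The commutator terms created by $\chi_\varepsilon$ are supported on $\{\varepsilon\leq|x|\leq 2\varepsilon\}$ and carry a factor of order $\varepsilon^{-2\tau}$ times the $L^2$ norms of $u$ and $\nabla u$ on that shell. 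The zero-of-infinite-order hypothesis gives $\int_{B_\varepsilon}|u|^2=O(\varepsilon^N)$ for every $N$, and a Caccioppoli-type bound derived from \eqref{eq:SUC_Laplace_0} together with $W^{2,2}_{\rm loc}$ regularity upgrades this to $\int_{B_\varepsilon}|\nabla u|^2=O(\varepsilon^N)$; these contributions therefore vanish in the limit $\varepsilon\to 0$ for each fixed $\tau$, which legitimizes the whole argument.
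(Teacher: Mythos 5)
Your proposal is correct and follows essentially the same route as the paper's proof: you combine \eqref{eq:Carleman_Gradient} with the hypothesis \eqref{eq:Hardy-Sobolev_6} to get a weighted $L^\alpha$ Carleman inequality, absorb the potential term through H\"older's inequality and the smallness of $\||x|\omega'\|_{L^{2\alpha/(\alpha-2)}}$ on small balls, and remove the cutoff at the origin using the infinite-order vanishing of $u$ together with a Caccioppoli bound for $\nabla u$, exactly as in the text. The only (cosmetic) difference is the endgame: you let $\tau\to\infty$ in the $L^2$ estimate \eqref{eq:Carleman_Known_1}, comparing the weights $r^{-\tau}$ and $\rho^{-\tau}$, whereas the paper lets $\tau\to\infty$ directly in the weighted $L^\alpha$ bound, using that $\|\omega^{1/\alpha}(2|x|/r)^{1-\tau}u\|_{L^\alpha(B_{r/2})}$ stays bounded in $\tau$ while the weight blows up on $B_{r/2}$ and $\omega>0$ a.e.
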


   \begin{proof}
  Consider a ball $B_{r_0}\subset\subset U$.  By  (\ref{eq:Carleman_Gradient}) and (\ref{eq:Hardy-Sobolev_6}), we have
    \begin{equation}\label{eq:Carleman_Laplace_1}
  \|\omega^{1/\alpha}|x|^{1-\tau} \phi\|_\alpha
  \le  C \left\||x|^{2-\tau} \Delta \phi\right\|_2
  \end{equation}
  for all $\phi\in C^\infty_0(B_{r_0}\backslash \{0\})$.
  Clearly, the same inequality holds for all $\phi\in W^{2,2}_0(B_{r_0}\backslash \overline{B}_\varepsilon)$ where $\varepsilon< r_0$.
  Now we follow the classical argument of Carleman. Fix $r<r_0$ for a moment. For $\varepsilon<r/2$, we choose $\chi_\varepsilon\in C^\infty_0(B_{r})$ such that
  \begin{enumerate}
  \item $\chi_\varepsilon=0$ on $B_{\varepsilon/2}$;
  \item $|\nabla \chi_\varepsilon|\le C\varepsilon^{-1}$, $|\Delta \chi_\varepsilon|\le C \varepsilon^{-2}$ on $B_\varepsilon\backslash B_{\varepsilon/2}$;
  \item $
  \chi_\varepsilon=1
  $
  on ${B_{r/2}\backslash B_{\varepsilon}}$;
\item $|\nabla \chi_\varepsilon|\le C$, $|\Delta \chi_\varepsilon|\le C $ on $B_{r}\backslash B_{r/2}$.
  \end{enumerate}
  A computation gives  $\Delta (\chi_\varepsilon u)=u\Delta\chi_\varepsilon+2\nabla \chi_\varepsilon\cdot \nabla u+\chi_\varepsilon \Delta u$.
  Substituting $\phi=\chi_\varepsilon u$ in to (\ref{eq:Carleman_Laplace_1}), we have
  \begin{eqnarray}\label{eq:Carleman_Laplace_3}
   \|\omega^{1/\alpha}|x|^{1-\tau} u\|_{L^\alpha({B_{r/2}\backslash B_{\varepsilon}})}   & \le & C\varepsilon^{-2} \||x|^{2-\tau} u\|_{L^2({B_{\varepsilon}\backslash B_{\varepsilon/2}})}+ C\varepsilon^{-1}  \||x|^{2-\tau} \nabla u\|_{L^2({B_{\varepsilon}\backslash B_{\varepsilon/2}})} \nonumber\\
   && + C  \||x|^{2-\tau} \Delta u\|_{L^2({B_{r/2}\backslash B_{\varepsilon/2}})}\nonumber\\
   &&  + C \||x|^{2-\tau} (|u|+|\nabla u|+|\Delta u|)\|_{L^2(B_{r}\backslash B_{r/2})}.
  \end{eqnarray}
  Since $u$  has a zero of infinite order at $0$, it follows that for $\tau\ge 2$
  $$
  \int_{B_\varepsilon\backslash B_{\varepsilon/2}} \frac{u^2}{|x|^{2\tau-4}}\le (\varepsilon/2)^{4-2\tau}\int_{B_\varepsilon\backslash B_{\varepsilon/2}} {u^2}=o(\varepsilon^4)
  $$
  and
  $$
  \int_{B_\varepsilon\backslash B_{\varepsilon/2}}  \frac{|\nabla u|^2}{|x|^{2\tau-4}}  \le  (\varepsilon/2)^{4-2\tau}\int_{B_\varepsilon\backslash B_{\varepsilon/2}} {|\nabla u|^2}\\
     =  o(\varepsilon^2)
 $$
  since
  $$
  \int_{B_\varepsilon\backslash B_{\varepsilon/2}} {|\nabla u|^2}\le C\varepsilon^{-2}  \int_{B_{2\varepsilon}} u^2 +2\|u\|_{L^2(B_{2\varepsilon})}\|\Delta u\|_{L^2(B_{2\varepsilon})}
  $$
  in view of (\ref{eq:CacciopoliIneq}). We also have
  \begin{eqnarray*}
  \||x|^{2-\tau} \Delta u\|_{L^2({B_{r/2}\backslash B_{\varepsilon/2}})} & \le &  \|\omega^{1/\alpha}\omega' |x|^{2-\tau}  u\|_{L^2({B_{r/2}\backslash B_{\varepsilon/2}})}\\
  & \le &   \|\omega'|x|\|_{L^{\frac{2\alpha}{(\alpha-2)}}({B_{r/2}\backslash B_{\varepsilon/2}})}\|\omega^{1/\alpha}|x|^{1-\tau} u\|_{L^\alpha({B_{r/2}\backslash B_{\varepsilon/2}})}
   \end{eqnarray*}
   in view of H\"older's inequality.
  Letting $r\ll1$ and $\varepsilon\rightarrow 0$ in (\ref{eq:Carleman_Laplace_3}), we obtain
  \begin{eqnarray*}
  \|\omega^{1/\alpha}|x|^{1-\tau} u\|_{L^\alpha(B_{r/2})} \le C(r/2)^{2-\tau} \|u\|_{W^{2,2}(B_r)},
  \end{eqnarray*}
  so that $ \| \omega^{1/\alpha} (2|x|/r)^{1-\tau} u\|_{L^\alpha(B_{r/2})}$ is uniformly bounded in $\tau$. Thus $u\equiv 0$ in $B_{r/2}$.
    \end{proof}

\end{document}